\documentclass[10pt, a4]{amsart}

\usepackage{lmodern}
\usepackage[utf8]{inputenc}
\usepackage{amsmath}
\usepackage{graphicx}
\usepackage{amssymb}
\usepackage{esint}
\usepackage[dvipsnames]{xcolor}
\usepackage{tikz}
\usepackage{xxcolor}
\usepackage{floatrow}
\usepackage{color}
\usepackage{amsthm}
\usepackage{epsfig}
\usepackage[english]{babel}
\usepackage{hyperref}
\usepackage[normalem]{ulem}
\usepackage{mathrsfs}
\usepackage{tikz}
\usetikzlibrary{decorations.markings,backgrounds}
\usetikzlibrary{arrows.meta}
\usepackage{pgfplots}
\usepackage{subfigure}
\usepackage{caption}
\usepackage{bbm}

\usepackage{stmaryrd}

\usepackage{float}
\usepackage{pst-plot}

\hypersetup{
	colorlinks,
	linkcolor={red!80!black},
	citecolor={blue!50!black},
	urlcolor={blue!80!black}
}
 \usepackage{diagbox}

\sloppy 

\usepackage[a4paper,top=3.5 cm,bottom=3 cm,left=2.5 cm,right=2.5 cm]{geometry}

% \newcounter{prop}

\theoremstyle{plain}
\newtheorem{lemma}{Lemma}[section]
\newtheorem{proposition}[lemma]{Proposition}
\newtheorem{theorem}{Theorem}[section]

\theoremstyle{definition}

\theoremstyle{remark}
\newtheorem{remark}{Remark}[section]

\numberwithin{equation}{section}

\def\to{\rightarrow}

%\newcommand{\Id}{\mathrm{id}}

 % %for the slope

%\newcommand{\see}{{\mbox{\scriptsize\boldmath$e$}}}

%%
%%

%%

%%
%%
%%

%\newcommand{\sfF}{{\mbox{\scriptsize\boldmath$F$}}}

%%
%%
%%
%%
%%                          GRECI
%%

%%
%%

                     % gradiente ottenuto per rilassamento

\usepackage{dsfont}

%%
%%
%%
%%

%%
%%
%%
%% SANS SERIF NORMAL
%%
%%

%\newcommand{\sff}{{\sf f}}

%%
%%

%\newcommand{\frf}{{\frak f}}

%%
%%
%%

%\newcommand{\rmf}{{\mathrm f}}

%%

%\newcommand{\la}{\bigl\langle}

%\renewcommand{\tilde}{\widetilde}

   %\Gamma(a,b)
          %\Gamma (a)

%\newcommand{\bra}[1]{\left( #1 \right)}

        %divergence

     % Misura di Hausdorff
%\newcommand{\Leb}[1]{{\mathscr L}^{#1}}      % Misura di Lebesgue

\newcommand{\nchi}{{\raise.3ex\hbox{$\chi$}}}

\def\XXint#1#2#3{{\setbox0=\hbox{$#1{#2#3}{\int}$ }
		\vcen{\hbox{$#2#3$ }}\kern-.6\wd0}}

%% LETTERS SHORTCUTS
\newcommand{\cA}{\mathcal{A}}

\newcommand{\cI}{\mathcal{I}}

      % Misura di Lebesgue
%\renewcommand{\Leb}{{\mathscr L}}

%\newcommand{\A}{\mathbb{A}}

%\newcommand{\C}{\mathbb{C}}

%\newcommand{\E}{\mathbb{E}}
%\newcommand{\F}{\mathbb{F}}
%\newcommand{\G}{\mathbb{G}}
%\renewcommand{\H}{\mathbb{H}} 
%\newcommand{\I}{\mathbb{I}}
%\newcommand{\J}{\mathbb{J}}
%\newcommand{\K}{\mathbb{K}}
%\renewcommand{\L}{\mathscr{L}}

%\newcommand{\N}{\mathbb{N}}
%\renewcommand{\O}{\mathbb{O}}
%\renewcommand{\P}{\mathbb{P}}
%\newcommand{\Q}{\mathbb{Q}}
%\newcommand{\R}{\mathbb{R}}
%\renewcommand{\S}{\mathbb{S}}
%\newcommand{\T}{\mathbb{T}}
%\newcommand{\U}{\mathbb{U}}
%\newcommand{\V}{\mathbb{V}}
%\newcommand{\W}{\mathbb{W}}
%\newcommand{\X}{\mathbb{X}}
%\newcommand{\Y}{\mathbb{Y}}
%\newcommand{\Z}{\mathbb{Z}}

%%

\usepackage{mathtools}

%% SPAZI DI MISURE DI PROBABILITA'

          % misure di probabilita'
          % misure di probabilita'
          % misure di probabilita'
          % misure di probabilita'
     % misure di
% probabilita' con momento quadratico finito
     % misure di probabilita' con momento quadratico finito

%\newcommand{\dd}{{\mbox{\boldmath$d$}}}

%%
%%

%% TWEAKS
\makeatletter
\newcommand{\justified}{%
	\rightskip\z@skip%
	\leftskip\z@skip}
\makeatother
\usepackage{amsfonts}
\usepackage{aurical}

%% COMMANDS & SHORTCUTS

\newcommand\restr[2]{{% we make the whole thing an ordinary symbol
		\left.\kern-\nulldelimiterspace % automatically resize the bar with \right
		#1 % the function
		\vphantom{\big|} % pretend it's a little taller at normal size
		\right|_{#2} % this is the delimiter
}}

\newcommand{\var}{\varepsilon}

\DeclareFontFamily{U}{mathx}{\hyphenchar\font45}
\DeclareFontShape{U}{mathx}{m}{n}{<-> mathx10}{}
\DeclareSymbolFont{mathx}{U}{mathx}{m}{n}
\DeclareMathAccent{\widebar}{0}{mathx}{"73}

\renewcommand{\i}{\ifmmode\mathit{\mathchar"7010 }\else\char"10 \fi}
\renewcommand{\j}{\ifmmode\mathit{\mathchar"7011 }\else\char"11 \fi}

%\newcommand{\dist}{\hbox{dist}}
%\newcommand{\Ds}{{\left(-\lapl\right)}^s}

%\newcommand{\di}{\hbox{div}}

% commands in spectral paper

%\newcommand{\rest}{{\left(\left.-\lapl\right|_\Omega\right)}}

%\renewcommand\div{{\mathrm{div}}}

\def\char{{1\!\mbox{\rm l}}}

\usepackage{dsfont}
%\renewcommand{\1}{\mathds{1}}

%% COLORS
\definecolor{orange}{rgb}{1,.549,0}
\definecolor{GreenYellow }{rgb}{ 0.15,   0.69, 0}
\definecolor{Yellowone}{rgb}{ 0, 1., 0} \definecolor{Goldenrod }{rgb}{  0, 0.10, 0.84}
\definecolor{Dandelion }{rgb}{ 0, 0.29, 0.84} 
\definecolor{Apricot }{rgb}{ 0, 0.32, 0.52}
\definecolor{Peach }{rgb}{ 0, 0.50, 0.70} 
\definecolor{GreenYellow}{cmyk}{0.15,0,0.69,0}
\definecolor{RoyalPurple}{cmyk}{0.75,0.90,0,0}
\definecolor{Yellow}{cmyk}{0,0,1,0}
\definecolor{BlueViolet}{cmyk}{0.86,0.91,0,0.04}
\definecolor{Goldenrod}{cmyk}{0,0.10,0.84,0}
\definecolor{Periwinkle}{cmyk}{0.57,0.55,0,0}
\definecolor{Dandelion}{cmyk}{0,0.29,0.84,0}
\definecolor{CadetBlue}{cmyk}{0.62,0.57,0.23,0}
\definecolor{Apricot}{cmyk}{0,0.32,0.52,0}
\definecolor{CornflowerBlue}{cmyk}{0.65,0.13,0,0}
\definecolor{Peach}{cmyk}{0,0.50,0.70,0}
\definecolor{MidnightBlue}{cmyk}{0.98,0.13,0,0.43}
\definecolor{Melon}{cmyk}{0,0.46,0.5,0}
\definecolor{NavyBlue}{cmyk}{0.94,0.54,0,0}
\definecolor{YellowOrange}{cmyk}{0,0.42,1,0}
\definecolor{RoyalBlue}{cmyk}{1,0.50,0,0}
\definecolor{Orange}{cmyk}{0,0.61,0.87,0}
\definecolor{Blue}{cmyk}{1,1,0,0}
\definecolor{BurntOrange}{cmyk}{0,0.51,1,0}
\definecolor{Cerulean}{cmyk}{0.94,0.11,0,0}
\definecolor{Bittersweet}{cmyk}{0,0.75,1,0.24}
\definecolor{Cyan}{cmyk}{1,0,0,0}
\definecolor{RedOrange}{cmyk}{0,0.77,0.87,0}
\definecolor{ProcessBlue}{cmyk}{0.96,0,0,0}
\definecolor{Mahogany}{cmyk}{0,0.85,0.87,0.35}
\definecolor{SkyBlue}{cmyk}{0.62,0,0.12,0}
\definecolor{Maroon}{cmyk}{0,0.87,0.68,0.32}
\definecolor{Turquoise}{cmyk}{0.85,0,0.20,0}
\definecolor{BrickRed}{cmyk}{0,0.89,0.94,0.28}
\definecolor{TealBlue}{cmyk}{0.86,0,0.34,0.02}
\definecolor{Red}{cmyk}{0,1,1,0}
\definecolor{Aquamarine}{cmyk}{0.82,0,0.30,0}
\definecolor{OrangeRed}{cmyk}{0,1,0.50,0}
\definecolor{BlueGreen}{cmyk}{0.85,0,0.33,0}
\definecolor{RubineRed}{cmyk}{0,1,0.13,0}
\definecolor{Emerald}{cmyk}{1,0,0.50,0}
\definecolor{WildStrawberry}{cmyk}{0,0.96,0.39,0}
\definecolor{JungleGreen}{cmyk}{0.99,0,0.52,0}
\definecolor{Salmon}{cmyk}{0,0.53,0.38,0}
\definecolor{SeaGreen}{cmyk}{0.69,0,0.50,0}
\definecolor{CarnationPink}{cmyk}{0,0.63,0,0}
\definecolor{Green}{cmyk}{1,0,1,0}
\definecolor{Magenta}{cmyk}{0,1,0,0}
\definecolor{ForestGreen}{cmyk}{0.91,0,0.88,0.12}
\definecolor{VioletRed}{cmyk}{0,0.81,0,0}
\definecolor{PineGreen}{cmyk}{0.92,0,0.59,0.25}
\definecolor{Rhodamine}{cmyk}{0,0.82,0,0}
\definecolor{LimeGreen}{cmyk}{0.50,0,1,0}
\definecolor{Mulberry}{cmyk}{0.34,0.90,0,0.02}
\definecolor{YellowGreen}{cmyk}{0.44,0,0.74,0}
\definecolor{RedViolet}{cmyk}{0.07,0.90,0,0.34}
\definecolor{SpringGreen}{cmyk}{0.26,0,0.76,0}
\definecolor{Fuchsia}{cmyk}{0.47,0.91,0,0.08}
\definecolor{OliveGreen}{cmyk}{0.64,0,0.95,0.40}
\definecolor{Lavender}{cmyk}{0,0.48,0,0}
\definecolor{RawSienna}{cmyk}{0,0.72,1,0.45}
\definecolor{Thistle}{cmyk}{0.12,0.59,0,0}
\definecolor{Sepia}{cmyk}{0,0.83,1,0.70}
\definecolor{Orchid}{cmyk}{0.32,0.64,0,0}
\definecolor{Brown}{cmyk}{0,0.81,1,0.60}
\definecolor{DarkOrchid}{cmyk}{0.40,0.80,0.20,0}
\definecolor{Tan}{cmyk}{0.14,0.42,0.56,0}
\definecolor{Purple}{cmyk}{0.45,0.86,0,0}
\definecolor{Gray}{cmyk}{0,0,0,0.50}
\definecolor{Plum}{cmyk}{0.50,1,0,0}
\definecolor{Black}{cmyk}{0,0,0,1}
\definecolor{Violet}{cmyk}{0.79,0.88,0,0}
\definecolor{White}{cmyk}{0,0,0,0}
\definecolor{rltred}{rgb}{0.75,0,0}
\definecolor{rltgreen}{rgb}{0,0.5,0}
\definecolor{oneblue}{rgb}{0,0,0.75}
\definecolor{marron}{rgb}{0.64,0.16,0.16}
\definecolor{forestgreen}{rgb}{0.13,0.54,0.13}
\definecolor{purple}{rgb}{0.62,0.12,0.94}
\definecolor{dockerblue}{rgb}{0.11,0.56,0.98}
\definecolor{freeblue}{rgb}{0.25,0.41,0.88}
\definecolor{myblue}{rgb}{0,0.2,0.4}
% \definecolor{Red }{rgb}{ 0 1. 1. 0} 
\definecolor{Melon}{rgb}{ 0.46, 0.50, 0}
\definecolor{Melone}{rgb}{ 0, 0.46, 0.50}

\allowdisplaybreaks
\usepackage[colorinlistoftodos]{todonotes}
\usepackage{url}

\usepackage{graphicx,tikz} 

\title[Asymptotics for partially dissipative hyperbolic systems]{Large time asymptotics for partially dissipative hyperbolic systems without Fourier analysis: application to the nonlinearly damped p-system}

\author[T. Crin-Barat]{Timothée Crin-Barat${^*}$}
\address[T. Crin-Barat]{Chair for Dynamics, Control, Machine Learning and Numerics, Alexander Von Humboldt- Professorship, Department of
Mathematics, Friedrich-Alexander-Universität Erlangen-Nürnberg, 91058 Erlangen, Germany.}
\email{timothee.crin-barat@fau.de}

\author[L.-Y. Shou]{Ling-Yun Shou}
\address[L.-Y. Shou]{School of Mathematics and Key Laboratory of Mathematical MIIT, Nanjing University of Aeronautics and Astronautics, Nanjing, 211106,
P. R. China}
\email{shoulingyun11@gmail.com}

\author[E. Zuazua]{Enrique Zuazua}
\address[E. Zuazua]{Chair for Dynamics, Control, Machine Learning and Numerics, Alexander Von Humboldt- Professorship, Department of
Mathematics, Friedrich-Alexander-Universität Erlangen-Nürnberg, 91058 Erlangen, Germany.
	\newline \indent 
	Chair of Computational Mathematics, Fundación Deusto,	Avenida de las Universidades, 24, 48007 Bilbao, Basque Country, Spain. 
	\newline \indent
	Universidad Autónoma de Madrid, Departamento de Matemáticas, Ciudad Universitaria de Cantoblanco, 28049 Madrid, Spain.}
	\email{enrique.zuazua@fau.de}

\linespread{1.1}

\subjclass[201,0]{35Q35; 76N10}
\keywords{Partially dissipative hyperbolic systems, nonlinear damping, asymptotic estimates, hypocoercivity, Lyapunov functional.\\\quad$^*$ Corresponding author: timotheecrinbarat@gmail.com}

\begin{document}
\maketitle

\begin{abstract}
A new framework to obtain time-decay estimates for partially dissipative hyperbolic systems set on the real line is developed. Under the classical Shizuta-Kawashima (SK) stability condition, equivalent to the Kalman rank condition in control theory, the solutions of these systems decay exponentially in time for high frequencies and polynomially for low ones. This allows to derive a sharp description of the space-time decay of solutions for large time. However, such analysis relies heavily on the use of the Fourier transform that we avoid here, developing the ``physical space version" of the \textit{hyperbolic hypocoercivity} approach introduced by Beauchard and Zuazua in \cite{BZ}, to prove new asymptotic results in the linear and nonlinear settings.

The new physical space version of the hyperbolic hypocoercivity approach allows to recover the natural heat-like time-decay of solutions under sharp rank conditions, without employing Fourier analysis or $L^1$ assumptions on the initial data.
Taking advantage of this Fourier-free framework, we establish 
new enhanced time-decay estimates for initial data belonging to weighted Sobolev spaces.  These results are then applied to the nonlinear compressible Euler equations with linear damping. We also prove the logarithmic stability of the nonlinearly damped $p$-system.
\end{abstract}

\section{Introduction}
\label{sec:intro}

\subsection{Presentation of the model}
We study the long-time behaviour of one-dimensional partially dissipative hyperbolic systems of the form
\begin{equation}\label{SystGen}
\begin{aligned}
    &\partial_t U + A(U)\partial_xU =-BU, & (x,t)\in \mathbb{R}\times \mathbb{R}_{+},
\end{aligned}
\end{equation}
where $U=U(x,t)\in \mathbb{R}^{n}$ ($n\geq2$) is the vector-valued unknown, $A:\mathbb{R}^n\rightarrow\mathbb{R}^{n\times n}$ is a smooth matrix-valued symmetric function and $B$ is a  positive semidefinite symmetric $n\times n$ matrix.
 The system \eqref{SystGen} models non-equilibrium processes in physics for media with hyperbolic responses and also arises in the numerical simulation of conservation laws by relaxation schemes (see \cite{JinXin,vicenti1,zeng1} and references therein).

We assume that \eqref{SystGen} has a \textit{partially dissipative structure}: The matrix $B$ takes the form 
\begin{align} \label{BD}
B=\begin{pmatrix} 0 & 0 \\ 0 & D
    \end{pmatrix},
\end{align}
    with $D$ a positive definite symmetric $n_2\times n_2$  matrix $(1\leq n_{2}<n)$. 
    Under these conditions, $D$ satisfies the \textit{strong dissipativity condition}: there exists a constant $\kappa>0$ such that, for all $X\in\mathbb{R}^{n_{2}}$, 
    \begin{align}\label{Strong Dissipativity}
        \langle DX,X\rangle\geq \kappa |X|^2,
    \end{align}
    where $\langle\:,\rangle $ denotes the inner product on $\mathbb{R}^n$.
    
    A classical system fitting the description \eqref{SystGen}-\eqref{BD}, which we investigate in this manuscript, is the compressible Euler equations with damping:
\begin{equation}\label{Euler}
\left\{
\begin{aligned}
&\partial_{t}\rho+\partial_x (\rho u)=0,\\
&\partial_{t} (\rho u)+\partial_x (\rho u^2)+\partial_x P(\rho)=-\lambda \rho u,
\end{aligned}
\right.
\end{equation}
where $\rho=\rho(x,t)\geq 0$ denotes the fluid density,  $u=u(x,t)\in\mathbb{R}$ stands for the fluid velocity, $P(\rho)$ is the pressure function, and the friction coefficient $\lambda>0$ is assumed to be constant. For a $\gamma$-law pressure with the adiabatic coefficient $\gamma>1$, a standard symmetrization procedure (see \cite[Chapter 4, p.171-172]{HJR}) allows to rewrite the system \eqref{Euler} into the symmetric form \eqref{SystGen}:
\begin{equation}\label{SymEuler}
\left\{
\begin{aligned}
&\partial_{t}c+u\partial_x c+\frac{\gamma-1}{2}c\partial_x u=0,\\
&\partial_{t}u+u\partial_x u+\frac{\gamma-1}{2}c\partial_x c=-\lambda u,
\end{aligned}
\right.
\end{equation}
where $c:=2/(\gamma-1)\sqrt{\partial P(\rho)/\partial\rho}$ is a multiple of the speed of sound. The system \eqref{Euler} describes compressible gas flows passing through porous media and can be interpreted as a relaxation approximation (as $\lambda\to\infty$ and under a diffusive scaling, see \cite{mar1,CoulombelGoudon,CBD3,XuWang}) of the porous medium equation describing fluid flow, heat transfer or diffusion \cite{Vazquez}.
\medbreak
We are also interested in partial nonlinear dissipation phenomena. We investigate the stability of the nonlinearly damped $p$-system:
\begin{equation}\label{syst:EulerNLD}
\left\{
\begin{aligned}
 & \partial_t \rho+\partial_x u=0,\\
  &\partial_t u+\partial_x \rho +|u|^{r-1}u=0,\\
  &(\rho,u)(x,0)=(\rho_{0},u_{0})(x),
\end{aligned}
\right.
\end{equation}
with $r>1$. For small velocities $|u|\ll1$, the system \eqref{syst:EulerNLD} is often used to model gas networks, cf. \cite[eq.(1.2) p.2]{LeugeringMophou}. Moreover, the system \eqref{syst:EulerNLD} is strongly connected to the wave equations with nonlinear damping (see \eqref{wave}), for which numerous stability results have been established in contexts different from the one studied here. The interested readers may refer to \cite{HarauxZuazua,MochizukiMotai,NakaoJung,one1,Todorova1,Wakasa,ZuazuaX,ZuazuaX2} and references therein.

\medbreak
\subsection{Aims of the paper}
Fourier analysis is a very important tool in the study of linear and nonlinear PDEs, particularly for partially dissipative hyperbolic systems \eqref{SystGen}-\eqref{BD}. Their inherent frequency dependence, see Section \ref{sec:intro2}, has led most studies of their large-time asymptotics to rely on Fourier analysis (cf. \cite{BZ,BHN,CBD3,CBD2,SK,Kawashimadoctoral,KYDecay,XK2,XK1D} and references therein). However, Fourier analysis has a few limitations: it is not easily applied to equations set on bounded or exterior domains, it can make it harder to extract beneficial properties from nonlinear terms and to handle space-dependent matrices, and it is not well suited to analyse numerical schemes on non-uniform meshes.

In this paper, we develop a new method inspired by the hyperbolic hypocoercivity calculus in \cite{BZ} but entirely developed in the physical space so as to derive and pave the way for new asymptotic results that cannot be obtained via Fourier analysis. To do it we get inspired on the earlier works by H\'erau and Nier \cite{HerauNier,Herau} and Porretta and Zuazua \cite{PorrettaZuazua2016}.

We establish two main classes of results. First, we study general stability properties for the linearization of the system \eqref{SystGen} around a constant equilibrium, recovering
the optimal time-decay rates without using Fourier analysis and enhanced rates for initial data belonging to weighted Sobolev spaces. Then, we apply our new analysis to study the large time asymptotics of two concrete nonlinear systems:  the compressible Euler equations with linear damping \eqref{Euler} and the $p$-system with nonlinear damping \eqref{syst:EulerNLD}.

\subsection{Existing stability results} \label{sec:intro2}
Before presenting our main results, we recall some well-known properties concerning the stability of the system \eqref{SystGen}-\eqref{BD}. Its partially dissipative nature \eqref{BD} does not play a role when studying its local well-posedness but is crucial to justifying large-time results. For $B=0$, \eqref{SystGen} reduces to a system of hyperbolic conservation laws, and it is well-known that for smooth initial data there exist local-in-time solutions \cite{Katolocal,Majdalocal,Serre} that may develop singularities (shock waves) in finite time \cite{Dafermos1,Lax1}. On the other hand, when $\textrm{rank}(B)=n$, Li \cite{LiTT} proved the existence of global-in-time solutions that are exponentially damped. In our partially dissipative setting \eqref{BD}, i.e. $\textrm{rank}(B)<n$, the dissipation induced by $BU$ lacks coercivity, as it affects only some components of the solution. Nevertheless, as observed by Shizuta and Kawashima \cite{SK,Kawashimadoctoral},  interaction effects between the hyperbolic and dissipative parts of the system may generate dissipation in directions that are not affected by $B$. More recently, Beauchard and Zuazua \cite{BZ} have framed this phenomenon in the spirit of Villani's hypocoercivity theory \cite{Villani} and improved its understanding. Below, we present some key points of their approach.
\medbreak
Linearizing the system \eqref{SystGen}-\eqref{BD} around  a constant equilibrium $\bar{U}\in \text{Ker}(B)$ and denoting $A:=A(\bar{U})$, we obtain the linear system
\begin{equation}\label{Systlin}
\begin{aligned}
    &\partial_t U + A\partial_xU =-BU.
\end{aligned}
\end{equation}
To further highlight the partially dissipative structure of \eqref{Systlin}-\eqref{BD}, we write $U=(U_1,U_2)$ where $U_1\in \textrm{Ker}(B)=\mathbb{R}^{n_1}$ and $U_2\in  \textrm{Im}(B)=\mathbb{R}^{n_2}$ with $n_{1},n_{2}$ satisfying $1\leq n_{1}, n_{2}<n$ and $n_{1}+n_{2}=n$. By the decomposition
\begin{equation}\nonumber
\begin{aligned}
A=\begin{pmatrix} 
A_{1,1} & A_{1,2} \\ A_{2,1} & A_{2,2}
    \end{pmatrix},
\end{aligned}
\end{equation}
we rewrite the system \eqref{Systlin} as
    \begin{equation}\label{GE}
    \begin{aligned}
    \left\{ 
    \begin{matrix}
    \displaystyle \hspace{-0.8cm}\partial_tU_1 +  A_{1,1}\partial_{x}U_1+ A_{1,2}\partial_{x}U_2=0,\\ 
    \displaystyle  \partial_tU_2 +  A_{2,1}\partial_xU_1+ A_{2,2}\partial_x U_2=-D U_2, 
    \end{matrix} \right. 
\end{aligned} 
\end{equation}
with the initial datum $(U_1,U_2)(x,0)=U_0(x)=(U_{1,0},U_{2,0})(x)$.

Using the symmetry of $A$ and the condition \eqref{Strong Dissipativity}, one has the energy dissipation law
\begin{align}\label{ZZ}\dfrac{1}{2}\dfrac{d}{dt}\|(U_1,U_2)(t)\|_{L^2}^2+\kappa\|U_2(t)\|_{L^2}^2\leq0.
\end{align}
The law \eqref{ZZ} demonstrates a lack of coercivity as only dissipation for the component $U_2$ is observed. To recover dissipation for $U_1$, in \cite{BZ}, the authors first apply the Fourier transform to \eqref{Systlin}, which yields the parameterized ODE 
\begin{align}\label{ODE}\partial_t\widehat{U}+i\xi A \widehat{U}+B\widehat{U}=0.
\end{align}
Then, a key observation is that, for a fixed $\xi\ne 0$, the exponential stability of the solutions of \eqref{ODE} is equivalent to the {\emph{Kalman rank condition}} for the pair $(A,B)$:
        \begin{align} \label{K}
 \text{The matrix    }\:\mathcal{K}(A,B):=(B,AB,\ldots,A^{n-1}B)\quad \text{has full rank } n.
 \end{align}
Such a result, established for instance in \cite{Kalman} in the context of control theory, shows that the large-time stability can hold even if the rank of the dissipative matrix is not full. For hyperbolic systems, results in the same vein can be obtained, but due to the presence of the parameter $\xi$, especially as $\xi\to 0$, uniform exponential stability may not be expected. 

In order to get decay estimates with explicit control on the dependence of the frequency parameter $\xi$, inspired by hypocoercivity arguments, in \cite{BZ} the following Lyapunov functional was introduced:
\begin{equation}\label{intro:lya}
  \mathcal{L}_{\xi}(t):=|\widehat{U}|^2+\min(\frac{1}{|\xi|},|\xi|)\, {\textrm{Re}} \sum_{k=1}^{n-1} \epsilon_k \langle BA^{k-1}\widehat{U}, BA^k\widehat{U}\rangle.
  \end{equation}
The following proposition plays a fundamental role when quantifying the decay rates in terms of the rank conditions, as a function of $\xi$.
\begin{proposition}[\cite{BZ}] \label{PropBZ}
    Let $A$ and $B$ be symmetric matrices such that $B$ satisfies \eqref{BD}. The following conditions are equivalent.
    \begin{itemize}
        \item The system \eqref{Systlin} is polynomially stable and behaves as the heat equation for large times.
        
        \item The pair $(A,B)$ verifies the Kalman rank condition \eqref{K}.
        
\item For every $y\in \mathbb{C}^n$, there exist constants $c,C>0$ such that \vspace{-0.2cm}
\begin{align}\label{intro:normeq}
c|y|^2\leq \bigg(\sum_{k=0}^{n-1} |BA^{k}y|^2\bigg)^{\frac{1}{2}} \leq C|y|^2.\end{align}
 \end{itemize}

\end{proposition}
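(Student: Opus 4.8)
The plan is to prove the chain \textbf{(Kalman)} $\Leftrightarrow$ \textbf{($N$ is a norm)}, then \textbf{($N$ is a norm)} $\Rightarrow$ \textbf{(heat-like stability)}, and finally \textbf{(stability)} $\Rightarrow$ \textbf{(Kalman)}, so that all the analysis is concentrated in the middle implication. The equivalence \textbf{(Kalman)} $\Leftrightarrow$ \textbf{($N$ is a norm)} is elementary: the map $L\colon y\mapsto(By,BAy,\dots,BA^{n-1}y)\in(\mathbb C^n)^n$ is linear with matrix $\mathcal K(A,B)$, and $N(y)$ is the Euclidean norm of $Ly$; hence $N$ is automatically positively homogeneous and subadditive, and it is a genuine norm precisely when $L$ is injective, i.e. when $\mathrm{rank}\,\mathcal K(A,B)=n$. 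I also record here, for later use, that if $BA^ky=0$ for $k=0,\dots,n-1$ then $BA^ky=0$ for every $k\ge0$: this follows from the Cayley--Hamilton theorem, which writes $A^n,A^{n+1},\dots$ as polynomials in $I,A,\dots,A^{n-1}$.

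For \textbf{($N$ is a norm)} $\Rightarrow$ \textbf{(heat-like stability)} I would pass to the Fourier variable and work with the ODE \eqref{ODE}, $\partial_t\widehat U=-(i\xi A+B)\widehat U$, and with the functional $\cL_\xi$ of \eqref{intro:lya}. Since $|BA^k\widehat U|\le C|\widehat U|$ and $\min(|\xi|^{-1},|\xi|)\le1$, for $\varepsilon_1,\dots,\varepsilon_{n-1}$ small the functional $\cL_\xi(t)$ is comparable to $|\widehat U(t,\xi)|^2$, uniformly in $\xi\ne0$. Differentiating in time with the help of $A=A^\top$ and \eqref{Strong Dissipativity}: the quadratic part $|\widehat U|^2$ obeys $\frac{d}{dt}|\widehat U|^2=-2\langle B\widehat U,\widehat U\rangle$, the Fourier version of \eqref{ZZ}, which controls $|B\widehat U|^2=|BA^0\widehat U|^2$; the $k$-th twisted cross term, through the anti-symmetric transport $i\xi A$, upgrades this to a dissipation of $|BA^k\widehat U|^2$ at the price of terms that are either of higher order in the $\varepsilon_j$'s or dominated by the $|BA^j\widehat U|^2$ with $j<k$. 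Fixing $\varepsilon_1\gg\varepsilon_2\gg\dots\gg\varepsilon_{n-1}$ in a cascade absorbs all errors and yields, uniformly in $\xi$,
\[
\frac{d}{dt}\cL_\xi(t)+c\,\min(\xi^2,1)\sum_{k=0}^{n-1}|BA^k\widehat U(t,\xi)|^2\le0 .
\]
Here the hypothesis enters: since $N$ is a norm on the finite-dimensional space $\mathbb C^n$ it is equivalent to $|\cdot|$, so $\sum_k|BA^k\widehat U|^2=N(\widehat U)^2\gtrsim|\widehat U|^2\gtrsim\cL_\xi$, and Grönwall gives $|\widehat U(t,\xi)|^2\lesssim e^{-c\frac{\xi^2}{1+\xi^2}t}|\widehat U_0(\xi)|^2$. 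By Plancherel and a split into $|\xi|\le1$ and $|\xi|>1$ this produces uniform exponential decay of the high frequencies and a heat-type polynomial rate of the low ones (the rate $(1+t)^{-1/4}$ for $U_0\in L^1\cap L^2$, and decay to zero for general $L^2$ data); that $U$ \emph{behaves like} the heat equation for large times then follows by expanding the symbol $i\xi A+B$ near $\xi=0$, whose eigenvalue branches on $\ker B$ have real part comparable to $\xi^2$, so the leading profile is governed by heat kernels.

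For \textbf{(stability)} $\Rightarrow$ \textbf{(Kalman)} I argue by contraposition. If the rank condition fails, then $W:=\bigcap_{k=0}^{n-1}\ker(BA^k)\ne\{0\}$; by the Cayley--Hamilton remark $W=\bigcap_{k\ge0}\ker(BA^k)$, which is $A$-invariant, and since $A$ is real symmetric $A|_W$ has a real eigenvector $y_0\in W\setminus\{0\}$, $Ay_0=\mu y_0$ with $\mu\in\mathbb R$ and $By_0=0$. Choosing $\xi_0\ne0$ and $\phi\in C_c^\infty(\mathbb R)$, $\phi\not\equiv0$, supported near $\xi_0$, the function $\widehat V(t,\xi):=\phi(\xi)e^{-i\mu\xi t}y_0$ solves \eqref{ODE} (the transport and damping terms cancel because $Ay_0=\mu y_0$ and $By_0=0$), hence $V:=\mathcal F^{-1}\widehat V$ is a smooth global solution of \eqref{Systlin} with $\|V(t)\|_{L^2}$ constant and nonzero, contradicting polynomial stability. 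Combined with the first two steps, the three conditions are equivalent. The main difficulty lies in the middle implication, specifically in checking that the cascade $\varepsilon_1\gg\dots\gg\varepsilon_{n-1}$ can be fixed so as to absorb every remainder term uniformly in $\xi$ and to recover the \emph{full} semi-norm $N(\widehat U)$ as a dissipation; it is only in the inequality $N(\widehat U)^2\gtrsim|\widehat U|^2$ that the rank condition is genuinely used, everything else being soft linear algebra, Grönwall, and a standard frequency split.
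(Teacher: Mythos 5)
Your proposal is correct and coincides with the Beauchard--Zuazua hypocoercivity strategy that the paper cites for this proposition and outlines in Section \ref{sec:intro2} and in the sketch of Lemma \ref{appendix1}: the Fourier-side Lyapunov functional $\cL_\xi$ with a cascade of small $\varepsilon_k$, the rank hypothesis entering only through the finite-dimensional norm equivalence $N(\widehat U)^2\gtrsim|\widehat U|^2$, Gr\"onwall, and a low/high frequency split. Your contrapositive for (stability) $\Rightarrow$ (Kalman), built on a real eigenvector of $A$ lying in $\bigcap_{k\ge0}\ker(BA^k)$ and yielding a conserved-$L^2$ plane-wave solution, is the standard argument that the paper leaves implicit.
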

The parameters $\epsilon_k$ in the Lyapunov functional \eqref{intro:lya} need to be chosen small enough and the frequency-weight $\min(1/|\xi|,|\xi|)$ are used so the dissipative and hyperbolic effects interact efficiently. Once this is done, differentiating  \eqref{intro:lya} with respect to time and using the fact that \eqref{intro:normeq} define a full norm, for some constant $c>0$, one obtains
\begin{align}\label{DecayBZ}
  |\widehat{U}(\xi,t)|^2\lesssim |\widehat{U_{0}}(\xi)|^2e^{-c\min\{1,|\xi|^2\}t}
 \end{align}
for all frequencies $\xi$, which leads to sharp decay rates (see Proposition \ref{PropBZ}). 

For more details concerning this approach, the interested reader may refer to Section \ref{sec:appedixSK} or directly to \cite{BZ}.

\medbreak

\subsection{Outline of the paper} 
As mentioned above, the main goal of this paper is to develop the physical space version of the frequency-dependent hypocoercivity calculus. 

In Section \ref{sec:mainres}, we state our main results and present our methodology. Section \ref{sec:3} is devoted to proving the natural time-decay estimates for linear systems without Fourier analysis and without additional $L^1$ regularity assumption on the initial data. These estimates are further improved in Sections \ref{sec:4}-\ref{sec:wavemethod} under additional space-weighted conditions on the initial data. Section \ref{sec:EulerLin} is devoted to the analysis of the nonlinear Euler system \eqref{Euler}, while the nonlinearly damped $p$-system \eqref{syst:EulerNLD} is studied in Section \ref{sec:EulerNonlinD}. Section \ref{sec:ext} presents additional results and comments on possible extensions of our methods. Some technical lemmas are relegated to the appendix.

\section{Main results and methodology}\label{sec:mainres}

\subsection{Natural time-decay rates without Fourier analysis} 
In our first result, we retrieve the natural large-time asymptotics of linear partially dissipative hyperbolic systems \eqref{Systlin} without using Fourier analysis or $L^1$-type assumptions on the initial data.

\begin{theorem}\label{DecayThm1}
 Let $U_{0}\in H^1$, $A$ and $B$ be symmetric $n\times n$ matrices with $B$ as in \eqref{BD}, satisfying the Kalman rank condition \eqref{K}. Then, for all $t>0$, the solution $U$ of  \eqref{Systlin} with the initial datum $U_{0}$ satisfies
 \begin{equation}
\label{e:decayThm1}
\| U_2(t)\|_{L^2}+\|\partial_x U(t)\|_{L^2} \leq C(1+t)^{-\frac{1}{2}}\|U_0\|_{H^1},
\end{equation}
where $C>0$ is a constant independent of time and $U_0$.
\end{theorem}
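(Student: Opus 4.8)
The plan is to mimic, entirely in physical space, the frequency-dependent Lyapunov construction of \cite{BZ}, replacing multiplication by $i\xi$ with the operator $\d_x$ and the frequency weight $\min(1/|\xi|,|\xi|)$ with a nonlocal/spatial weight that behaves like $\d_x$ at high frequencies and like $\d_x^{-1}$ at low frequencies. Concretely, I would work with the two energies $E_0(t):=\|U(t)\|_{L^2}^2$ and $E_1(t):=\|\d_x U(t)\|_{L^2}^2$, which already satisfy the basic dissipation laws obtained by testing \eqref{Systlin} and its $\d_x$-derivative against $U$ and $\d_x U$ respectively: $\tfrac12\tfrac{d}{dt}E_0+\kappa\|U_2\|_{L^2}^2\le0$ and $\tfrac12\tfrac{d}{dt}E_1+\kappa\|\d_x U_2\|_{L^2}^2\le0$. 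The missing ingredient is dissipation of $\d_x U_1$, and to produce it I would introduce, for $k=1,\dots,n-1$, the cross terms
\begin{equation*}
I_k(t):=\int_{\mathbb{R}} \langle BA^{k-1}\d_x U,\, BA^k U\rangle \dx,
\end{equation*}
and form the Lyapunov functional $\mathcal{L}(t):=E_1(t)+\sum_{k=1}^{n-1}\varepsilon_k I_k(t)$ with $0<\varepsilon_{n-1}\ll\cdots\ll\varepsilon_1\ll1$ chosen successively small. The point of using $\d_x U$ in $I_k$ rather than $U$ is that it matches the homogeneity that, in Fourier variables, corresponds to the weight $|\xi|$ for $|\xi|\ge1$, so that no negative powers of $\d_x$ appear and the estimate closes at the $H^1$ level claimed in \eqref{e:decayThm1}.

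\textbf{Key steps.} First I would compute $\tfrac{d}{dt}I_k$ using \eqref{Systlin}: the term where the time derivative hits the factor $BA^kU$ produces, after integrating by parts and using $BA=-(BA)^T$ is false in general but $A$ symmetric gives $\langle BA^{k-1}\d_x U, BA^{k+1}\d_x U\rangle$-type contributions and a ``good'' term $-\langle BA^{k-1}\d_x U, BA^{k-1}\cdot B\,\cdot\rangle$; the crucial algebraic mechanism is the telescoping identity from \cite{BZ}: the leading contribution of $\tfrac{d}{dt}I_k$ is $-\|BA^k\d_x U\|_{L^2}^2$ plus terms controllable by $\|BA^{k-1}\d_x U\|_{L^2}^2$ (absorbed by the previous step, hence the ordering of the $\varepsilon_k$) and by $\|\d_x U_2\|_{L^2}^2$ and its derivatives (absorbed by $E_1$'s dissipation and by $E_0$). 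Summing over $k$ with the graded smallness of $\varepsilon_k$, and invoking the equivalence of Proposition \ref{PropBZ} — namely that under the Kalman rank condition $N(y)^2=\sum_{k=0}^{n-1}|BA^k y|^2$ is a genuine norm on $\mathbb{C}^n$, hence $\sum_{k=0}^{n-1}\|BA^k\d_x U\|_{L^2}^2\gtrsim\|\d_x U\|_{L^2}^2$ — I obtain
\begin{equation*}
\frac{d}{dt}\mathcal{L}(t)+c\,\|\d_x U(t)\|_{L^2}^2\le 0,
\end{equation*}
with $\mathcal{L}\simeq E_1$ for $\varepsilon_k$ small. Second, this differential inequality alone gives exponential-type decay of $\|\d_x U\|_{L^2}$ only if one controls $\|\d_x U\|_{L^2}$ from below by $\mathcal{L}$, which fails at low frequencies — precisely the obstruction Fourier analysis handles transparently. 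To get the polynomial rate $(1+t)^{-1/2}$ I would instead interpolate: combine $\tfrac{d}{dt}\mathcal{L}+c\|\d_x U\|_{L^2}^2\le0$ with the conservation-type bound $\|U(t)\|_{L^2}\le\|U_0\|_{L^2}$ from the dissipation law \eqref{ZZ}. Since $\|\d_x U\|_{L^2}^2\ge \mathcal{L}^{2}/(\text{const}\cdot\|U\|_{L^2}^2)$ is the wrong direction, the right move is a Nash-type inequality in physical space: there is no Nash inequality bounding $\|f\|_{L^2}$ by $\|f\|_{L^2}$-type quantities, so instead I would track the lower-order energy and use that $\mathcal{L}\le\|\d_x U\|_{L^2}^2$ together with a Gagliardo–Nirenberg/interpolation bound $\|\d_x U\|_{L^2}\lesssim \|U\|_{L^2}^{1/2}\|\d_{xx}U\|_{L^2}^{1/2}$; bootstrapping the analogous Lyapunov functional at the second-order level (which is legitimate by linearity, propagating $H^2$ data and then removing the extra regularity by density/approximation, or better by a time-weighted energy argument avoiding $H^2$) yields $\tfrac{d}{dt}\mathcal{L}+c\,\mathcal{L}^{2}/\|U_0\|_{L^2}^2\le 0$, whose integration gives $\mathcal{L}(t)\lesssim \|U_0\|_{H^1}^2(1+t)^{-1}$, i.e. \eqref{e:decayThm1} after taking square roots and recalling $\|U_2\|_{L^2}^2\le \tfrac{1}{\kappa}(-\tfrac12\tfrac{d}{dt}E_0)$ is also controlled — more directly, $\|U_2\|_{L^2}\lesssim\|\d_x U\|_{L^2}$ is false, so for the $U_2$ part I would instead use the $E_0$-dissipation law integrated in time together with the decay of $\mathcal{L}$, or include an $\varepsilon_0$-cross-term pairing $U_2$ with lower-order quantities.

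\textbf{Main obstacle.} The delicate point — and the place where the physical-space approach genuinely differs from \cite{BZ} — is obtaining the low-frequency ($(1+t)^{-1/2}$, i.e. heat-like) rate without Fourier localization: in Fourier space one simply writes $e^{-c\min(1,|\xi|^2)t}$ and integrates against $|\widehat{U_0}|^2$, whereas here the Lyapunov inequality $\tfrac{d}{dt}\mathcal{L}+c\|\d_x U\|^2\le0$ is ``too strong'' at high frequencies and useless at low frequencies on its own. The resolution is to feed in the correct interpolation inequality (Nash/Gagliardo–Nirenberg) relating $\|\d_x U\|_{L^2}$, $\|U\|_{L^2}$ and a higher derivative, and to control that higher derivative uniformly in time via the same hypocoercive functional one level up; closing this loop cleanly — in particular justifying the higher regularity and getting the constant to depend only on $\|U_0\|_{H^1}$ and not on $\|U_0\|_{H^2}$ — is the crux, and I expect it to require a time-weighted energy estimate (multiplying the level-two Lyapunov inequality by a suitable power of $(1+t)$) rather than a naive bootstrap.
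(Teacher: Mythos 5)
The paper's proof shares your basic hypocoercivity mechanism (the corrector terms $\sum_k\varepsilon_k\langle BA^{k-1}U,BA^k\d_xU\rangle_{L^2}$, the graded smallness of the $\varepsilon_k$, the Cayley--Hamilton reduction, and Proposition~\ref{PropBZ} converting $\sum_k\|BA^k\d_xU\|_{L^2}^2$ into $\|\d_xU\|_{L^2}^2$), so that part of your plan is on target. But the key step that produces the $(1+t)^{-1/2}$ rate is different and simpler than what you outline, and you do not have it.

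First, a local inconsistency in your step~(1): you want $\mathcal{L}\simeq E_1=\|\d_xU\|_{L^2}^2$ together with $\frac{d}{dt}\mathcal{L}+c\|\d_xU\|_{L^2}^2\le0$, but this would give \emph{exponential} decay of $\|\d_xU\|_{L^2}$, which is false. The resolution is that such a differential inequality cannot close at the $E_1$-only level: differentiating $I_k$ produces the remainder $\langle BA^{k-1}BU,BA^k\d_xU\rangle_{L^2}$, whose control costs $\|U_2\|_{L^2}^2$ (a zero-derivative quantity) and therefore forces you to include $E_0=\|U\|_{L^2}^2$ (whose dissipation law is the only source of $\|U_2\|_{L^2}^2$ on the good side). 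Once $E_0$ is in, $\mathcal{L}\simeq\|U\|_{H^1}^2$ and the inequality $\frac{d}{dt}\mathcal{L}+c(\|U_2\|_{L^2}^2+\|\d_xU\|_{L^2}^2)\le0$ yields a Lyapunov bound and integrated dissipation but \emph{no rate} by itself, which is exactly the obstacle you identify.

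Second, your proposed fix — Gagliardo--Nirenberg interpolation against a higher derivative, bootstrapping a ``level-two'' Lyapunov functional — does not close as stated: to absorb the resulting $\|\d_{xx}U\|_{L^2}^4$ term you would need an a priori bound $\|\d_{xx}U\|_{L^2}\lesssim\|U_0\|_{L^2}$, i.e.\ $H^2$ data, and you correctly flag removing this extra regularity as ``the crux'' without resolving it. The paper's mechanism is instead a \emph{time-weight built directly into the $H^1$-level Lyapunov functional}:
\begin{equation*}
\cL(t)=\|U(t)\|_{H^1}^2+\eta_0\,t\,\|\d_xU(t)\|_{L^2}^2+\sum_{k=1}^{n-1}\varepsilon_k\langle BA^{k-1}U,BA^k\d_xU\rangle_{L^2}.
\end{equation*}
The point is that $\frac{d}{dt}\bigl(t\|\d_xU\|_{L^2}^2\bigr)=\|\d_xU\|_{L^2}^2+t\,\frac{d}{dt}\|\d_xU\|_{L^2}^2$, the last term is $\le0$ by the $\dot H^1$ dissipation law, and the extra $\eta_0\|\d_xU\|_{L^2}^2$ is absorbed by the hypocoercive dissipation you already produced. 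Hence $\frac{d}{dt}\cL\le0$, so $\cL(t)\le\cL(0)\lesssim\|U_0\|_{H^1}^2$, while $\cL(t)\gtrsim(1+\eta_0t)\|\d_xU(t)\|_{L^2}^2$; no interpolation, no higher-order bootstrap, and no $H^2$ assumption is needed. This is the Porretta--Zuazua $t$-multiplier idea and it is the missing ingredient in your plan. For $\|U_2(t)\|_{L^2}$, the paper then closes the argument not via additional cross-terms but by observing that $U_2$ solves a linearly damped ODE forced by $\d_xU$, so $\|U_2(t)\|_{L^2}\lesssim e^{-\kappa t}\|U_{2,0}\|_{L^2}+\int_0^t e^{-\kappa(t-\tau)}\|\d_xU(\tau)\|_{L^2}\,d\tau$ and Gr\"onwall with the $(1+t)^{-1/2}$ bound on $\|\d_xU\|_{L^2}$ gives the same rate for $U_2$ — a cleaner route than the alternatives you sketch.
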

 \begin{remark}[On the sharpness of the decay rates]\label{rmq:optimaldecay} \hfill
    \begin{itemize} 
    \item According to Proposition \ref{PropBZ} and the Fourier representation  \eqref{DecayBZ}, the rate obtained for $U$ in the estimate \eqref{e:decayThm1} is sharp in our Hilbertian functional framework.  It actually corresponds to that of the heat equation, cf. Lemma \ref{lemmaheat0} in the appendix, but, due to the absence of regularizing effects, under the assumption that the initial datum lies in $H^1$ but not only in $L^2$.
    
       \item Unlike Fourier-based approaches, our method is not well suited to exploiting the $L^1$ control of the initial data. However, it is adaptable to weighted spaces, as we will see in the next results.
    \end{itemize}
    \end{remark}

\vspace{1mm}

\textit{Strategy of proof of Theorem \ref{DecayThm1}.} The core of the proof is the construction of an augmented energy functional in the spirit of \eqref{intro:lya} but entirely defined in the physical space. Inspired by the works of H\'erau and Nier \cite{HerauNier,Herau} and Porretta and Zuazua \cite{PorrettaZuazua2016}, concerning the asymptotic decay of kinetic equations, we consider the time-weighted functional
\begin{equation}\label{def:GeneralLyaNoF0}
\begin{aligned}
  \mathcal{L}(t)= \|U(t)\|_{H^1}^2+\eta_{0} t\|\partial_x U(t)\|_{L^2}^2+\sum_{k=1}^{n-1} \varepsilon_k \bigl(BA^{k-1}U,BA^k\partial_xU\bigl)_{L^2},
  \end{aligned}
  \end{equation}
where $(\,,\,)_{L^2}$ denotes the inner product in $L^2$. Setting the constants $\eta_{0}$ and $\var_{k}$, $k=1,2,...,n-1$, suitably small and using the Kalman rank condition \eqref{K} and Proposition \ref{PropBZ}, we obtain
\begin{align}
\label{Intro:lya2}
\frac{d}{dt}\mathcal{L}(t)+c\Big(\|U_{2}(t)\|_{L^2}^2+t\|\partial_x U_2(t)\|_{L^2}^2+\|\partial_x U(t)\|_{L^2}^2\Big)\leq 0,
\end{align}
from which we infer
\begin{align}\label{eq:comthm1}
\|\partial_xU(t)\|_{L^2}\leq C(1+t)^{-\frac{1}{2}}\|U_0\|_{H^1}. 
\end{align} 
Then, combining \eqref{eq:comthm1} with the fact that $U_2$ verifies a damped equation with the linear source term $-A_{2,1}\partial_{x}U_1-A_{2,2}\partial_{x}U_2$, Grönwall's inequality yields the extra decay for $U_2$ stated in \eqref{e:decayThm1}.

\medbreak

\subsection{Enhanced decay rates in weighted Sobolev spaces}

As depicted in Proposition \ref{PropBZ}, the solutions of \eqref{Systlin} behave similarly to the solutions of the heat equation as the time evolves. More precisely, as explained in \cite{BHN,HsiaoLiu92CMP,Liu87CMP,Yong}, they decay as the solutions of\begin{equation}\label{parabolic}
\left\{
\begin{aligned}
&\partial_{t}N-A_{1,2}D^{-1}A_{2,1} \partial^2_{x}N=0,\\
&N(x,0)=U_{1,0}(x),
\end{aligned}
\right.
\end{equation}
where the operator $-A_{1,2}D^{-1}A_{2,1}\partial_x^2$ is strongly elliptic when the Kalman rank condition \eqref{K} holds, see \cite{CBD3,Yong0,Yong}.
Based on Fourier analysis tools, Bianchini, Hanouzet and Natalini \cite{BHN} obtained faster decay rates of the error of solutions between \eqref{Systlin} and \eqref{parabolic} using $L^1$-type assumptions on the initial data. 

 Our next theorem justifies the validity of the large-time parabolic profile \eqref{parabolic} of the system \eqref{Systlin} 
without Fourier analysis and provides new enhanced time-decay rates when the initial datum belongs to weighted Sobolev spaces.

\begin{theorem}\label{ThmDecayGeneral1}

 Let the hypotheses of Theorem \ref{DecayThm1} be satisfied and let $U$ be the solution of \eqref{Systlin} with the initial datum $U_0$. In addition, suppose also that $|x| U_{2,0}\in L^2$ and
 \begin{align}\label{a11}
     A_{1,1}=0.
 \end{align} 
 Then,  for all $t>0$ and $0<\var\ll1/2$,
 \begin{align}
 &\|(U_1-N)(t)\|_{L^2}\leq C(1+t)^{-\frac{1}{2}+\var} (\|U_0\|_{H^1}+\| |x| U_{2,0}\|_{L^2}),\label{U1NL2}
\end{align}
where $N$ is the solution of \eqref{parabolic} associated to the initial datum $U_{1,0}$ and $C>0$ is a constant independent of time and $U_{0}$.

 Furthermore, let $0<\mu\leq 1$. If we further assume 
\begin{align}  X_0:=\|U_{0}\|_{H^1}+\||x|^{\mu}U_{1,0}\|_{L^2}+\| |x| U_{2,0}\|_{L^2}< \infty,\label{a112}
\end{align}
then, for all $t>0$,
\begin{equation}\label{decay111}
\left\{
\begin{aligned}
&\|U(t)\|_{L^2}\leq C (1+t)^{-\frac{\mu}{2}}X_0,\\
&\|U_2(t)\|_{L^2}+\|\partial_x U(t)\|_{L^2}\leq C(1+t)^{-\frac{\mu}{2}-\frac{1}{2}}X_0.
\end{aligned}
\right.
\end{equation}
\end{theorem}

\begin{remark}

Some comments are in order.

\begin{itemize}

  \item In the context of fluid mechanics, the condition \eqref{a11} typically implies that the velocity equilibrium satisfies $\bar{u}=0$. 
   It is a natural condition when $U_1$ is a scalar (like e.g. the fluid density) as it is always satisfied up to the Galilean change of frame $(x,t)\rightarrow (x-A_{1,1}t,t)$, see \cite[p. 6]{Arnold}. 
\item When $1/2<\mu\leq 1$, the assumption $X_0<\infty$ is a stronger condition than the $L^1$ assumption usually used in Fourier-based approaches, but allows for faster decay rates. For $\mu=1/2$, the rate in $\eqref{decay111}_1$ would correspond to the one recovered with Fourier analysis and for initial data belonging to $L^1$, cf. Lemma \ref{appendix1}.

\end{itemize}
\end{remark}

\textit{Strategy of proof of Theorem \ref{ThmDecayGeneral1}.} 
Inspired by considerations from \cite{CBD3,CBD2}, we consider the damped mode \begin{align}
R:=D^{-1}A_{2,1}U_1+D^{-1}A_{2,2} U_2+\int^x_{-\infty} U_2(y,t)\,dy,\label{R}
\end{align}
which has faster decay rates compared to $U_1$ and $U_2$ since $R$  satisfies the purely damped system
\begin{equation}\label{eqR}
\begin{aligned}
&\partial_{t}R+D R=D^{-1}A_{2,1} \partial_{t} U_1+D^{-1} A_{2,2} \partial_{t} U_{2}.
\end{aligned}
\end{equation}
Inserting $R$ in the system satisfied by $U_1$, we have the parabolic system
\begin{equation}
\partial_{t}U_1-A_{1,2}D^{-1}A_{2,1} \partial^2_{x} U_1=\partial_{x}^2(-A_{1,2} R+D^{-1}A_{2,2} U_2).\label{U1para1}
\end{equation}
 Estimating the system satisfied by $U_1-N$ and using the fact that $R$ and $U_2$ decay rapidly, we obtain the asymptotic stability estimate \eqref{U1NL2}. Then, in order to recover $L^2$ decay estimates for $U_1$, we first recover the $t^{-\frac{\mu}{2}}$ time-decay estimates of $\|N(t)\|_{L^2}$ by performing an energy argument on \eqref{parabolic} and using the Caffarelli-Kohn-Nirenberg inequality. Together with \eqref{U1NL2}, this yields $\eqref{decay111}_1$ for $0<\mu<1$. In the case $\mu=1$, we prove $\eqref{decay111}_1$ by deriving time-weighted energy estimates for $(U_1-N,U_2)$ in the spirit of Theorem \ref{DecayThm1}. Finally, combining the $L^2$ decay we obtained for $U_1$ together with the Lyapunov inequality \eqref{Intro:lya2} obtained in Theorem \ref{DecayThm1}, we derive the faster decay rates $\eqref{decay111}_{2}$.
\bigbreak

In our next result, we establish additional enhanced decay estimates, for a larger class of initial data, but under stronger structural conditions on the system.

\begin{theorem}\label{ThmDecayGeneral2}
Let $1/2<\mu\leq 1$ and assume that the hypotheses of Theorem \ref{DecayThm1} are satisfied.  Suppose also \eqref{a11}, $n_1=n_2$, $A_{1,2}A_{2,1}$ positive definite,
\begin{align}
    Y_{0}:=\|U_{0}\|_{H^1}+\||x|^{\mu} U_{1,0}\|_{L^2}+\||x|^{\mu-\frac{1}{2}} U_{2,0}\|_{L^2} < \infty,\label{a21}
\end{align}
and $\partial_{t}U_{1} |_{t=0}=-A_{1,2}\partial_x U_{2,0}$.
In addition, in the case $1/2<\mu<1$, let $|A_{1,2}A_{2,1}|\leq 1$. Then, for all $t>0$, the solution $U$ of the system \eqref{Systlin} with the initial datum $U_{0}$ satisfies
\begin{equation}\label{decay1112}
\left\{
\begin{aligned}
&\|U(t)\|_{L^2}\leq C(1+t)^{-\mu+\frac{1}{2}}Y_0,\\
&\|U_2(t)\|_{L^2}+\|\partial_x U(t)\|_{L^2}\leq C(1+t)^{-\mu}Y_0,
\end{aligned}
\right.
\end{equation}
where $C>0$ is a constant independent of time, $\mu$ and $U_{0}$.
\end{theorem}
\begin{remark} Some remarks are in order.
\begin{itemize}
\item The time-decay rates obtained in \eqref{decay1112} are sharp for $\mu=1$, but for $\mu<1$ we obtain a slower decay compared to Theorem \ref{ThmDecayGeneral1} since $-\mu+1/2> -\mu/2$.

\item 
The conditions imposed on $A$ ensure that the system \eqref{Systlin} can be rewritten in a wave-like formulation (see \eqref{wave0}). Such conditions are usually imposed on the equilibrium states used to study the stability of systems in fluid mechanics such as the compressible Euler equations \eqref{Euler}, see \cite{HJR}.

\item One of the main interesting aspects of Theorem \ref{ThmDecayGeneral2} is that it can be extended to deal with nonlinear dissipative phenomena (cf. Section \ref{mainres:NLD}), contrary to Theorem \ref{ThmDecayGeneral1}.
\end{itemize}
\end{remark}

\vspace{1mm}

\textit{Strategy of proof of Theorem \ref{ThmDecayGeneral2}.} Defining the unknown $W$ such that $U_{1}=\partial_x W$ and $U_{2}=-A_{1,2}^{-1}\partial_{t} W$, we rewrite the system \eqref{Systlin} as the extended damped wave equation 
\begin{align} 
&\partial_{t}^2 W-A_{1,2} A_{2,1} \partial_{x}^2 W +A_{1,2} A_{2,2}A_{1,2}^{-1} \partial_t \partial_x W+ A_{1,2} D A_{1,2}^{-1}\partial_t W =0.\label{wave0}
\end{align}
Under this formulation, it suffices to estimate the wave energy $\|(\partial_{t}W,\partial_{x}W)(t)\|_{L^2}^2$ to get the decay for $U$ in $L^2$. To this end, we generalize and combine the works of Mochizuki and Motai \cite{MochizukiMotai} and Ikehata \cite{Ikehata1D}. In Lemma \ref{lemmaW}, performing hypocoercivity estimates with space-time weights on \eqref{wave0}, we get
\begin{equation}\label{intro:decay1112}
\begin{aligned}
&\int_{\mathbb{R}} (1+t+|x|)^{2\mu-1}(|\partial_{t}W|^2+|\partial_{x}W|^2)\,dx\lesssim Y^2_0,
\end{aligned}
\end{equation}
which only holds for $\mu\leq 1$ and provides the desired decay rates of $U$ in  \eqref{decay1112} since $(1+t)^{2\mu-1}\leq (1+t+|x|)^{2\mu-1}$, for $x\in \mathbb{R}$ and $\mu \geq 1/2$.
The faster rates of $U_2$ and $\partial_x U$ in \eqref{decay1112} are recovered by combining estimate \eqref{intro:decay1112} with the hypocoercive inequality \eqref{Intro:lya2}.
\medbreak

\subsection{Application to the compressible Euler equations with damping}

In our next result, we justify a nonlinear analogue of Theorem \ref{DecayThm1} for the compressible Euler system with linear damping \eqref{Euler}, when the initial datum is close to a constant equilibrium in $H^2$.

\begin{theorem}\label{ThmEuler1}
Consider the equilibrium state $(\bar{\rho},0)$, with $\bar{\rho}>0$ a given constant, and suppose that 
\begin{equation}
    \begin{aligned}
        &P(\rho)\in C^{\infty}(\mathbb{R}_{+}),\quad\quad P'(\rho)>0.\label{P}
    \end{aligned}
\end{equation}
Then, there exists a constant $\delta_0>0$ such that if the initial datum $(\rho_{0},u_{0})$ satisfies
\begin{align}
    &\|(\rho_{0}-\bar{\rho},u_{0})\|_{H^{2}}\leq \delta_{0},\label{eulera1}
\end{align}
the system \eqref{Euler} admits a unique global-in-time solution $(\rho,u)\in C(\mathbb{R}_{+};H^{2} )$ 
such that 
\begin{equation}\label{eulerdecay1}
      \begin{aligned}
          & \|u(t)\|_{L^2}+\|\partial_x (\rho-\bar{\rho},u)(t)\|_{L^2}\leq C(1+t)^{-\frac{1}{2}}\|(\rho_{0}-\bar{\rho},u_{0})\|_{H^{2}},
      \end{aligned}
  \end{equation}
where $C>0$ is a constant independent of time and $U_{0}$.
  \end{theorem}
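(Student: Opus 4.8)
\textit{Strategy of proof of Theorem \ref{ThmEuler1}.} The plan is to combine the local-in-time existence theory for symmetric hyperbolic systems with a global-in-time a priori estimate obtained by bootstrapping the linear hypocoercivity machinery of Theorem \ref{DecayThm1} through the nonlinear terms. First I would rewrite \eqref{Euler} in the symmetric form \eqref{SymEuler}, setting $\bar{c}:=2/(\gamma-1)\sqrt{P'(\bar\rho)}$ (or, for a general pressure satisfying \eqref{P}, the analogous symmetrizing variable) and working with the perturbation $(\tilde c,u):=(c-\bar c,u)$. This brings the system to the form $\partial_t U + A(U)\partial_x U = -BU$ with $A(U)=A(\bar U)+\tilde A(U)$, where $A(\bar U)$ is the constant symmetric matrix to which Theorem \ref{DecayThm1} applies and $\tilde A(U)=O(|U|)$ is smooth and vanishes at $U=0$; note the Kalman rank condition for $(A(\bar U),B)$ holds here because $A(\bar U)$ couples the two components through the off-diagonal entry $\tfrac{\gamma-1}{2}\bar c\neq 0$ (this is exactly the $p$-system's Shizuta–Kawashima structure).

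The heart of the argument is a nonlinear energy estimate in $H^2$. Local well-posedness in $C([0,T];H^2)$ for small $H^2$ data follows from the classical theory of Kato and Majda for symmetric hyperbolic systems (the quasilinear structure is standard once \eqref{P} guarantees strict hyperbolicity near $\bar\rho$). To globalize, I would run a continuity/bootstrap argument: assume $\sup_{[0,T]}\|U(t)\|_{H^2}\leq \varepsilon$ small, and show this self-improves. For the basic energy, differentiating the symmetrized system and using the symmetry of $A(U)$ one obtains, for $0\le|\alpha|\le 2$,
\begin{equation}\label{e:euler-Hs}
\frac{1}{2}\frac{d}{dt}\|\partial^\alpha U\|_{L^2}^2+\kappa\|\partial^\alpha u\|_{L^2}^2\leq C\|\partial_x U\|_{L^\infty}\|U\|_{H^2}^2\le C\varepsilon\,\|\partial_x U\|_{H^1}^2,
\end{equation}
where the commutator $[\partial^\alpha,A(U)]\partial_x U$ is controlled by Moser-type estimates and $\partial_x U\in H^1\hookrightarrow L^\infty$. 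This alone does not close because of the lack of dissipation in the $\tilde c$-component. So in parallel I would build the physical-space augmented functional of the type \eqref{def:GeneralLyaNoF0}: set
\begin{equation}\label{e:euler-lya}
\cL(t):=\|U(t)\|_{H^2}^2+\eta_0 t\|\partial_x U(t)\|_{H^1}^2+\sum_{k=1}^{1}\varepsilon_k\sum_{|\alpha|\le 1}\langle BA(\bar U)^{k-1}\partial^\alpha U,\,BA(\bar U)^{k}\partial_x\partial^\alpha U\rangle_{L^2},
\end{equation}
(here $n=2$, so the Kalman sum has a single term), and differentiate. The linear part reproduces the good inequality
\[
\frac{d}{dt}\cL(t)+c\big(\|\partial_x U(t)\|_{H^1}^2+\|u(t)\|_{H^1}^2\big)\leq \text{(nonlinear remainders)},
\]
while the nonlinear remainders — coming from $\tilde A(U)\partial_x U$ and its commutators against the cross terms $BA^{k-1}\partial^\alpha U$ — are all at least quadratic in $U$ times $\partial_x U$, hence bounded by $C\varepsilon(\|\partial_x U\|_{H^1}^2+\|u\|_{H^1}^2)$ and absorbed into the left-hand side once $\varepsilon$ is small enough. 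This yields $\cL\sim\|U\|_{H^2}^2$ nonincreasing and the differential inequality
\begin{equation}\label{e:euler-dissip}
\frac{d}{dt}\cL(t)+c\,\|\partial_x U(t)\|_{H^1}^2+c\,\|u(t)\|_{H^1}^2\leq 0,
\end{equation}
which already closes the bootstrap (global existence with $\|U\|_{H^2}\lesssim\delta_0$) and, combined with $\eta_0 t\|\partial_x U\|_{H^1}^2\le\cL(t)\le\cL(0)\lesssim\delta_0^2$, gives $\|\partial_x U(t)\|_{L^2}\lesssim(1+t)^{-1/2}\delta_0$.

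For the extra decay of the damped component $u$, I would argue exactly as in Theorem \ref{DecayThm1}: the equation for $u$ reads $\partial_t u+\lambda u = -\tfrac{\gamma-1}{2}\bar c\,\partial_x\tilde c + (\text{quadratic})$, i.e.\ $u$ solves a linearly damped ODE in time with source $\partial_x U$ plus nonlinear terms controlled by $\varepsilon\|\partial_x U\|_{L^2}$; Duhamel/Grönwall against $e^{-\lambda t}$ and the already-established $(1+t)^{-1/2}$ decay of $\|\partial_x U\|_{L^2}$ then yields $\|u(t)\|_{L^2}\lesssim(1+t)^{-1/2}\delta_0$, which together with the $\partial_x U$ bound is precisely \eqref{eulerdecay1}. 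The main obstacle is the construction and the sign/size bookkeeping of the augmented functional \eqref{e:euler-lya} at the top-order ($H^2$) level: one must verify that the quasilinear commutator terms and the variable coefficient $A(U)$ do not destroy the delicate interaction between the hyperbolic and dissipative blocks that the cross terms $\langle BA^{k-1}\partial^\alpha U,BA^k\partial_x\partial^\alpha U\rangle$ are designed to capture, and that every nonlinear contribution is genuinely of higher order so it can be absorbed by the small parameter $\varepsilon$. Because $n=2$ the Kalman structure is minimal and this bookkeeping is manageable, but it is where all the care goes; the rest is the standard continuation principle for symmetric hyperbolic systems.
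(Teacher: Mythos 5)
Your proposal is correct and follows essentially the same strategy as the paper: a physical-space hyperbolic--hypocoercivity Lyapunov functional in $H^2$ with a $t$-weighted gradient term, a continuity/bootstrap argument to globalize, and a final Grönwall step on the damped component's equation to upgrade the decay of $u$. The one technical difference is that you first symmetrize the system via the sound-speed variable $c$ (so $A(U)$ becomes genuinely symmetric and the top-order commutator estimate is the standard Kato--Majda one), whereas the paper works directly with the perturbation $(n,u)=(\rho-\bar\rho,u)$ of the non-symmetric formulation \eqref{Eulerr} and compensates for the lack of symmetry at the $H^2$ level by inserting the density-dependent weights $(\bar\rho+n)^2/P'(\bar\rho+n)$ in the top-order energy (cf.\ \eqref{eulerh2}); the paper also writes the Kalman-type cross term concretely as $\int_{\mathbb{R}}(u\,\partial_x n+\partial_x u\,\partial_{xx}^2 n)\,dx$ rather than in the abstract $\langle BA^{k-1}\partial^\alpha U,BA^k\partial_x\partial^\alpha U\rangle$ form you propose, and restricts the $t$-weight to the $L^2$ level $t\|\partial_x(n,u)\|_{L^2}^2$ rather than the full $\|\partial_x U\|_{H^1}^2$ you use. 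Your route has the mild advantage of not needing variable weights in the energy; the paper's route has the advantage of treating a general pressure law directly without an explicit change of unknowns. Both lead to the same differential inequality and the same Grönwall/Duhamel conclusion for $\|u(t)\|_{L^2}$.

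One point you should track more carefully in your bookkeeping: placing the $t$-weight on the full $\|\partial_x U(t)\|_{H^1}^2$ rather than only on $\|\partial_x U(t)\|_{L^2}^2$ means that, when you differentiate $\eta_0 t\|\partial_{xx}^2 U\|_{L^2}^2$, the quasilinear commutator remainders appear with a factor of $t$, i.e.\ you face terms of order $\eta_0\,t\,\varepsilon\,\|\partial_x U\|_{H^1}^2$. Your dissipation as written provides $c\,\|\partial_x U\|_{H^1}^2$ (no $t$-weight on the full gradient). For the absorption to work you either need a $t$-weighted cross term to upgrade the dissipation to $c(1+\eta_0 t)\|\partial_x U\|_{H^1}^2$, or (simpler, and what the paper effectively does) to restrict the $t$-weight to $t\|\partial_x U\|_{L^2}^2$ so that the $t$-weighted nonlinear remainders stay at the lower order and can be absorbed using the $t$-weighted damping $\kappa t\|\partial_x u\|_{L^2}^2$ alone. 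Since the theorem only claims $\|\partial_x(\rho-\bar\rho,u)(t)\|_{L^2}\lesssim (1+t)^{-1/2}$, the weaker $t\|\partial_x U\|_{L^2}^2$ weight is sufficient and removes this difficulty entirely.
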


The next result corresponds to nonlinear versions of Theorems \ref{ThmDecayGeneral1} and \ref{ThmDecayGeneral2} for the system \eqref{Euler}. 

\begin{theorem} \label{ThmEuler2}
Assume that the hypotheses of Theorem \ref{ThmEuler1} are satisfied and let $(\rho,u)$ be the global solution of the system \eqref{Euler} subject to the initial datum $(\rho_0,u_0)$. There exists a constant $C>0$ independent of time such that the following statements hold. 
\medbreak
$\bullet$ $1)$ If, in addition to \eqref{eulera1}, we assume $|x| u_{0}\in L^2 $. Then,  for all $t>0$ and $0<\var\ll1/4$, there exists a constant independent of $T$ such that 
\begin{align}
&\|(\rho-\rho_*)(t)\|_{L^2}\leq C (1+t)^{-\frac{1}{4}+\var},\label{errorEuler}
\end{align}
where $\rho_*$ solves the parabolic equation
\begin{equation}\label{parabolicEuler}
\left\{
\begin{aligned}
&\partial_{t}\rho_*-\frac{P'(\bar{\rho})}{\lambda}\partial_x^2 \rho_*=0,\\
&\rho_*(0,x)=\rho_0(x).
\end{aligned}
\right.
\end{equation}
If for any $0<\mu\leq 1$, we further assume  $ |x|^{\mu}(\rho_{0}-\bar{\rho})\in L^2$, then, for all $t>0$, 
 \begin{equation}\label{eulerdecay2}
  \left\{
      \begin{aligned}
                   &\|(\rho-\bar{\rho})(t)\|_{L^2}\leq C (1+t)^{-\frac{\mu}{2}},\\
          & \|u(t)\|_{L^2}+\|\partial_x (\rho-\bar{\rho},u)(t)\|_{L^2}\leq  C (1+t)^{-\frac{\mu}{2}-\frac{1}{2}}.
      \end{aligned}
      \right.
  \end{equation}

\medbreak
$\bullet$ $2)$ Let $1/2<\mu\leq 1$. If, in addition to \eqref{eulera1}, we assume $(|x|^{\mu}(\rho_{0}-\bar{\rho}),|x|^{\mu-\frac{1}{2}}u)\in L^{2} $  and $\partial_t \rho|_{t=0}=-\partial_x(\rho_{0}u_{0})$. Additionally, let $P'(\bar{\rho})\leq 1$ in the case  $1/2<\mu<1$. Then, for all $t>0$,
  \begin{equation}\label{eulerdecay3}
  \left\{
      \begin{aligned}
          &\|(\rho-\bar{\rho})(t)\|_{L^2}\leq C(1+t)^{-\mu+\frac{1}{2}},\\
          &\|u(t)\|_{L^2}+ \|\partial_x (\rho-\bar{\rho},u)(t)\|_{L^2}\leq C(1+t)^{-\mu}.
\end{aligned}
\right.
\end{equation}

\end{theorem}

\begin{remark}
Due to the nonlinear term arising from the pressure, the decay rate for the error unknown in \eqref{errorEuler} is slower than the linear case in \eqref{U1NL2}. 
\end{remark}

\bigbreak
\subsection{Application to the nonlinearly damped $p$-system}\label{mainres:NLD}

Our final result concerns nonlinear dissipative phenomena. We justify the logarithmic stability of the system \eqref{syst:EulerNLD} when the initial datum belongs to logarithmically weighted Sobolev spaces.

\begin{theorem}\label{ThmEulerNLD}
Let $1<r<3$ and suppose that  $(\rho_{0},u_{0})\in  H^1 $. Then, for all $t>0$, the system \eqref{syst:EulerNLD} with the initial datum $(\rho_0,u_0)$ admits a unique global-in-time solution $ (\rho,u)\in C(\mathbb{R}_{+};H^1 )$ satisfying
\begin{equation}
\begin{aligned}
&\| (\rho,u)(t)\|_{H^1}^2+\int_{0}^{t}\Big(\|u(\tau)\|_{L^{r+1}}^{r+1}+\|(\partial_x \rho^{\frac{r+1}{2}},\partial_x u^{\frac{r+1}{2}})(\tau)\|_{L^2}^2\Big)\,d\tau\\
&\quad\leq C\| (\rho_{0},u_{0})\|_{H^1}^2,
\end{aligned}
\end{equation}
where $C>0$ is a constant independent of time and $(\rho_{0},u_{0})$.
\smallbreak
 Furthermore, let $q>0$ and suppose
    \begin{align}
  \rho_0 \in L^1, \quad \quad \log^{q}{(1+ |x|)} (\rho_{0},u_{0})\in L^2,\label{NDw}
    \end{align} 
    and $\partial_{t}\rho|_{t=0}=-\partial_x u_{0}$. Then,  for all $t>0$,
\begin{equation}
\begin{aligned}\label{logdecay}
&\| (\rho,u)(t)\|_{L^2}\leq \frac{C_{q}}{\log^{q}(1+t)},
\end{aligned}
\end{equation}
 where $ C_{q}>0$ is a constant independent of time.
\end{theorem}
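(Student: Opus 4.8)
\textbf{Proof proposal for Theorem \ref{ThmEulerNLD}.}

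The plan is to split the argument into two independent parts: (i) the global well-posedness and basic energy estimate, and (ii) the logarithmic decay rate. For part (i), I would work directly with system \eqref{syst:EulerNLD}. Testing the first equation against $\rho$ and the second against $u$ and integrating by parts, the cross terms $\int \partial_x u \cdot \rho\,dx$ and $\int \partial_x \rho \cdot u\,dx$ cancel, leaving the basic energy law $\frac{1}{2}\frac{d}{dt}\|(\rho,u)\|_{L^2}^2 + \|u\|_{L^{r+1}}^{r+1} = 0$. For the $\dot H^1$ bound, I would differentiate the system in $x$, test against $(\partial_x\rho,\partial_x u)$, and handle the nonlinear damping term $\int \partial_x(|u|^{r-1}u)\partial_x u\,dx = r\int |u|^{r-1}|\partial_x u|^2\,dx \geq 0$, which has a favorable sign and controls $\|\partial_x u^{(r+1)/2}\|_{L^2}^2$ up to constants (and similarly after a further manipulation one recovers $\|\partial_x \rho^{(r+1)/2}\|_{L^2}^2$ via the equation for $\partial_t\rho$). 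The range $1<r<3$ should be exactly what is needed to close the nonlinear estimates without loss; a continuity/bootstrap argument on $[0,T]$ with $T$ maximal then gives global existence in $C(\mathbb{R}_+;H^1)$.

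For part (ii), the logarithmic decay, I would follow the wave-equation strategy already used for Theorem \ref{ThmDecayGeneral2}, adapted to handle the nonlinear damping. Writing $\rho = \partial_x W$ (using $\rho_0\in L^1$ to make $W$ well-defined as an antiderivative decaying at infinity) and $u = -\partial_t W$, system \eqref{syst:EulerNLD} becomes a damped wave equation $\partial_{tt}^2 W - \partial_{xx}^2 W + |\partial_t W|^{r-1}\partial_t W = 0$. The key is to run a hypocoercivity estimate with a carefully chosen \emph{logarithmic} space-time weight of the form $(\log(2+t+|x|))^{2q}$ (or perhaps $(\log(e+t+|x|))^{2q}$ times a correction), testing the wave equation against $\partial_t W$ and against $W$ with such weights. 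The nonlinear damping contributes a term $\int \phi(t,x)|\partial_t W|^{r+1}\,dx$ with good sign, and the point is that the weight's logarithmic growth is slow enough that the commutator terms $\partial_t \phi$ and $\partial_x \phi$ generated by the weight are controlled by the dissipation — this is where the restriction to logarithmic (rather than polynomial) weights and the hypothesis \eqref{NDw} enter. This should yield a bound of the form $\int_{\mathbb{R}} (\log(2+t+|x|))^{2q}(|\partial_t W|^2 + |\partial_x W|^2)\,dx \lesssim 1$, and since $(\log(2+t))^{2q} \leq (\log(2+t+|x|))^{2q}$ pointwise, one extracts $\|(\rho,u)(t)\|_{L^2}^2 \lesssim (\log(1+t))^{-2q}$, which is \eqref{logdecay}.

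The main obstacle I anticipate is the nonlinear damping term in the weighted wave estimate: unlike the linear case $|u|^{r-1}u \rightsquigarrow Du$, here the damping is degenerate (it vanishes where $u=0$) and superlinear or sublinear depending on whether $r>1$ near $u=0$, so one cannot directly absorb the weight-commutator error terms into it. I expect the resolution is an interpolation argument: estimate the weight-commutator terms, which are of the form $\int \frac{(\log(\cdots))^{2q-1}}{2+t+|x|}|\partial_t W||\partial_x W|\,dx$, by splitting into the region where $|u|$ is large (controlled by the nonlinear dissipation via Hölder with exponents $r+1$ and $(r+1)/r$) and the region where $|u|$ is small (absorbed using the decay already known, or the basic energy bound, together with the extra smallness coming from the factor $(2+t+|x|)^{-1}$). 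The compatibility between the Hölder exponents, the range $1<r<3$, and the arbitrary power $q$ of the logarithm will be the delicate bookkeeping. A secondary technical point is justifying the change of variables $\rho = \partial_x W$ rigorously at the level of the nonlinear solution and propagating enough regularity/integrability for $W$; the hypothesis $\rho_0\in L^1$ together with conservation of $\int\rho\,dx = 0$ (which follows from the first equation) should make this work.
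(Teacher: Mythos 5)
Your overall architecture matches the paper's: a basic energy/dissipation estimate for global existence followed by a wave reformulation $W(x,t)=\int_{-\infty}^x\rho\,dy$, $u=-\partial_t W$, $\rho=\partial_x W$, and a hypocoercivity estimate with logarithmic space-time weights $\varphi_1(s)=\log^{2q}(a+s)$. However, there are several concrete gaps and misattributions.

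\textbf{On Part (i).} The $L^2$ and $\dot H^1$ identities you state are correct, but the passage you dismiss as ``a further manipulation'' that recovers $\|\partial_x\rho^{(r+1)/2}\|_{L^2}^2$ is exactly the nontrivial hypocoercive step. The paper adds a cross corrector $\eta_2\int\frac1r|u|^{r-1}u\,\partial_x\rho\,dx$ to the energy, computes its time derivative (which produces $\int|u|^{r-1}|\partial_x\rho|^2 - \int|u|^{2r-2}u\,\partial_x\rho - \int|u|^{r-1}|\partial_x u|^2$), and then verifies that for $\eta_2$ small the resulting Lyapunov functional $\mathcal W_*$ is equivalent to $\|(\rho,u)\|_{H^1}^2$ and the dissipation functional is coercive on $\int|u|^{r-1}(|u|^2+|\partial_x\rho|^2+|\partial_x u|^2)\,dx$. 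This requires the uniform $L^\infty$ bound on $u$ obtained from the $H^1$ energy. You also state that $1<r<3$ ``should be exactly what is needed to close the nonlinear estimates'' for global existence; that is not where the restriction enters — the global existence part works for any $r>1$, and $r<3$ enters only in the decay step (see below).

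\textbf{On Part (ii).} Your anticipated obstruction is not the one that actually arises, and the machinery you propose to overcome it is somewhat off. First, the commutator term you identify, $\int\frac{(\log(\cdot))^{2q-1}}{2+t+|x|}|\partial_t W||\partial_x W|\,dx$, does not appear: for system \eqref{syst:EulerNLD} the damped wave equation is $\partial_{tt}^2 w-\partial_{xx}^2 w+|\partial_t w|^{r-1}\partial_t w=0$ with no $\partial_t\partial_x w$ cross term (there is no advection, $A_{1,1}=A_{2,2}=0$). The terms that do need controlling are (a) $\int\varphi_1'|\partial_t w|^2$ from testing against $\varphi_1\partial_t w$, (b) $\int\varphi_1'\,|\partial_t w|^{r-1}\partial_t w\cdot w$ from testing against $\varphi_1' w$. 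Second, the region-splitting/interpolation scheme you describe is unnecessary: the paper handles (a) by a single global H\"older--Young step, $\int\varphi_1'|\partial_t w|^2\le\frac14\int\varphi_1|\partial_t w|^{r+1}+C\int\frac{|\varphi_1'|^{\frac{r+1}{r-1}}}{\varphi_1^{\frac{2}{r-1}}}$, where the crucial observation is that the error is a solution-independent weight integral which is finite on $\mathbb R_+\times\mathbb R$ precisely when $\frac{r+1}{r-1}>2$, i.e. $1<r<3$. This is the true role of the hypothesis $r<3$. Third, and more importantly, you do not anticipate the need for a second weight $\varphi_2$ and the auxiliary weighted estimate $\frac{d}{dt}\int\varphi_2|w|^{r+1}-\int\varphi_2'|w|^{r+1}=-(r+1)\int\varphi_2|w|^{r-1}w\,\partial_t w$; this is essential because after Young's inequality the cross term (b) produces $\int\frac{|\varphi_1'|^{r+1}}{\varphi_1^r}|w|^{r+1}$, which must be absorbed by the $-\varphi_2'|w|^{r+1}$ dissipation supplied by the auxiliary estimate (with $\varphi_2(s)=\log^{2q-r+1}(a+s)/|a+s|^r$). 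Without this the argument does not close. Finally, your use of $\rho_0\in L^1$ is essentially correct (it bounds $|W(x,0)|\le\|\rho_0\|_{L^1}$ so that $\mathcal W_\varphi(0)<\infty$), but note also that $\log^q(1+|x|)(\rho_0,u_0)\in L^2$ is what controls the leading $\varphi_1(|x|)(|\partial_t w|^2+|\partial_x w|^2)$ contribution to the initial energy.
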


\vspace{1mm}

\begin{remark}

Some remarks are in order.

\begin{itemize}
\item The restriction $1<r<3$ appears naturally in our computations, see inequality \eqref{conditionr3}. It comes from the fact that the nonlinear dissipation becomes weaker and weaker when $r$ grows. Moreover, it is known that the solutions of the nonlinearly damped wave equation decay in time for  $1<r<3$ and do not decay if $r>3$  (cf. \cite{daoulatli1,MochizukiMotai,nakao}). For the system \eqref{syst:EulerNLD}, the asymptotic behaviour in the case $r=3$ is open.

   \item In contrast with the linear damping setting ($r=1$), it would be difficult to handle the nonlinear damping term $|u|^{r-1}u$ via Fourier analysis, since this would result in a non-local nonlinear term mixing all frequency components. 

   \item  Our proof is inspired by the hypocoercive arguments developed in the proof of Theorem \ref{ThmDecayGeneral2} and the results from \cite{MochizukiMotai}, see Section \ref{sec:EulerNonlinD} for more details.

    \end{itemize}
\end{remark}

\subsection{Conclusions}
In this paper, we lay the foundations for  \textit{hyperbolic hypercoercivity} without resorting to Fourier analysis. This enables us to extend the analysis of partially dissipative hyperbolic systems to previously unexplored contexts. Furthermore, our approach provides a guideline for future research on partially dissipative systems in many contexts, such as initial boundary value problems, more general nonlinearly damped systems, space-dependent hyperbolic matrices, and numerical approximation schemes. Interested readers can refer to Section \ref{sec:ext} for more details on these possible extensions.
\bigbreak

\section{Proof of Theorem \ref{DecayThm1}} \label{sec:3}

In this section, we prove Theorem \ref{DecayThm1} by employing pure energy arguments and avoiding the use of the Fourier transform. We introduce the Lyapunov functional
\begin{equation}\label{def:GeneralLyaNoF}
\begin{aligned}
  \mathcal{L}(t):= \|U(t)\|_{H^1}^2+\eta_{0} t\|\partial_x U(t)\|_{L^2}^2+\cI(t),
  \end{aligned}
  \end{equation}
where the corrector term  $\cI(t)$ is defined by
\begin{align}
&\cI(t):= \sum_{k=1}^{n-1} \varepsilon_k \bigl(BA^{k-1}U,BA^k\partial_xU\bigr)_{L^2},\label{I}
\end{align}
with positive constants $\eta_{0}$ and $\var_{i}$, $i=1,2,...,k-1$, to be determined later.

\subsection{Time-derivative of $\mathcal{L}$}
\label{subsectionL}
\subsubsection{Energy estimates}
Standard energy estimates for \eqref{GE} lead to
\begin{align}
    \dfrac{d}{dt} \|U(t)\|_{L^2}^2+2\bigl(DU_2,U_2\bigl)_{L^2}&=0, \nonumber\\
    \dfrac{d}{dt} \|\partial_x U(t)\|_{L^2}^2+2\bigl(D \partial_x U_2,\partial_x U_2\bigl)_{L^2}&=0,\nonumber
     \\
    \dfrac{d}{dt}\big(t \|\partial_xU(t)\|_{L^2}^2\big)+2t\bigl(D \partial_x U_2,\partial_x U_2\bigl)_{L^2}&=\|\partial_xU(t)\|_{L^2}^2,\nonumber
\end{align}
which, together with the strong dissipativity condition \eqref{Strong Dissipativity} of $D$, gives
\begin{equation}
\begin{aligned}
    &\dfrac{d}{dt}\big(\mathcal{L}(t)-\cI(t) \big)+2\kappa \|U_2(t)\|_{L^2}^2+2\kappa(1+ \eta_{0} t) \|\partial_xU_2(t)\|_{L^2}^2\leq \eta_{0}\|\partial_xU(t)\|_{L^2}^2.\label{L}
\end{aligned}
\end{equation}
In \eqref{L}, dissipative estimates for $\partial_xU$ in $L^2 $ are missing. They are recovered in the next subsection thanks to the corrector term $\cI(t)$ in the Lyapunov functional \eqref{def:GeneralLyaNoF}.

\subsubsection{Estimation of the corrector term}
Differentiating $\cI(t)$ in time, we obtain
\begin{equation}\label{TimederivativeCorrector}
\begin{aligned}
  \frac d{dt}\cI(t) + \sum_{k=1}^{n-1} \varepsilon_k\|BA^k\partial_xU(t)\|_{L^2}^2 
 =& -\sum_{k=1}^{n-1} \varepsilon_k\bigl(BA^{k-1} BU,BA^k\partial_xU\bigr)_{L^2}
  \\&-\sum_{k=1}^{n-1} \varepsilon_k \bigl(BA^{k-1}U,BA^k B \partial_xU\bigr)_{L^2}
  \\&-\sum_{k=1}^{n-1} \varepsilon_k \bigl(BA^{k-1}U, BA^{k+1}\partial_{x}^2U\bigr)_{L^2}\cdotp
\end{aligned}
\end{equation}
Thanks to Proposition \ref{PropBZ}, the second term on the left-hand side of \eqref{TimederivativeCorrector} leads to time-decay information for $\partial_xU$. To deal with the remainder terms, we proceed as in \cite{BZ,CBD2,Handbook} with some adaptations needed to bypass the Fourier analysis. 

\begin{lemma}[Time-derivative of $\cI$]\label{lem:corrector}
For any positive constant $\var_{0}$, there exists a sequence $\{\var_{k}\}_{k=1,...,n-1}$ of small positive constants such that
\begin{align}
\dfrac{d}{dt}\cI(t) +\frac{1}{2}\sum_{k=1}^{n-1} \varepsilon_k\|BA^k\partial_xU(t)\|_{L^2}^2 \leq \var_{0}\|U_2(t)\|_{L^2}^2+\var_{0}\| \partial_xU_2(t)\|_{L^2}^2.\label{ddtI}
\end{align}
\end{lemma}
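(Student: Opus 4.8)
The plan is to estimate each of the three remainder terms on the right-hand side of \eqref{TimederivativeCorrector} and absorb them, either into the good dissipative term $\sum_k \varepsilon_k\|BA^k\d_xU\|_{L^2}^2$ coming from the left-hand side, or into the quantities $\|U_2\|_{L^2}^2$ and $\|\d_xU_2\|_{L^2}^2$ allowed on the right-hand side of \eqref{ddtI}. The key structural fact, which I will use repeatedly, is that $B$ has the block form \eqref{BD}, so $Bv$ depends only on the $U_2$-component of $v$ and $|Bv|\lesssim |v_2|$; consequently $BA^{k-1}Bv$, $BA^kBv$, etc., can all be bounded (pointwise in $x$) by $|v_2|$ up to constants depending only on $A$. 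The terms in \eqref{TimederivativeCorrector} involving a factor $B$ applied to $U$ or $\d_xU$ are therefore controlled by $\|U_2\|_{L^2}$ or $\|\d_xU_2\|_{L^2}$, which is exactly what the statement permits.

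First I would handle the \emph{first} remainder term $-\sum_k\varepsilon_k(BA^{k-1}BU,BA^k\d_xU)_{L^2}$: since $BU$ contributes only through $U_2$, Cauchy–Schwarz gives a bound $\lesssim \sum_k \varepsilon_k \|U_2\|_{L^2}\|BA^k\d_xU\|_{L^2}$, and Young's inequality splits this into $\frac14\sum_k\varepsilon_k\|BA^k\d_xU\|_{L^2}^2 + C\big(\sum_k\varepsilon_k\big)\|U_2\|_{L^2}^2$. The \emph{second} remainder term $-\sum_k\varepsilon_k(BA^{k-1}U,BA^kB\d_xU)_{L^2}$ is treated symmetrically: the factor $B\d_xU$ depends only on $\d_xU_2$, so after Cauchy–Schwarz and Young we get $\lesssim \sum_k \varepsilon_k \|BA^{k-1}U\|_{L^2}\|\d_xU_2\|_{L^2}$, and since $\|BA^{k-1}U\|_{L^2}\lesssim \|U\|_{L^2}$... — here one must be slightly careful, as a bare $\|U\|_{L^2}$ is not available on the right-hand side. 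The fix is that, because $BA^{k-1}$ already starts with $B$, actually $\|BA^{k-1}U\|_{L^2}$ is not controlled by $\|U_2\|_{L^2}$ in general when $k\ge 2$; however $BA^k B\d_x U$ still only sees $\d_x U_2$, and one can instead integrate by parts to move the $\d_x$ off and rewrite this term so that it pairs $BA^{k-1}\d_xU$ (or a $B$-hit of it) against quantities controlled by $\|U_2\|$, $\|\d_xU_2\|$; alternatively, and more simply, bound $\|BA^{k-1}U\|_{L^2}\le \|U\|_{H^1}$ is \emph{not} allowed, so one uses instead that for the pairing structure the relevant norm is $\|B A^{k-1} \d_x U\|_{L^2}$ after an integration by parts, falling back into the already-controlled family. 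I would organize this so that this term also produces $\tfrac14\sum_k\varepsilon_k\|BA^k\d_xU\|_{L^2}^2 + C(\cdots)(\|U_2\|_{L^2}^2+\|\d_xU_2\|_{L^2}^2)$.

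The \emph{third} remainder term $-\sum_k\varepsilon_k(BA^{k-1}U,BA^{k+1}\d_{xx}^2U)_{L^2}$ is the main obstacle, because it contains two $x$-derivatives and there is no term with $\|\d_{xx}^2U\|$ available anywhere to absorb it. The resolution, exactly as in \cite{BZ}, is to integrate by parts in $x$: one derivative is moved from $\d_{xx}^2U$ onto $BA^{k-1}U$, turning the term into $\sum_k\varepsilon_k(BA^{k-1}\d_xU,BA^{k+1}\d_xU)_{L^2}$ (boundary terms vanish by the $H^1$-decay/density). This is now a \emph{quadratic form in $\d_xU$ built from the operators $BA^j$}, and one controls it by $C\sum_{k}\varepsilon_k\|BA^k\d_xU\|_{L^2}\|BA^{k+1}\d_xU\|_{L^2}$; the point — and this is where the choice of the $\varepsilon_k$ enters decisively — is that the ``diagonal'' good term at level $k+1$ can absorb the cross term provided $\varepsilon_{k+1}\ll \varepsilon_k$, i.e. one fixes $\varepsilon_1$ first (any value $\le$ some constant times $\varepsilon_0$ works for the first-term estimate), then chooses $\varepsilon_2,\dots,\varepsilon_{n-1}$ successively small enough that at each step $\varepsilon_{k+1}$-quantities dominate the $\varepsilon_k$-cross-contributions. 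After these three estimates, collecting the three copies of $\tfrac14\sum_k\varepsilon_k\|BA^k\d_xU\|_{L^2}^2$... — I would instead split the left-hand good term into, say, $\tfrac12 + \tfrac12$, using $\tfrac12$ to run the iterative absorption of the second-derivative term and the remaining $\tfrac12$ to mop up the first two remainder terms after the $\varepsilon_k$ have been fixed small enough that $C\sum_k\varepsilon_k \le \varepsilon_0$. Tracking constants carefully, this yields precisely \eqref{ddtI}. The genuinely delicate point, worth stating explicitly in the write-up, is the ordering of the choices: $\varepsilon_0$ is given, then $\varepsilon_1,\dots,\varepsilon_{n-1}$ are chosen in this order, each depending only on $\|A\|$, $\|B\|$, $\varepsilon_0$ and the previously chosen $\varepsilon_j$, which is legitimate because the sum is finite.
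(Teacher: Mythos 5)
Your plan tracks the paper's proof closely: the paper also uses $B U=D U_2$, $B\d_x U=D\d_x U_2$ for the $B$-loaded factors, integrates by parts to remove the extra derivative in both the $(BA^{k-1}U,BA^{k}B\d_xU)$ terms ($k\ge2$) and the $(BA^{k-1}U,BA^{k+1}\d^2_{xx}U)$ terms, and then runs a cascading absorption against the dissipative quantities $\|BA^{j}\d_xU\|_{L^2}^2$. However, there is a genuine gap at the top index $k=n-1$ of the third remainder term. After integration by parts it becomes $\varepsilon_{n-1}\bigl(BA^{n-2}\d_xU,\,BA^{n}\d_xU\bigr)_{L^2}$, and $BA^{n}\d_xU$ is \emph{not} one of the controlled quantities $\{BA^{j}\d_xU\}_{j=0}^{n-1}$ appearing on the left-hand side; it is also not bounded by $\|\d_xU_2\|_{L^2}$, since $B$ appears to the left of $A^n$ and therefore does not kill the $U_1$-part of $A^n\d_xU$. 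In your phrasing, the ``diagonal good term at level $k+1$'' simply does not exist when $k+1=n$, so the absorption cannot be performed. The paper closes this by invoking the Cayley--Hamilton theorem, writing $A^n=\sum_{j=0}^{n-1}c_*^{j}A^{j}$ so that $BA^{n}\d_xU$ becomes a finite linear combination of admissible $BA^{j}\d_xU$ (the $j=0$ piece producing exactly the $\|\d_xU_2\|_{L^2}^2$ contribution on the right of \eqref{ddtI}); this step is essential and is missing from your proposal.

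A secondary imprecision concerns the selection of the $\varepsilon_k$. The absorption of the cross term from the integrated-by-parts third remainder at a generic index $k\le n-2$ requires, after Young, a condition of the form $C\varepsilon_k^2\le \tfrac18\varepsilon_{k-1}\varepsilon_{k+1}$, which is a \emph{two-sided} constraint on $\varepsilon_{k+1}$: it must be small compared to $\varepsilon_k$ (for the other estimates), but not so small that $\varepsilon_k^2/\varepsilon_{k-1}$ exceeds it. Your prescription ``choose $\varepsilon_{k+1}\ll\varepsilon_k$ so that $\varepsilon_{k+1}$-quantities dominate'' does not capture this lower bound. The paper sidesteps the issue by the ansatz $\varepsilon_k=\varepsilon^{m_k}$, with exponents satisfying a strict mid-point concavity $m_k\ge \tfrac12(m_{k-1}+m_{k+1})+\delta$, so that all constraints reduce to taking a single parameter $\varepsilon$ sufficiently small. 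A purely sequential choice can be made to work, but only if at each step $\varepsilon_{k+1}$ is placed in a window of the form $\bigl[c\,\varepsilon_k^2/\varepsilon_{k-1},\ \varepsilon_k/C\bigr]$; this should be stated. (Also a small typo: the Cauchy--Schwarz bound on the integrated-by-parts third term should read $\|BA^{k-1}\d_xU\|_{L^2}\|BA^{k+1}\d_xU\|_{L^2}$, not $\|BA^{k}\d_xU\|_{L^2}\|BA^{k+1}\d_xU\|_{L^2}$.)
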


\begin{proof}

To begin with, we fix a positive constant $\varepsilon_0$ and estimate the terms in the right-hand side of \eqref{TimederivativeCorrector} as follows. 
\begin{itemize}
\item The terms $\cI^1_k:= \varepsilon_k\bigl(BA^{k-1}B  U, BA^k\partial_x U\bigr)$
with $k\in\{1,\cdots, n-1\}$: Due to $B U=D  U_{2}$ and the fact that the matrices $A$, $D$ are bounded, we obtain
\begin{equation}\nonumber
\begin{aligned}
 |\cI^1_k| &\leq C\varepsilon_k\|D U_{2}(t)\|_{L^2} \|BA^k \partial_x U(t)\|_{L^2} \\
&\leq \frac{\varepsilon_0}{4n}{\| U_{2}(t)\|_{L^2}^2}+\frac{C\varepsilon_k^2}{\varepsilon_0}\| BA^k   \partial_xU(t)\|_{L^2}^2.
\end{aligned}
\end{equation}
\item  The term $\cI^2_1:= \varepsilon_1\bigl(BU, BA B \partial_xU\bigr)_{L^2}$: One has
\begin{equation}\nonumber
\begin{aligned}
|\cI^2_1| &\leq C\varepsilon_1 \|DU_{2}(t)\|_{L^2}\| D\partial_xU_{2}(t)\|_{L^2}\\
&\leq \frac{\varepsilon_0}{4n} \|U_{2}(t)\|_{L^2}^2+ \frac{C\varepsilon_1^2}{\var_{0}} \|\partial_xU_{2}(t)\|_{L^2}^2.
\end{aligned}
\end{equation}
\item The terms  $\cI^2_k:=\varepsilon_k\bigl(BA^{k-1}  U, BA^k B \partial_xU\bigr)_{L^2}$ with  $k\in\{2,\cdots, n-1\}$ if $n\geq3$: We deduce, after integrating by parts, that
\[\begin{aligned}
 |\cI^2_k|&=\varepsilon_k \big|\bigl(BA^{k-1}  \partial_x U, BA^k B U\bigr)_{L^2} \big|\\
 &\leq C\varepsilon_k \|BA^{k-1} \partial_x U(t)\|_{L^2}\|B U(t)\|_{L^2}\\
&\leq \frac{\varepsilon_0}{4n}{\|  U_{2}(t)\|_{L^2}^2}+\frac{C\varepsilon_{k-1}^2}{\varepsilon_0}\| BA^{k-1}   \partial_xU(t)\|_{L^2}^2.
\end{aligned}\]
\item The terms $\cI^3_k:= \varepsilon_k \bigl( BA^{k-1}  U,  BA^{k+1}  \partial_{x}^2U\bigr)_{L^2}$ with $k\in\{1,\cdots, n-2\}$ if $n\geq3$: A similar argument yields
\begin{equation}\nonumber
\begin{aligned}
\cI^3_k&=\varepsilon_k \big| \bigl( BA^{k-1}  \partial_x U,  BA^{k+1}  \partial_{x} U\bigr)_{L^2} \big|\\
&\leq \frac{\varepsilon_{k-1}}{8}  \|BA^{k-1}  \partial_xU(t)\|_{L^2}^2 +\frac{C\varepsilon_k^2}{\varepsilon_{k-1}}  \| BA^{k+1} \partial_x U(t)\|_{L^2}^2. 
\end{aligned}
\end{equation}
\item  The term $\cI^3_{n-1}:= \varepsilon_{n-1} \bigl( BA^{n-2}  U,  BA^{n}  \partial_{x}^2U\bigr)_{L^2}$:
Owing to the Cayley-Hamilton theorem, there exist coefficients
$c_{*}^j$ ($j=0,2,...,n-1$) such that
\begin{align}
    &A^n =\sum_{j=0}^{n-1}  c_{*}^j A^j.\label{CH}
\end{align}
Consequently, one gets
\begin{equation}\nonumber
\begin{aligned}
\hspace{-0.2cm}|\cI^3_{n-1}|&\leq \varepsilon_{n-1}\sum_{j=0}^{n-1}c_{*}^j \| BA^{n-2}  \partial_xU(t)\|_{L^2}
\|BA^j  \partial_xU(t)\|_{L^2} \\
&\leq  \frac{\varepsilon_{n-2}}{8}\| BA^{n-2}  \partial_xU(t)\|_{L^2}^2+\sum_{j=1}^{n-1} \frac{C\varepsilon_{n-1}^2}{\varepsilon_{n-2}}\| BA^j  \partial_xU(t)\|_{L^2}^2+\frac{C\varepsilon_{n-1}^2}{\varepsilon_{n-2}} \|\partial_x U_{2}(t)\|_{L^2}^2 .
\end{aligned} 
\end{equation}
\end{itemize}
In order to absorb the right-hand side terms of $\mathcal{I}_k^1$ and $\mathcal{I}_k^2$ by the left-hand side of \eqref{TimederivativeCorrector}, we take the constant  $\varepsilon_k$ small enough so that 
\begin{equation}\label{eq:e1}
C\varepsilon_1^2\leq \frac{\var_{0}^2}{8},\quad\quad C\varepsilon_k^2\leq \frac{\varepsilon_k\var_{0}}{8},\quad\quad k=1,2,...,n-1. \end{equation}
To handle the above estimates of  $\cI^3_k$ with $k=1,2,...,n-2$, one may let
\begin{equation}\label{eq:e11}
 C\varepsilon_k^2\leq \frac{1}{8}\varepsilon_{k-1}\varepsilon_{k+1},\quad\quad k=1,2,...,n-2\quad\text{if}~n\geq3.
 \end{equation} 
In addition, to handle the term $\cI^3_{n-1}$, we assume 
\begin{equation}\label{eq:e2}
C\varepsilon_{n-1}^2\leq \frac{1}{8}\varepsilon_{j}\varepsilon_{n-2},\qquad j=0,\cdots,n-1.
\end{equation}
Clearly, the inequality \eqref{ddtI} holds if we find $\varepsilon_1,\cdots,\varepsilon_{n-1}$ fulfilling 
\eqref{eq:e1} and \eqref{eq:e2}. 
As in \cite{BZ}, for $k=1,\cdots,n-2$, one can take  $\varepsilon_k=\varepsilon^{m_k}$ with some suitably small constant  $\varepsilon\leq \var_{0}$  and 
$m_1,\cdots,m_{n-1}$ satisfying for some $\delta>0$ (that can be taken arbitrarily small): 
\[m_{k}>1,\quad m_k\geq \frac{m_{k-1}+m_{k+1}}2+\delta \quad \text{and} \quad
m_{n-1}\geq\frac{m_{k}+m_{n-2}}2+ \delta.\]
This concludes the proof of Lemma \ref{lem:corrector}.
\end{proof}

\subsection{Decay of the $\dot{H}^1$ norm}
First, we fix suitably small $\varepsilon_{k}, k=1,2,...,n-1$, such that \eqref{ddtI} holds and
 \begin{align}
 & \mathcal{L}(t)\sim \|U(t)\|_{H^1}^2+\eta_{0} t\|\partial_x U(t)\|_{L^2}^2. \label{Lsim}
 \end{align}
Combining the Lyapunov inequality \eqref{L} and the estimate \eqref{ddtI} of  the corrector term, we obtain
\begin{equation}\label{mmm}
\begin{aligned}
    &\dfrac{d}{dt}\mathcal{L}(t)+\kappa \|U_2(t)\|_{L^2}^2+\kappa(1+ 2 \eta_{0} t) \|\partial_xU_2(t)\|_{L^2}^2+\frac{1}{2}\sum_{k=1}^{n-1} \varepsilon_k\|BA^k\partial_xU(t)\|_{L^2}^2\\
    &\leq \eta_{0}\|\partial_xU(t)\|_{L^2}^2+\var_{0}\|U_2(t)\|_{L^2}^2+\var_{0}\|\partial_xU_2(t)\|_{L^2}^2.
\end{aligned}
\end{equation}
In view of Proposition \ref{PropBZ}, it holds that
\[
\kappa\|\partial_xU_2(t)\|_{L^2}^2+\sum_{k=1}^{n-1} \varepsilon_k\|BA^k\partial_xU(t)\|_{L^2}^2\geq \frac{\var_{*}}{C_{K}}\|\partial_x U(t)\|_{L^2}^2
\]
with $\var_{*}:=\min\{\kappa, \var_{1}, \var_{1},...,\var_{n-1}\}$ and $C_{K}>0$ a constant depending only on $(A,B)$ and $n$.  Therefore, in order to ensure the coercivity of \eqref{mmm}, we adjust the coefficients appropriately as
\[
0<\eta_{0}<\frac{\var_{*}}{4C_{K}} ,\quad\quad 0<\var_{0}<\frac{\kappa}{2}
\]
such that 
\begin{align}
\dfrac{d}{dt}\mathcal{L}(t) +\frac{\kappa}{2} \| U_2(t)\|_{L^2}^2+\kappa (\frac{1}{2}+\eta_{0} t) \| \partial_x U_2(t)\|_{L^2}^2+\frac{\var_{*}}{4C_{K}}\|\partial_x U(t)\|_{L^2}^2\leq 0.\label{Lineq}
\end{align} 
Therefore, by \eqref{Lsim} and \eqref{Lineq}, we have
\begin{equation}
\begin{aligned}
&\|U(t)\|_{L^2}+(1+t)^{\frac12}\|\partial_x U(t)\|_{L^2}\leq C\|U_{0}\|_{H^1}.\label{uptoU1}
\end{aligned}
\end{equation}

\subsection{Improved decay for the damped component}

Taking the inner product of $\eqref{GE}_{2}$ with $U_2$ and using the property \eqref{Strong Dissipativity}, we get
\begin{equation}\label{ddtU22}
    \begin{aligned}
    &\frac{d}{dt}\|U_2(t)\|_{L^2}^2+2\kappa\|U_2(t)\|_{L^2}^2\leq C \|\partial_{x}U(t)\|_{L^2}\|U_2(t)\|_{L^2}.
    \end{aligned}
\end{equation}
Dividing the above inequality \eqref{ddtU22} by $\sqrt{\|U_2(t)\|_{L^2}^2+\var}$, employing Gr\"onwall's inequality and then letting $\var\rightarrow0$, we have
\begin{align}
&\|U_2(t)\|_{L^2}\leq e^{-\kappa t}\|U_{2,0}\|_{L^2}+C\int_{0}^{t}e^{-\kappa(t-\tau)} \|\partial_{x}U(\tau)\|_{L^2} \,d\tau.\label{this111}
\end{align} Together with the time-decay estimates \eqref{uptoU1} of $\partial_x U$, this leads to
\begin{equation}
\begin{aligned}
\|U_2(t)\|_{L^2}&\leq e^{-\kappa t}\|U_{2,0}\|_{L^2}+C\|U_0\|_{H^1}\int_{0}^{t}e^{-\kappa (t-\tau)}(1+\tau)^{-\frac{1}{2}}\,d\tau\\
&\leq C(1+t)^{-\frac{1}{2}}\|U_0\|_{H^1},\nonumber
\end{aligned}
\end{equation}
which concludes the proof of Theorem \ref{DecayThm1}. \qed

\begin{remark}
    For $t=1$, the computations  in this section also lead to \begin{align}\label{estimateThm1}
\|U(t)\|_{H^1}^2+\int_{0}^{t}\big(\|U_2(\tau)\|_{L^2}^2+\|\partial_x U(\tau)\|_{L^2}^2\big)\,d\tau \leq C\|U_0\|_{H^1}^2,
\end{align}
which will be useful in the following sections.
\end{remark}

\section{Faster time-decay rates: Proof of Theorem \ref{ThmDecayGeneral1}} \label{sec:4}

\subsection{Time-decay estimates for the parabolic system in weighted Sobolev spaces} \label{subsec:4}

This section aims to capture time-decay rates for $U_1$ in $L^2 $. Our method allows us to recover faster decay rates compared to the decay $(1+t)^{-\frac{1}{4}}$ obtained in \cite{BZ,BHN,Kawashimadoctoral,KYDecay}. In these references, inspired by the work of Matsumura and Nishida \cite{MatsumuraNishida}, the authors assumed $L^1$ regularity on the initial data to recover decay in low frequencies. Here, to avoid using the Fourier transform, we decompose \eqref{Systlin} into a pure parabolic system and a hyperbolic remainder part, and perform hypocoercivity estimates for space-weighted initial data.

First, we provide time-decay estimates for the equation associated with the large-time parabolic profile of the hyperbolic system:
\begin{equation}\label{parabolic2}
\left\{
\begin{aligned}
&\partial_{t}N-A_{1,2}D^{-1}A_{2,1} \partial^2_{x}N=0,\\
&N(x,0)=U_{1,0}(x).
\end{aligned}
\right.
\end{equation}
We recall a classical result ensuring that the operator $-A_{1,2}D^{-1}A_{2,1}$ is strongly elliptic.

\begin{lemma}[\cite{CBD3,Yong0,Yong}]
\label{l:parabolic}
Assume that $ A_{1,1}=0$.
Then, the following assertions are equivalent: \begin{itemize}
    \item $(A,B)$ satisfies the {\textrm(SK)} or the Kalman rank condition. \smallbreak
    \item $\hbox{The operator } \cA:=- A_{1,2}D^{-1} A_{2,1}\partial_{x}^2 \text{ is strongly elliptic.}$
    \end{itemize}
\end{lemma}
Relying on this result and an energy argument, we derive time-decay estimates for the solutions of \eqref{parabolic}.

\begin{lemma}\label{lemmaparabolic}
Assume $U_{1,0}\in L^2$ and let $N$ be the solution of \eqref{parabolic2}. Then, for all $k=1,2,...$, there exists a generic constant $C_{k}>0$ such that, for all $t>0$,
\begin{equation}\label{L2x}
\begin{aligned}
&\|\partial_x^{k}N(t)\|_{L^2}\leq C_{k} t^{-\frac{k}{2}} \|U_{1,0}\|_{L^2}.
\end{aligned}
\end{equation}
 Additionally, if $\| |x|^{\mu} U_{1,0}\|_{L^2}<\infty$ with $0< \mu\leq 1$, then
\begin{equation}\label{Hk}
\left\{
\begin{aligned}
&\|N(t)\|_{L^2}\leq C_k (1+t)^{-\frac{\mu}{2}} \|(1+|x|^{\mu})U_{1,0}\|_{L^2},\\
&\|\partial_x^{k}N(t)\|_{L^2}\leq C_{k} t^{-\frac{k}{2}-\frac{\mu}{2}} \|(1+|x|^{\mu})U_{1,0}\|_{L^2}.
\end{aligned}
\right.
\end{equation}

\end{lemma}

\begin{proof}
The Kalman rank condition \eqref{K} and Lemma \ref{l:parabolic} imply that there exists a constant $\kappa_0>0$ such that
\begin{align}
-\bigl(A_{1,2}D^{-1} A_{2,1}\partial_{x}^2 V, V\bigl) \geq \kappa_0 \|\partial_x V\|_{L^2}^2\quad\quad\text{for all} ~V\in \mathbb{R}^{n_1}.\label{disspara}
\end{align}
Hence, using \eqref{disspara}, standard energy estimates give
\begin{equation}\nonumber
\begin{aligned}
&\frac{d}{dt}\|N(t)\|_{L^2}^2+2\kappa_{0} \|\partial_x N(t)\|_{L^2}^2=0,\\
&\frac{d}{dt}\|\partial_x^k N(t)\|_{L^2}^2+2\kappa_{0} \|\partial^{k+1}_x N(t)\|_{L^2}^2=0,\quad\quad k=1,2,....
\end{aligned}
\end{equation}
Using time-weights, we get
\begin{equation}\nonumber
\begin{aligned}
&\frac{d}{dt}\Big(t^{k} \|\partial^{k}_x N(t)\|_{L^2}^2\Big)+ 2\kappa_{0} t^{k} \|\partial^{k+1}_x N(t)\|_{L^2}^2=k t^{k-1} \|\partial^k_x N(t)\|_{L^2}^2.
\end{aligned}
\end{equation}
Defining the Lyapunov functional
\begin{equation}\nonumber
\begin{aligned}
\mathcal{L}_{N,k}(t):=\|N(t)\|_{L^2}^2+\sum_{1\leq k'\leq k}\widetilde{\var}_{k'} t^{k'} \|\partial^{k'}_x N(t)\|_{L^2}^2,
\end{aligned}
\end{equation}
we have
\begin{equation}\label{LYN}
\begin{aligned}
&\frac{d}{dt}\mathcal{L}_{N,k}(t)+(2\kappa_{0}-\widetilde{\var}_{1}) \|\partial_x N(t)\|_{L^2}^2\\
&\quad+\sum_{1\leq k'\leq k}(2\kappa_0  \widetilde{\var}_{k'}- (k'+1) \widetilde{\var}_{k'+1}) t^{k'}\|\partial^{k'+1}_x N(t)\|_{L^2}^2=0.
\end{aligned}
\end{equation}
Choosing 
\[
\widetilde{\var}_{1}=\kappa_0,\quad\quad \widetilde{\var}_{k+1}=\frac{\kappa_0}{k+1}\widetilde{\var}_{k},
\]
and integrating \eqref{LYN} in time, we obtain
\begin{equation}\label{mu10}
\begin{aligned}
&\|N(t)\|_{L^2}^2+ \sum_{1\leq k'\leq k}t^{k'} \|\partial^{k'}_x N(t)\|_{L^2}^2\\
&\quad\quad+\int_{0}^{t}\Big(\|\partial_x N(\tau)\|_{L^2}^2+\sum_{1\leq k'\leq k}\tau^{k'} \|\partial^{k'+1}_x N(\tau)\|_{L^2}^2 \Big)\,d\tau\leq C_k \|U_{1,0}\|_{L^2}^2,
\end{aligned}
\end{equation}
which leads to \eqref{L2x}. Next, we prove \eqref{Hk}. We define
\begin{align}
S(x,t)=\int^{x}_{-\infty}N(y,t)\,dy \quad \text{and} \quad S_0(x)=\int^{x}_{-\infty}U_{1,0}(y)\,dy.\nonumber
\end{align}
Clearly, $S$ also satisfies the parabolic equation \eqref{parabolic} with the initial datum $S_0$. Similarly, we introduce the functional
\begin{equation}\label{matcalL*}
\begin{aligned}
\mathcal{L}^*_{N,k}(t):=\|S(t)\|_{L^2}^2+\sum_{1\leq k'\leq k}\var^*_{k'} t^{k'} \|\partial^{k'}_x S(t)\|_{L^2}^2,
\end{aligned}
\end{equation}
with some suitable small constants $\var^*_k$, $k=1,2,...$, such that
\begin{equation}\label{LS}
\begin{aligned}
&\frac{d}{dt}\mathcal{L}^*_{N,k}(t)+\kappa_0 \|\partial_x S(t)\|_{L^2}^2+\kappa_0\sum_{1\leq k'\leq k}  \var^*_{k'} t^{k'}\|\partial^{k'+1}_x S(t)\|_{L^2}^2\leq 0.
\end{aligned}
\end{equation}
Taking advantage of the Caffarelli-Kohn-Nirenberg inequality \eqref{CKNbest}, we have
\begin{equation}\label{S0}
\begin{aligned}
\|S_{0}\|_{L^2}\leq 2\| |x| U_{1,0}\|_{L^2}.
\end{aligned}
\end{equation}
Hence, integrating \eqref{LS} over $[0,t]$ and using \eqref{S0} and the fact that $\partial_x S=N$, we arrive at
\begin{equation}\label{mu1N}
\begin{aligned}
&\sum_{0\leq k'\leq k-1}t^{k'+1} \|\partial^{k'}_x N(t)\|_{L^2}^2+\sum_{0\leq k'\leq k-1}\int_{0}^{t}\tau^{k'+1} \|\partial^{k'+1}_x N(\tau)\|_{L^2}^2\,d\tau\\
&\hspace{8cm}\leq C_k \| |x| U_{1,0}\|_{L^2}^2.
\end{aligned}
\end{equation}
For $k'=0,1,2,...$, let the linear operator $T_k'$ be defined by $T_{k'}(U_{1,0})=\partial_x^{k'} N$. Then, the inequalities \eqref{mu10} and \eqref{mu1N} yield
\begin{equation}\nonumber
\begin{aligned}
\|T_{k'}(U_{1,0})\|_{L^2}\leq C_{k'}t^{-\frac{k'}{2}}\| U_{1,0}\|_{L^2(dx)}\:\:\text{and}\:\: \|T_{k'}(U_{1,0})\|_{L^2}\leq C_{k'}t^{-\frac{k'}{2}-\frac{1}{2}}\| U_{1,0}\|_{L^2(|x|^2 dx)}.
\end{aligned}
\end{equation}
Employing the Stein-Wassin interpolation theorem (see  \cite[Theorem 5.4.1]{berghinter} or \cite{steinwassin}), we have
\[
\|T_{k'}(U_{1,0})\|_{L^2}\leq C_{k'}t^{-\frac{k'}{2}-\frac{\mu}{2}}\| U_{1,0}\|_{L^2(|x|^{2\mu} dx)},\quad \quad 0<\mu<1,
\]
which completes the proof of $\eqref{Hk}$.

\end{proof}

\subsection{Time-decay estimates of the error}

In this subsection, we establish faster decay rates for the error between $U_1$ and $N$. Inspired by \cite{CBD2,CBD3}, we introduce the damped mode
\begin{align}
R=D^{-1}A_{2,1}U_1+D^{-1}A_{2,2} U_2+\int^x_{-\infty} U_2(y,t)\,dy,\label{R0}
\end{align}
which solves the damped system
\begin{equation}\label{eqR0}
\begin{aligned}
&\partial_{t}R+D R=D^{-1}A_{2,1} \partial_{t} U_1+D^{-1} A_{2,2} \partial_{t} U_{2}.
\end{aligned}
\end{equation}
Setting $U_2=\partial_{x}R-D^{-1}A_{2,1} \partial_{x} U_1-D^{-1}A_{2,2}\partial_{x} U_2 $ and since $A_{1,1}=0$, the equation $\eqref{GE}_{1}$ can be rewritten as
\begin{equation}
\partial_{t}U_1-A_{1,2}D^{-1}A_{2,1} \partial^2_{x} U_1=\partial_{x}^2(-A_{1,2} R+D^{-1}A_{2,2} U_2).\label{U1para}
\end{equation}
Therefore, as the right-hand side terms in \eqref{U1para} decay rapidly, one expects that the system \eqref{U1para} is asymptotically close to the linear parabolic system \eqref{parabolic}. We have the following lemma.

\begin{lemma}\label{lemmaerror}
Let the assumptions of Theorem \ref{DecayThm1} be satisfied, and $U=(U_1,U_2)$ be the solution to \eqref{Systlin} supplemented with the initial datum $U_0=(U_{0,1},U_{0,2})\in H^1$. In addition, assume $A_{1,1}=0$ and $|x| U_{0,2}\in L^2$. Then, for  all $t>0$ and any constant $0<\var \ll1/2$, we have
\begin{align}
\|R(t)\|_{L^2}&\leq C(1+t)^{-\frac{1}{2}}(\|U_0\|_{H^1}+\| |x| U_{0,2}\|_{L^2}), \label{Rdecay} \\
\|(U_1- N)(t)\|_{L^2}&\leq C(1+t)^{-\frac{1}{2}+\var} (\|U_0\|_{H^1}+\| |x| U_{0,2}\|_{L^2}),\label{asyU1}
\end{align}
where $R$ is defined by \eqref{R}, $N$ is the solution to \eqref{parabolic} subject to the initial datum $U_{0,1}$ and  $C>0$ is a generic constant.

Furthermore, under the additional assumption $ |x| U_{1,0}\in L^2$, we have
\begin{align}
&\|(U_1- N)(t)\|_{L^2}\leq C(1+t)^{-\frac{1}{2}} (\|U_0\|_{H^1}+\| |x| U_{0}\|_{L^2}).\label{asyU2}
\end{align}
\end{lemma}

\begin{proof}
To recover the time-decay estimates, we adapt the hypocoercive approach from Section \ref{sec:3}. The proof is divided into three steps.
\smallbreak
\noindent $\bullet$  \emph{Step 1: Decay estimates for $R$}. 
Performing $L^2$-energy estimate on \eqref{eqR} and using \eqref{Strong Dissipativity}, we obtain
\begin{equation}\nonumber
\begin{aligned}
&\frac{d}{dt}\|R(t)\|_{L^2}^2+2\kappa\|R(t)\|_{L^2}^2\\
&\leq 2\big(\|D^{-1}A_{2,1} \partial_{t} U_1(t)\|_{L^2}+\|D^{-1} A_{2,2} \partial_{t} U_{2}(t)\|_{L^2}\big) \|R(t)\|_{L^2},
\end{aligned}
\end{equation}
which implies
\begin{equation}\label{Rt1}
\begin{aligned}
&\|R(t)\|_{L^2}\leq e^{-\kappa t} \|R(0)\|_{L^2}+C\int_{0}^{t}e^{-\kappa (t-\tau)}\|(\partial_{t}U_{1},\partial_{t}U_{2})(\tau)\|_{L^2}\, d\tau.
\end{aligned}
\end{equation}
One deduces from the Caffarelli-Kohn-Nirenberg inequality \eqref{CKNbest} that
\begin{equation}
\begin{aligned}
\|R(0)\|_{L^2}&\leq C \|U_{0}\|_{L^2}+C\||x|U_{2,0}\|_{L^2}.\label{Rt2}
\end{aligned}
\end{equation}
And according to \eqref{GE} and the decay \eqref{e:decayThm1} of $\partial_x U$ and $U_2$ at hand, we have
\begin{equation}\label{Rt3}
\begin{aligned}
\|(\partial_{t}U_{1},\partial_{t}U_{2})(t)\|_{L^2}\leq C\|(\partial_x U, U_2)(t)\|_{L^2}\leq C(1+t)^{-\frac{1}{2}}\|U_0\|_{H^1}.
\end{aligned}
\end{equation}
Substituting \eqref{Rt2} and \eqref{Rt3} into \eqref{Rt1} yields \eqref{Rdecay}.

\smallbreak
\noindent $\bullet$ \emph{Step 2: Decay estimates for the error term}. In this step, we establish the decay estimates \eqref{asyU1} of the error uknown $\widetilde{U}_{1}:=U_1- N$. To this matter, we observe that $\widetilde{U}_{1}$ satisfies
\begin{equation}
\partial_{t}\widetilde{U}_{1}-A_{1,2}D^{-1}A_{2,1} \partial^2_{x} \widetilde{U}_{1}=\partial_x^2 \widetilde{F},\quad\quad \widetilde{U}_{1}(0,x)=0,\label{erreq}
\end{equation}
with $\widetilde{F}:=-A_{1,2} R+D^{-1}A_{2,2} U_2$. 
Defining
\begin{align}
\widetilde{Q}(x,t):=\int^{x}_{-\infty}\int^{y}_{-\infty}\widetilde{U}_{1}(z,t)\,dzdy,\label{wideQ}
\end{align}
we deduce from \eqref{erreq} that
\begin{equation}
\partial_{t}\widetilde{Q}-A_{1,2}D^{-1}A_{2,1} \partial^2_{x} \widetilde{Q}=\widetilde{F}\quad \text{and}\quad \widetilde{Q}(0,x)=0.\label{wideN}
\end{equation}
Hence, Duhamel's principle applied to \eqref{wideN} yields
\begin{equation}\label{duhamel}
\begin{aligned}
&\widetilde{Q}(x,t)=\int_{0}^{t} \mathcal{G}^sF(x,t)\,ds,
\end{aligned}
\end{equation}
where $\mathcal{G}^sF(x,t)$ is the solution to 
\begin{equation}\label{duhamel1}
\begin{aligned}
&\partial_{t}(\mathcal{G}^sF)-A_{1,2}D^{-1}A_{2,1} \partial^2_{x}(\mathcal{G}^sF)=0,\quad t>s,\quad \mathcal{G}^sF(x,s)=F(x,s).
\end{aligned}
\end{equation}
Applying Lemma \ref{lemmaparabolic} with $k=2$ to  \eqref{duhamel1} gives rise to
\begin{equation}\label{Gs1}
\begin{aligned}
\|\partial_x^2 \mathcal{G}^sF(t)\|_{L^2}\leq (t-s)^{-1}\|F(s)\|_{L^2}.
\end{aligned}
\end{equation}
Then, differentiating \eqref{duhamel1} with respect to $x$ and taking advantage of Lemma \ref{lemmaparabolic} with $k=1$ yield
\begin{equation}\label{Gs2}
\begin{aligned}
\|\partial_x^2 \mathcal{G}^s F(t)\|_{L^2}\leq (t-s)^{-\frac{1}{2}}\|\partial_x F(s)\|_{L^2}.
\end{aligned}
\end{equation}
Using an interpolation argument between \eqref{Gs1} and \eqref{Gs2}, for any $0<\var<1/2$, we have
\begin{equation}\label{Gs}
\begin{aligned}
\|\partial_x^2 \mathcal{G}^sF(t)\|_{L^2}&\leq \Big( (t-s)^{-1}\|F(s)\|_{L^2}\Big)^{1-2\var} \Big( (t-s)^{-\frac{1}{2}}\|\partial_x F(s)\|_{L^2} \Big)^{2\var}\\
&\leq (t-s)^{-(1-\var)}(\|F(s)\|_{L^2}+\|\partial_x F(s)\|_{L^2}).
\end{aligned}
\end{equation}
Gathering the estimates \eqref{e:decayThm1} and \eqref{Rdecay}, we derive decay for $F$ as follows
\begin{equation}\label{FH1decay}
\begin{aligned}
\|F(s)\|_{L^2}+\|\partial_x F(s)\|_{L^2}&\leq C\|R(s)\|_{L^2}+C\|U_2(s)\|_{L^2}+C\|\partial_x U(s)\|_{L^2}\\
&\leq C s^{-\frac{1}{2}}  (\|U_{0}\|_{H^1}+C\||x|U_{2,0}\|_{L^2}).
\end{aligned}
\end{equation}
It thus follows from \eqref{duhamel}, \eqref{Gs} and \eqref{FH1decay} that
\begin{equation}\label{U1homo}
\begin{aligned}
\|\widetilde{U}_{1}(t)\|_{L^2}& =\|\partial_x^2 \widetilde{Q}(t)\|_{L^2}\\
&\leq \int_{0}^{t} \|\partial_x^2 \mathcal{G}^sF(t)\|_{L^2}\,ds\\
&\leq C\int_{0}^{t} (t-s)^{-(1-\var)} s^{-\frac{1}{2}}\,ds~ (\|U_{0}\|_{H^1}+\|(1+|x|)U_{2,0}\|_{L^2})\\
&\leq CB(\var, \frac{1}{2}) t^{-\frac{1}{2}+\var} (\|U_{0}\|_{H^1}+\||x|U_{2,0}\|_{L^2}),
\end{aligned}
\end{equation}
where $B(s_1,s_2)$ denote the beta function. This leads to \eqref{asyU1}.

\smallbreak
\noindent $\bullet$ \emph{Step 3: Improved decay estimates for the error part}. Under the additional condition $|x|U_{1,0}\in L^2$, we now justify the improved decay estimates \eqref{asyU2} for $\widetilde{U}_1$. Taking the inner product of \eqref{erreq} with $-\widetilde{Q}$ given by \eqref{wideQ} and applying  \eqref{disspara}, we obtain
\begin{equation}\label{QL2}
\begin{aligned}
&\frac{d}{dt}\|\partial_x\widetilde{Q}(t)\|_{L^2}^2+2\kappa_0\| \widetilde{U}_{1}(t)\|_{L^2}^2=-2\bigl(\partial_x \widetilde{F}, \widetilde{U}_{1}\bigl)_{L^2}.
\end{aligned}
\end{equation}
On the other hand, $(\widetilde{U}_{1},U_2)$ solves the partially dissipative hyperbolic system 
\begin{equation}\label{wideU1U2}
\left\{
\begin{aligned}
&\partial_{t}\widetilde{U}_{1}+A_{1,2} \partial_x U_2=-A_{1,2}D^{-1}A_{2,1} \partial^2_{x} N,\\
&\partial_{t} U_2+A_{2,1}\partial_x \widetilde{U}_{1}+A_{2,2}\partial_x U_2+D U_2=-A_{2,1}\partial_x N,\\
&(\widetilde{U}_{1}, U_2)(0,x)=(0,U_{2,0})(x).
\end{aligned}
\right.
\end{equation}
Performing time-weighted $L^2$-energy estimates, we obtain
\begin{equation}
\begin{aligned}
& \frac{d}{dt}\Big(t\|(\widetilde{U}_{1},U_2)(t)\|_{L^2}^2\Big)+2t\kappa_0 \|U_2(t)\|_{L^2}^2\\
 &=\|(\widetilde{U}_{1},U_2)(t)\|_{L^2}^2-2t\bigl(A_{1,2}D^{-1}A_{2,1} \partial^2_{x} N ,\widetilde{U}_{1}\bigl)_{L^2}-2t\bigl(A_{2,1}\partial_x N, U_2\bigl)_{L^2}.\label{LywideU1U2}
  \end{aligned}
  \end{equation}
We define the Lyapunov functional 
\[
\widetilde{\mathcal{L}}(t):=\mathcal{L}(t)+\mathcal{L}^*_{N,2}(t)+\eta^*_1 \|\partial_x\widetilde{Q}(t)\|_{L^2}^2+\eta^*_2 t \|(\widetilde{U}_{1},U_2)(t)\|_{L^2}^2,
\]
where $\mathcal{L}(t)$ and $\mathcal{L}^*_{N,2}(t)$ are given by \eqref{def:GeneralLyaNoF} and \eqref{matcalL*}, respectively. In view of \eqref{Lineq}, \eqref{LS}, \eqref{QL2} and \eqref{LywideU1U2}, we get
\begin{equation}\label{widematL}
\begin{aligned}
&\frac{d}{dt} \widetilde{\mathcal{L}}(t)+(\frac{\kappa}{2}-\eta_2) \| U_2(t)\|_{L^2}^2+\kappa (\frac{1}{2}+\eta_{0} t) \| \partial_x U_2(t)\|_{L^2}^2\\
&\quad+\frac{\var_{*}}{4C_{K}}\|\partial_x U(t)\|_{L^2}^2+\kappa_0\sum_{0\leq k'\leq 3}  t^{k'}\|\partial^{k'}_x N(t)\|_{L^2}^2\\
&\quad+(2\kappa_0\eta^*_1-\eta_2^*) \| \widetilde{U}_{1}(t)\|_{L^2}^2+2 \kappa_0 \eta^*_2 t\|U_2(t)\|_{L^2}^2\\
&\quad\leq -2\eta^*_1 \bigl(\partial_x \widetilde{F}, \widetilde{U}_{1}\bigl)_{L^2}-2\eta^*_2 t\bigl(A_{1,2}D^{-1}A_{2,1} \partial^2_{x} N ,\widetilde{U}_{1}\bigl)_{L^2}-2\eta^*_2  t\bigl(A_{2,1}\partial_x N, U_2\bigl)_{L^2}.
\end{aligned}
\end{equation}
The right-hand side terms of \eqref{widematL} are analyzed as follows. First, one has
\begin{equation}\nonumber
\begin{aligned}
-2\eta^*_1 \bigl(\partial_x \widetilde{F}, \widetilde{U}_{1}\bigl)_{L^2}&\leq \kappa_0 \eta^*_1 \| \widetilde{U}_{1}(t)\|_{L^2}^2+C \eta^*_1 ( \|U_2(t)\|_{L^2}^2+\|\partial_x U(t)\|_{L^2}^2).
\end{aligned}
\end{equation}
Similarly, we have
\begin{equation}\nonumber
\begin{aligned}
-2\eta^*_2 t\bigl(A_{1,2}D^{-1}A_{2,1} \partial^2_{x} N ,\widetilde{U}_{1}\bigl)_{L^2}&\leq C\eta^*_2  t^2 \|\partial^2_{x} N(t)\|_{L^2}^2+C\eta^*_2 \|\widetilde{U}_{1}(t)\|_{L^2}^2,
\end{aligned}
\end{equation}
and
\begin{equation}\nonumber
\begin{aligned}
-2\eta^*_2  t\bigl(A_{2,1}\partial_x N, U_2\bigl)_{L^2}&\leq \eta^*_2 \kappa_0 t \|U_2(t)\|_{L^2}^2+C\eta^*_2 t \|\partial_x N(t)\|_{L^2}^2.
\end{aligned}
\end{equation}
Substituting the above three estimates into \eqref{widematL} and choosing
\[
\eta^*_1=\min\{\frac{\kappa}{4}, \frac{\var_*}{8C_K}\},\quad\quad \eta^*_2=\frac{\kappa_0\eta^*_1}{2(C+1)},
\]
yield
\begin{equation}\nonumber
\begin{aligned}
&\frac{d}{dt} \widetilde{\mathcal{L}}(t)+\frac{\kappa}{4} \| U_2(t)\|_{L^2}^2+\kappa (\frac{1}{4}+\eta_{0} t) \| \partial_x U_2(t)\|_{L^2}^2+\frac{\var_{*}}{8C_{K}}\|\partial_x U(t)\|_{L^2}^2\\
&\quad+\frac{\kappa_0}{2}\sum_{0\leq k'\leq 2}  t^{k'}\|\partial^{k'}_x N(t)\|_{L^2}^2+\kappa_0\eta^*_1 \| \widetilde{U}_{1}(t)\|_{L^2}^2+\kappa_0 \eta^*_2 t\|U_2(t)\|_{L^2}^2\leq 0.
\end{aligned}
\end{equation}
Integrating in time leads to
\begin{equation}\nonumber
\begin{aligned}
&t\|\widetilde{U}_{1}(t)\|_{L^2}^2+\int_{0}^{t}\Big(\|\widetilde{U}_{1}(\tau)\|_{L^2}^2+\tau \|U_2(\tau)\|_{L^2}^2\Big)\,d\tau\leq C(\|U_0\|_{H^1}^2+\| |x| U_{0}\|_{L^2}^2),
\end{aligned}
\end{equation}
which concludes the proof of Lemma \ref{lemmaerror}.

\end{proof}

\noindent
\textbf{Proof of Theorem \ref{ThmDecayGeneral1}:} The $L^2$-decay estimate \eqref{U1NL2} of the error $U_1-N$ follows directly from Lemma \ref{lemmaerror}. Then, we impose the condition $|x|^{\mu}U_{0,1}\in L^2$ with $0<\mu\leq 1$. In the case  $0<\mu<1$, the estimates \eqref{U1NL2} for $0<\var<\frac{1}{2}(1-\mu)$ together with \eqref{Hk} guarantees that
\begin{equation}\nonumber
\begin{aligned}
\|U_1(t)\|_{L^2}&\leq \|N(t)\|_{L^2}+\|(U_1-N)(t)\|_{L^2}\\
&\leq C(1+t)^{-\frac{\mu}{2}} \|(1+|x|^{\mu})U_{1,0}\|_{L^2}+C(1+t)^{-\frac{1}{2}+\var}(\|U_0\|_{H^1}+\| |x| U_{2,0}\|_{L^2})\\
&\leq C  (1+t)^{-\frac{\mu}{2}} X_0.
\end{aligned}
\end{equation}
In the case $\mu=1$, one deduces from \eqref{Hk} and \eqref{asyU2} that
\begin{equation}\nonumber
\begin{aligned}
\|U_1(t)\|_{L^2}&\leq \|N(t)\|_{L^2}+\|(U_1-N)(t)\|_{L^2}\\
&\leq C(1+t)^{-\frac{1}{2}} \|(1+|x|)U_{1,0}\|_{L^2}+C(1+t)^{-\frac{1}{2}}(\|U_0\|_{H^1}+\| |x| U_{0}\|_{L^2})\\
&\leq C  (1+t)^{-\frac{1}{2}} X_0.
\end{aligned}
\end{equation}
Combining the above decay estimates for $U_1$ and the estimates derived for $U_2$ in Theorem \ref{DecayThm1}, we get $L^2$-decay estimates for $U$ in \eqref{decay111}.

We now establish the faster decay rates of $\partial_x U$ and $U_2$ in \eqref{decay111}. The Lyapunov inequality \eqref{Lineq} can be rewritten as
\begin{equation}\label{llddd}
    \begin{aligned}
    &\frac{d}{dt}\Big(\mathcal{L}_{*}(t)+\eta_{0}t\|\partial_x U(t)\|_{L^2}^2\Big)+\frac{\kappa}{2} \| U_2(t)\|_{L^2}^2\\
    &\quad\quad\quad\quad+\kappa (\frac{1}{2}+\eta_{0} t) \| \partial_x U_2(t)\|_{L^2}^2+\frac{\var_{*}}{4C_{K}}\|\partial_x U(t)\|_{L^2}^2\leq 0,
    \end{aligned}
\end{equation}
with $\mathcal{L}_{*}(t):=\|U(t)\|_{H^1}^2+\mathcal{I}(t)\sim \|U(t)\|_{H^1}^2.$
Choosing the constant $\eta_{0}$ sufficiently small, applying Lemma \ref{decayineq} to the differential inequality \eqref{llddd} and noticing that $\partial_x U$ is uniformly bounded in $L^2$, we conclude
\begin{equation}\nonumber
\begin{aligned}
&\|\partial_x U(t)\|_{L^2}\leq C (1+t)^{-\frac{\mu}{2}-\frac{1}{2}}X_{0}.
\end{aligned}
\end{equation}
Then, using \eqref{this111} together with the decay for $U_{1}$ and $\partial_xU$ at hand, leads to
\begin{equation}\nonumber
\begin{aligned}
    \|U_2(t)\|_{L^2}&\leq e^{-\kappa t}\|U_{2,0}\|_{L^2}+C X_{0}\int_{0}^{t}e^{-\kappa (t-\tau)}(1+\tau)^{-\frac{\mu}{2}-\frac{1}{2}}\,d\tau \\
    &\leq C (1+t)^{-\frac{\mu}{2}-\frac{1}{2}}X_{0},\nonumber
\end{aligned}
\end{equation}
which concludes the proof of Theorem \ref{ThmDecayGeneral1}.

\section{Wave formulation method: Proof of Theorem  \ref{ThmDecayGeneral2}} \label{sec:wavemethod}

In this section we prove Theorem  \ref{ThmDecayGeneral2}. We introduce the unknown
\begin{align}
W(x,t)=\int^{x}_{-\infty}U_1(y,t)\,dy,\label{W}
\end{align}
which satisfies the following damped wave formulation
\begin{align}
&\partial_{t}^2 W-A_{1,2} A_{2,1} \partial_{x}^2 W +A_{1,2} A_{2,2}A_{1,2}^{-1} \partial_t \partial_x W+ A_{1,2} D A_{1,2}^{-1}\partial_t W =0.\label{dampedwave1}
\end{align}
We note that $A_{1,2}^{-1}$ is well-defined as, if $A_{1,2}$ is not invertible, then $A_{1,2}$ is not a $n_1\times n_1$ matrix of full rank, which, together with $A_{1,1}=0$, contradicts the Kalman rank condition \eqref{K}. To obtain \eqref{dampedwave1}, we integrated $\eqref{GE}_{1}$ over $(-\infty, x)$ which 
\begin{align}
\partial_t W+A_{1,2} U_2=0,\label{dampedwave111}
\end{align}
Then, differentiating in time the above system and making use of $\eqref{GE}_{2}$, we get
\begin{align}
\partial^2_{tt} W-A_{1,2} A_{2,1}\partial_x U_1-A_{1,2} A_{2,2}\partial_x U_2-A_{1,2} D U_2=0.\label{dampedwave112}
\end{align}
Combining \eqref{W}, \eqref{dampedwave111} and \eqref{dampedwave112} together, we have \eqref{dampedwave1}.

Since $\partial_x W=U_1$ and $|\partial_t W|\sim |U_2|$,  we can derive $L^2$ time-decay estimates for $U$ once we establish decay estimates of the wave energy $\|(\partial_t W,\partial_x W)(t)\|_{L^2}^2$. In the following lemma we establish time-space weighted energy estimates for $W$.
\begin{lemma}\label{lemmaW}
Let $W$ be defined by \eqref{W}. Then under the assumptions of Theorem  \ref{ThmDecayGeneral2}, for all $t>0$, we have
\begin{equation}\label{estimateW}
    \begin{aligned}
        &\int_{\mathbb{R}} (1+t+|x|)^{2\mu-1}(|\partial_t W|^2+|\partial_x W|^2)\,dx\\
        &\quad\quad+\int_{0}^{t}\int_{\mathbb{R}}\Big( (1+t+|x|)^{2\mu-1} |\partial_t W |^2+(1+t+|x|)^{2\mu-2} |\partial_x W |^2\Big)\,dxd\tau \leq CY_{0}^2,
    \end{aligned}
\end{equation}
with $Y_{0}:=\|U_{0}\|_{H^1}+\||x|^{\mu} U_{1,0}\|_{L^2} + \||x|^{\mu-\frac{1}{2}}U_{2,0}\|_{L^2}$.
\end{lemma}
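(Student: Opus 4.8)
\textit{Strategy of proof of Lemma \ref{lemmaW}.} The plan is to run a space--time weighted energy estimate on the damped wave system \eqref{dampedwave1}, in the spirit of Ikehata and Mochizuki--Motai, based on the light-cone weight $\phi(t,x):=(1+t+|x|)^{2\mu-1}$ together with a Morawetz-type corrector analogous to the term $\mathcal I$ from Section \ref{sec:3}. Rather than testing \eqref{dampedwave1} against $\phi\,\partial_t W$, I would test against $\phi\,(A_{1,2}A_{2,1})^{-1}\partial_t W$ --- equivalently, work directly with $U_1=\partial_x W$ and $U_2=-A_{1,2}^{-1}\partial_t W$ --- so that, using $A_{1,2}=A_{2,1}^{T}$, the positive definiteness of $A_{1,2}A_{2,1}$ and the invertibility of $A_{1,2}$, the principal part produces $\tfrac12\tfrac{d}{dt}\int_{\mathbb R}\phi\,(|U_1|^2+|U_2|^2)\,dx$ modulo weight-derivative terms, while the damping contributes the coercive quantity $\int_0^t\!\!\int_{\mathbb R}\phi\,\langle DU_2,U_2\rangle\,dx\,d\tau\ge\kappa\int_0^t\!\!\int_{\mathbb R}\phi\,|U_2|^2\,dx\,d\tau$ by \eqref{Strong Dissipativity}; the mixed term $A_{1,2}A_{2,2}A_{1,2}^{-1}\partial_t\partial_x W$ is of lower order and, after one integration by parts and use of \eqref{GE}, is absorbed into the energy and the damping.

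The crux is the treatment of the weight-derivative terms $\tfrac12\!\int(\partial_t\phi)(|U_2|^2+|U_1|^2)-\int(\partial_x\phi)\langle U_1,A_{1,2}U_2\rangle$, and of the analogous terms coming from the corrector. Here one exploits the \textbf{sub-characteristic property of the cone weight}: since $2\mu-1\ge 0$ one has $|\partial_x\phi|\le\partial_t\phi$, so --- after a harmless rescaling of $x$ normalising the largest characteristic speed of $\partial_{tt}^2-A_{1,2}A_{2,1}\partial_{xx}^2$ to one --- this combination is a nonnegative ``incoming flux'', of the characteristic form $\tfrac{2\mu-1}{2}\int(1+t+|x|)^{2\mu-2}\,|U_2+\operatorname{sgn}(x)U_1|^2\,dx$, that may be kept on the favourable side. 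Its $|U_2|^2$-content is reabsorbed into the damping thanks to $(1+t+|x|)^{2\mu-2}\le(1+t)^{-1}(1+t+|x|)^{2\mu-1}$ (for $t$ large); the $|U_1|^2$-content is compensated by the spacetime quantity $\int_0^t\!\!\int_{\mathbb R}(1+t+|x|)^{2\mu-2}|\partial_x W|^2$, which I would generate with a favourable sign from a corrector of Morawetz type --- schematically $\int_{\mathbb R}\operatorname{sgn}(x)\,\phi\,\langle\partial_t W,\partial_x W\rangle\,dx$ with a small coupling constant, so that the total functional remains comparable to $\int_{\mathbb R}\phi(|\partial_t W|^2+|\partial_x W|^2)\,dx$ --- the new remainders (including boundary contributions at $x=0$ and a term $\langle A_{1,2}DU_2,U_1\rangle$ that is rewritten via \eqref{GE}) being again of lower order. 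Collecting the identities one reaches a differential inequality of the form
\begin{equation*}
\frac{d}{dt}\mathcal E(t)+c\!\int_{\mathbb R}\!\Big((1+t+|x|)^{2\mu-1}|\partial_t W|^2+(1+t+|x|)^{2\mu-2}|\partial_x W|^2\Big)\,dx\le 0,\qquad\mathcal E(t)\sim\!\int_{\mathbb R}\!\phi\,(|\partial_t W|^2+|\partial_x W|^2)\,dx,
\end{equation*}
for $t\ge 1$; integrating in time and bounding $\mathcal E(1)\lesssim Y_0^2$ --- which follows from \eqref{estimateThm1}, $\partial_x W=U_1$, $\partial_t W=-A_{1,2}U_2$, and the fact that at $t=0$ the weight $(1+|x|)^{2\mu-1}$ reduces, on the $U_2$-component, to $|x|^{2\mu-1}$, i.e. to the norm $\||x|^{\mu-1/2}U_{2,0}\|_{L^2}$ present in $Y_0$ --- yields \eqref{estimateW}. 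The range $0\le t\le 1$ is covered directly by \eqref{estimateThm1}.

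The \textbf{main obstacle} is precisely this balancing: if the weight-derivative terms are estimated crudely one only gets $\mathcal E(t)\lesssim (1+t)^{C}$ instead of a uniform bound, so one must keep them in the characteristic quadratic form above and carefully track the constants provided by the damping and the corrector. It is here that the hypothesis $\mu\le 1$ is essential --- for $\mu>1$ the exponent $2\mu-2$ is positive, the weight-derivative error grows and cannot be reabsorbed, which is why \eqref{estimateW} fails in that range (cf. the comment after Theorem \ref{ThmDecayGeneral2}) --- and $\mu=1$ is the delicate endpoint, where there is no slack left in the $\partial_t W$-absorption. A subsidiary technical point is that the weighted identities involve second derivatives of $W$ (through the mixed term and the corrector), which must be eliminated using \eqref{dampedwave1} and \eqref{GE} together with the $H^1$-in-time bound \eqref{estimateThm1} on $(0,1)$.
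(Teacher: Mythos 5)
Your outline diverges from the paper's proof in a way that opens a genuine gap. The paper does \emph{not} use a Morawetz-type multiplier $\mathrm{sgn}(x)\,\phi\,\partial_x W$; its corrector comes from pairing \eqref{dampedwave1} with $\varphi'(t+|x|)W$ (or with $W$ when $\mu=1$), i.e.\ with the \emph{antiderivative} of $U_1$ rather than with $\partial_x W=U_1$. This difference matters quantitatively. Testing with $\varphi'W$ produces, after $\int W\,\partial_{xx}^2W$-integration by parts, the coercive quantity $\int A_{1,2}A_{2,1}\,\varphi'\,|\partial_x W|^2$ with a \emph{full} prefactor $\varphi'$, whereas the Morawetz multiplier $\mathrm{sgn}(x)\phi\,\partial_x W$ produces only $\tfrac12\int\partial_x(\mathrm{sgn}(x)\phi)\,\langle A_{1,2}A_{2,1}\partial_x W,\partial_x W\rangle$, with prefactor $\tfrac12\partial_t\phi=\tfrac12\varphi'$ on the bulk part. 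Since the weight-derivative source coming from the $\phi\,\partial_t W$-identity also carries the coefficient $\tfrac12\varphi'$ on the $|\partial_x W|^2$-piece, a Morawetz corrector with a \emph{small} coupling $\eta<1$ supplies only $\tfrac{\eta}{2}\varphi'|\partial_x W|^2$ and cannot absorb the source at all, and even with $\eta=1$ the surplus is zero. The paper's $\varphi'W$-corrector yields a factor-two surplus, which is precisely why its identity \eqref{ppppp2} closes with unit coupling. A second issue is the damping cross term your scheme generates, $\int\mathrm{sgn}(x)\phi\,\langle A_{1,2}DA_{1,2}^{-1}\partial_t W,\partial_x W\rangle$: Young's inequality turns it into $C\int\phi|\partial_t W|^2 + c\int\phi|\partial_x W|^2$, and the second piece carries the full weight $\phi$ for which there is \emph{no} dissipation (the corrector only gives $\varphi'|\partial_x W|^2\sim(1+t+|x|)^{-1}\phi|\partial_x W|^2$). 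In the paper this term instead appears as $\int\varphi'\langle A_{1,2}DA_{1,2}^{-1}\partial_t W, W\rangle=\tfrac12\tfrac{d}{dt}\int\varphi'\langle A_{1,2}DA_{1,2}^{-1}W,W\rangle-\tfrac12\int\varphi''\langle\cdots\rangle$, i.e.\ it is absorbed into $\mathcal W_\mu(t)$ and a favourable-sign ($\varphi''<0$) error, sidestepping the problem. This is not a lower-order remainder.

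The absorption of the $|\partial_t W|^2$-content of the flux also needs care beyond ``$t$ large.'' For general $\kappa$ the bound $(1+t+|x|)^{2\mu-2}\le(1+t)^{-1}\phi$ only overcomes the damping coefficient once $t$ is larger than a $\kappa$-dependent threshold, and \eqref{estimateThm1} covers $t\in[0,1]$ but not the intermediate range. The paper does not rely on this mechanism: for $1/2<\mu<1$ it replaces the cone weight by $\varphi(s)=(a+s)^{2\mu-1}$ with $a$ large, so that $\varphi'/\varphi=(2\mu-1)/(a+s)$ is uniformly small and the absorption condition $C\varphi'\le\tfrac\kappa2\varphi$ (\eqref{varphi2}) holds for \emph{all} $t$; for $\mu=1$ (when $\varphi''\equiv0$ and this mechanism is unavailable) it instead bounds $\int_0^t\!\int|\partial_t W|^2\sim\int_0^t\!\int|U_2|^2\lesssim Y_0^2$ directly from \eqref{estimateThm1}, since the source weight $(1+\tau+|x|)^{2\mu-2}$ is then just $1$. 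So the two cases are not a cosmetic split: they correspond to two genuinely different ways of dispatching this source term, and your unified outline does not deal with either. To repair the proposal you should replace the Morawetz multiplier by the $\varphi'W$-multiplier, use a unit coupling constant, and adopt the $(a+t+|x|)^{2\mu-1}$-weight (with $a$ large) for $\mu<1$; this is exactly what the paper does.
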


\begin{proof}

The proof is split into the cases $\mu=1$ and $1/2<\mu<1$.
\smallbreak
\noindent $\bullet$  \textbf{Case 1:} $\mu=1.$ Taking the $L^2$ inner product of \eqref{dampedwave1} with $(1+t+|x|)\partial_t W$, we get
\begin{equation}\label{pppp}
\begin{aligned}
&\frac{d}{dt}\int_{\mathbb{R}} \frac{1}{2} (1+t+|x|) (|\partial_t W|^2 + A_{1,2} A_{2,1} \partial_x W \cdot \partial_x W)\,dx\\
&\quad+\int_{\mathbb{R}} \Big( (1+t+|x|) A_{1,2} D A_{1,2}^{-1} \partial_t W \cdot\partial_t W -\frac{1}{2}A_{1,2} A_{2,1} \partial_x W  \cdot\partial_x W\Big)\,dx   \\
&=\int_{\mathbb{R}} \Big( \frac{1}{2}|\partial_t W|^2- \frac{x}{|x|} A_{1,2} A_{2,1}\partial_x W  \cdot\partial_t W+\frac{1}{2}\frac{x}{|x|} A_{1,2} A_{2,2}A_{1,2}^{-1} \partial_t W  \cdot\partial_t W \Big) \,dx,
\end{aligned}
\end{equation}
where we used 
\begin{equation}\nonumber
\begin{aligned}
&\int_{\mathbb{R}} (1+t+|x|) (\partial_t^2 W \cdot\partial_t W- A_{1,2} A_{2,1} \partial_{x}^2 W  \cdot\partial_t W) \,dx\\
&\quad=\frac{d}{dt}\int_{\mathbb{R}}  \frac{1}{2}(1+t+|x|) (|\partial_t W|^2+A_{1,2} A_{2,1} \partial_x W \cdot \partial_x W) \,dx\\
&\quad\quad-\frac{1}{2}\int_{\mathbb{R}} (|\partial_t W|^2+A_{1,2} A_{2,1} \partial_x W \cdot \partial_x W) \,dx+\int_{\mathbb{R}} \frac{x}{|x|} A_{1,2} A_{2,1}\partial_x W  \cdot\partial_t W\,dx,
\end{aligned}
\end{equation}
and
\begin{equation}\nonumber
\begin{aligned}
\int_{\mathbb{R}}  (1+t+|x|)A_{1,2} A_{2,2}A_{1,2}^{-1} \partial_t \partial_x W \cdot\partial_t W \,dx=-\frac{1}{2}\int_{\mathbb{R}}\frac{x}{|x|} A_{1,2} A_{2,2}A_{1,2}^{-1} \partial_t W \cdot \partial_t W \,dx.
\end{aligned}
\end{equation}
In addition, taking the inner product of $\eqref{dampedwave1}$ with $W$, we obtain
\begin{equation}\label{pppp1}
\begin{aligned}
&\frac{d}{dt}\int_{\mathbb{R}} \Big( W \cdot\partial_t W+\frac{1}{2}  A_{1,2} D A_{1,2}^{-1} |W|^2 \Big) \,dx+\int_{\mathbb{R}}A_{1,2} A_{2,1}\partial_x W \cdot\partial_x W \,dx\\
&=\int_{\mathbb{R}}\Big(|\partial_t W|^2+A_{1,2} A_{2,2}A_{1,2}^{-1}\partial_t W \cdot\partial_x W\Big)\,dx. 
\end{aligned}
\end{equation}
It follows from \eqref{pppp} and \eqref{pppp1} that
\begin{equation}\label{pppp12}
    \begin{aligned}
    &\frac{d}{dt}\mathcal{W}(t)+\mathcal{H}(t)  \\
&=\int_{\mathbb{R}} \Big(\frac{3}{2} |\partial_t W|^2- \frac{x}{|x|} A_{1,2} A_{2,1}\partial_x W\cdot  \partial_t W\\
&\quad+\frac{1}{2}\frac{x}{|x|} A_{1,2} A_{2,2}A_{1,2}^{-1} \partial_t W \cdot \partial_t W+A_{1,2} A_{2,2}A_{1,2}^{-1}\partial_t W \cdot\partial_x W\Big)\,dx,
    \end{aligned}
\end{equation}
where $ \mathcal{W}(t)$ and $\mathcal{H}(t)$ are defined by
\begin{equation}\nonumber
    \begin{aligned}
        \mathcal{W}(t):&=\int_{\mathbb{R}}   \frac{1}{2} (1+t+|x|) (|\partial_t W|^2 + A_{1,2} A_{2,1} \partial_x W  \cdot\partial_x W)\, dx\\
        &\quad+ \int_{\mathbb{R}} \Big( W\cdot \partial_t W+\frac{1}{2}  A_{1,2} D A_{1,2}^{-1} |W|^2 \Big) \,dx,\\
        \mathcal{H}(t):&=\int_{\mathbb{R}} (1+t+|x|) A_{1,2} D A_{1,2}^{-1} \partial_t W\cdot \partial_t W \,dx+\int_{\mathbb{R}} \frac{1}{2} A_{1,2} A_{2,1} \partial_x W \cdot \partial_x W \, dx .
    \end{aligned}
\end{equation}
Notice that  $A_{1,2} D A_{1,2}^{-1}$ satisfies \eqref{Strong Dissipativity} since the eigenvalues of $D$  and $A_{1,2}D A_{1,2}^{-1}$ are the same. By the strong dissipation conditions \eqref{Strong Dissipativity} and the positive definiteness of $A_{1,2}A_{2,1}$, we have
\begin{equation}\nonumber
    \begin{aligned}
   &\mathcal{W}(t)\geq \int_{\mathbb{R}} \Big( (1+t+|x|) (\frac{1}{2}|\partial_t W|^2 +\kappa_{1} |\partial_x W |^2)+\kappa |W|^2-C|\partial_t W|^2\Big)\,dx,
    \end{aligned}
\end{equation}
    and
    \begin{equation}\nonumber
    \begin{aligned}
    &\mathcal{H}(t) \geq \int_{\mathbb{R}} \Big(\kappa (1+t+|x|)  |\partial_t W|^2 +\frac{\kappa_{1}}{2} |\partial_x W|^2\Big) \,dx.
    \end{aligned}
\end{equation}
Since $|\partial_t W|\sim |U_2|$, due to \eqref{dampedwave111} and the fact that $A_{1,2}$ is invertible, one can estimate the right-hand side of \eqref{pppp12} as follows:
\begin{equation}\nonumber
    \begin{aligned}
&\int_{\mathbb{R}} \Big(\frac{3}{2} |\partial_t W|^2- \frac{x}{|x|} A_{1,2} A_{2,1}\partial_x W \cdot \partial_t W\\
&\quad+\frac{1}{2}\frac{x}{|x|} A_{1,2} A_{2,2}A_{1,2}^{-1} \partial_t W\cdot  \partial_t W+A_{1,2} A_{2,2}A_{1,2}^{-1}\partial_t W \cdot\partial_x W\Big)\,dx\\
&\quad\leq \frac{\kappa_{1}}{4} \int_{\mathbb{R}}|\partial_x W|^2 \, dx+ C\int_{\mathbb{R}} |U_2|^2\,dx.
        \end{aligned}
\end{equation}
The above estimates give rise to
\begin{equation}\label{arrr}
    \begin{aligned}
    &\int_{\mathbb{R}} \Big( (1+t+|x|) (\frac{1}{2}|\partial_t W|^2 +\kappa_{1} |\partial_x W|^2)+\kappa |W|^2\Big)\,dx\\
    &\quad\quad+\int_{0}^{t}\int_{\mathbb{R}} \Big(\kappa (1+\tau+|x|)  |\partial_t W|^2 +\frac{\kappa_{1}}{4} |\partial_x W|^2\Big)\, dxd\tau\\
    &\quad \leq \mathcal{W}(0)+C\int_{\mathbb{R}}|U_{2}|^2\,dx+C\int_{0}^{t}\int_{\mathbb{R}} |U_{2}|^2\, dxd\tau.
    \end{aligned}
\end{equation}
 Under the assumptions \eqref{a21} and $\partial_{t}U_{1} |_{t=0}=-A_{1,2}\partial_x U_{2,0}$,
 one deduces from the Caffarelli-Kohn-Nirenberg inequality \eqref{CKNbest} that
 \begin{equation}\label{arrr1}
\begin{aligned}
\mathcal{W}(0)&\lesssim \int_{\mathbb{R}} \Big( (1+|x|)|U_{0}|^2 + \Big|\int^{x}_{-\infty}U_{1,0}(y)\,dy\Big|^2\Big)\,dx\lesssim Y_{0}^2.
\end{aligned}
\end{equation}
 By $|\partial_t W|\sim |U_2|$ and \eqref{estimateThm1}, it holds that
\begin{align}
    &\int_{\mathbb{R}}|\partial_t W|^2\,dx+\int_{0}^{t}\int_{\mathbb{R}} |\partial_t W|^2\, dxd\tau\\
    &\quad\leq \int_{\mathbb{R}}|U_2|^2\,dx+\int_{0}^{t}\int_{\mathbb{R}} |U_2|^2 \,dxd\tau\lesssim Y_{0}^2.\label{arrr2}
\end{align}
Inserting \eqref{arrr1} and \eqref{arrr2} into \eqref{arrr}, we get \eqref{estimateW} with $\mu=1$.

\smallbreak
\noindent $\bullet$  {\textbf{Case 2:}} $1/2<\mu<1.$ In this case, let $\varphi: \mathbb{R}^{+}\rightarrow \mathbb{R}^{+}$ be a weight function to be determined later. Similarly to the case $\mu=1$, one gets
\begin{equation}\nonumber
\begin{aligned}
& \frac{d}{dt}\int_{\mathbb{R}}\frac{1}{2} \varphi(t+|x|) (|\partial_t W|^2 + A_{1,2} A_{2,1} \partial_x W  \cdot\partial_x W)\,dx\\
&\quad+\int_{\mathbb{R}} \Big( \varphi(t+|x|) A_{1,2} D A_{1,2}^{-1} \partial_t W \cdot\partial_t W -\frac{1}{2} \varphi'(t+|x|) A_{1,2} A_{2,1} \partial_x W  \cdot\partial_x W\Big)\,dx   \\
&=\int_{\mathbb{R}} \varphi'(t+|x|)\Big( \frac{1}{2} |\partial_t W|^2- \frac{x}{|x|} A_{1,2} A_{2,1}\partial_x W \cdot \partial_t W+\frac{1}{2}\frac{x}{|x|} A_{1,2} A_{2,2}A_{1,2}^{-1} \partial_t W  \cdot\partial_t W \Big) \,dx.
\end{aligned}
\end{equation}
After taking the inner product of $\eqref{dampedwave1}$ with $\varphi'(t+|x|) W$, we verify that
\begin{equation}\nonumber
\begin{aligned}
&\frac{d}{dt}\int_{\mathbb{R}} \Big( \varphi'(t+|x|) \partial_t W  W-\varphi''(t+|x|) |W|^2 +\frac{1}{2}\varphi'(t+|x|)  A_{1,2} D A_{1,2}^{-1} W\cdot W \Big) \,dx\\
&\quad+\int_{\mathbb{R}}\Big( \varphi'(t+|x|) A_{1,2} A_{2,1} \partial_x W \cdot\partial_x W +\frac{1}{2}\varphi'''(t+|x|) | W|^2  \Big)\,dx\\
&\quad-\int_{\mathbb{R}} \Big( \frac{1}{2}\varphi'''(t+|x|) +\varphi''(t+|x|)\delta_{0}(x)\Big)A_{1,2} A_{2,1}  W \cdot W\, dx\\
&=\int_{\mathbb{R}}\varphi'(t+|x|) |\partial_t W|^2\,dx. 
\end{aligned}
\end{equation}
Here we have used
\begin{equation}\nonumber
\begin{aligned}
&\int_{\mathbb{R}} \varphi'(t+|x|)  \partial_{t}^2 W \cdot W \,dx\\
&\quad=\frac{d}{dt}\int_{\mathbb{R}} \Big( \varphi'(t+|x|)  \partial_t W\cdot W -\frac{1}{2}\varphi''(t+|x|) |W|^2\Big) \,dx\\
&\quad\quad+\int_{\mathbb{R}} \Big(\frac{1}{2}\varphi'''(t+|x|) |W|^2 -\varphi'(t+|x|)|\partial_t W|^2\Big)\,dx,\\
&\int_{\mathbb{R}} \varphi'(t+|x|) A_{1,2} D A_{1,2}^{-1} \partial_t W \cdot W \,dx\\
&\quad=\frac{d}{dt}\int_{\mathbb{R}} \frac{1}{2}\varphi'(t+|x|)  A_{1,2} D A_{1,2}^{-1} W \cdot W \,dx-\int_{\mathbb{R}}\varphi''(t+|x|) A_{1,2} D A_{1,2}^{-1} W \cdot W  \,dx,
\end{aligned}
\end{equation}
and
\begin{equation}\nonumber
\begin{aligned}
&-\int_{\mathbb{R}} \varphi'(t+|x|) A_{1,2} A_{2,1} \partial_{x}^2 W \cdot W\, dx\\
&\quad =-\int_{\mathbb{R}} \Big(\frac{1}{2} \varphi'''(t+|x|)+\varphi''(t+|x|)\delta_{0}(x)\Big)A_{1,2} A_{2,1}  W  \cdot W\,dx\\
&\quad\quad-\int_{\mathbb{R}} \varphi'(t+|x|) A_{1,2} A_{2,1} \partial_x W \cdot \partial_x W \,dx,
\end{aligned}
\end{equation}
where $\delta_{0}(x)$ denotes the  Dirac function at $0$. Gathering the previous estimates we get the following inequality
\begin{equation}\label{ppppp2}
\begin{aligned}
\frac{d}{dt}\mathcal{W}_{\mu}(t)+\mathcal{H}_{\mu}(t)&=\int_{\mathbb{R}} \varphi'(t+|x|)\Big( \frac{3}{2} |\partial_t W|^2-\frac{x}{|x|} A_{1,2} A_{2,1}\partial_x W  \cdot\partial_t W\\&\quad\quad\quad\quad\quad\quad\quad\quad+\frac{1}{2}\frac{x}{|x|} A_{1,2} A_{2,2}A_{1,2}^{-1} \partial_t W\cdot  \partial_t W \Big)\, dx,
\end{aligned}
\end{equation}
with
\begin{equation}\nonumber
\begin{aligned}
\mathcal{W}_{\mu}(t):&=\int_{\mathbb{R}} \frac{1}{2} \varphi(t+|x|) (|\partial_t W|^2 + A_{1,2} A_{2,1} \partial_x W \cdot \partial_x W)\,dx\\
&\quad+\int_{\mathbb{R}} \Big( \varphi'(t+|x|) \partial_t W \cdot W-\frac{1}{2}\varphi''(t+|x|) |W|^2 \Big)\,dx\\
&\quad+\int_{\mathbb{R}}\frac{1}{2}\varphi'(t+|x|)  A_{1,2} D A_{1,2}^{-1} W \cdot W \, dx,\\
\mathcal{H}_{\mu}(t):&=\int_{\mathbb{R}} \Big( \varphi(t+|x|) A_{1,2} D A_{1,2}^{-1} \partial_t W\cdot \partial_t W +\frac{1}{2} \varphi'(t+|x|) A_{1,2} A_{2,1} \partial_x W \cdot \partial_x W\Big)\,dx  \\
&\quad+\int_{\mathbb{R}} \frac{1}{2}\varphi'''(t+|x|) | W|^2 \,dx\\
&\quad - \int_{\mathbb{R}} \bigg( \Big( \frac{1}{2}\varphi'''(t+|x|) +\varphi''(t+|x|)\delta_{0}(x)\Big)A_{1,2} A_{2,1}  W \cdot W \bigg)\, dx.
\end{aligned}
\end{equation}
In order to recover the coercivity estimates on $\mathcal{W}_{\mu}(t)$ and $\mathcal{H}_{\mu}(t)$, one requires that $\varphi(s)$ satisfies
\begin{align}
&\varphi'> 0,\quad\quad \varphi''< 0,\quad\quad \varphi'''> 0,\quad\quad \frac{1}{4}\varphi(t+|x|)\geq \frac{1}{\kappa_{1}}\varphi'(t+|x|).\label{varphi1}
\end{align}
Indeed, due to \eqref{Strong Dissipativity} and \eqref{varphi1}, for some constant $\kappa_1>0$, there holds that
\begin{equation}\nonumber
    \begin{aligned}
   \mathcal{W}_{\mu}(t)&\geq \int_{\mathbb{R}} \Big( \frac{1}{4}\varphi(t+|x|) |\partial_t W|^2+\kappa \varphi(t+|x|)|\partial_x W |^2\Big)\,dx\\
   &\quad+\int_{\mathbb{R}}\big(\frac{\kappa_{1}}{4}\varphi'(t+|x|)- \frac{1}{2}\varphi''(t+|x|) \big)|W|^2\,dx,
        \end{aligned}
\end{equation}
and
\begin{equation}\nonumber
    \begin{aligned}
    \mathcal{H}_{\mu}(t) &\geq \int_{\mathbb{R}} \Big(\kappa \varphi(t+|x|)  |\partial_t W|^2 +\frac{\kappa_{1}}{2} \varphi'(t+|x|)|\partial_x W|^2\Big)\,dx\\
    &\quad+\int_{\mathbb{R}} \frac{1}{2}\varphi'''(t+|x|)(|W|^2-A_{1,2} A_{2,1}W\cdot W) \,dx-\kappa_{1}\varphi''(t)|W(0,t)|^2\\
    &\geq \int_{\mathbb{R}} \Big(\kappa \varphi(t+|x|)  |\partial_t W|^2 +\frac{\kappa_{1}}{2} \varphi'(t+|x|)|\partial_x W|^2\Big)\,dx,
    \end{aligned}
\end{equation}
where we have used the positive definiteness of $A_{1,2}A_{2,1}$ with $|A_{1,2}A_{2,1}|\leq 1$. In addition, one has
    \begin{equation}\nonumber
    \begin{aligned}
    &\int_{\mathbb{R}} \varphi'(t+|x|)\Big( \frac{3}{2} |\partial_t W|^2- \frac{x}{|x|} A_{1,2} A_{2,1}\partial_x W \cdot \partial_t W+\frac{1}{2}\frac{x}{|x|} A_{1,2} A_{2,2}A_{1,2}^{-1} \partial_t W \cdot \partial_t W \Big)\, dx\\
    &\quad\leq \int_{\mathbb{R}} \Big( C \varphi'(t+|x|) | \partial_t W|^2 +\frac{\kappa_1}{4}  \varphi'(t+|x|) |\partial_x W|^2\Big)\, dx,
    \end{aligned}
\end{equation}
which can be controlled by the left-hand side of \eqref{ppppp2} provided that
\begin{align}
& C \varphi'(t+|x|)\leq \frac{\kappa}{2}\varphi(t+|x|).    \label{varphi2}
\end{align}
In addition, under the assumptions \eqref{a21}, one needs $\mu>1/2$ and
\begin{align}
& \varphi(s)\sim (1+s)^{2\mu-1},\quad\quad \varphi'(s)\sim (1+s)^{2\mu-2},    \label{varphi3}
\end{align}
so as to bound the initial energy $\mathcal{W}(0)$ by $Y_{0}^2$ in terms of \eqref{a21} and the Caffarelli-Kohn-Nirenberg inequality \eqref{CKNbest}. One can show that the function
\begin{align}
&\varphi(s)=(a+s)^{2\mu-1}\quad\text{with $\frac{1}{2}<\mu<1$ and some constant $a>\max\{\frac{4}{\kappa_{1}},\frac{2C}{\kappa}\}$}
\end{align}
 fulfills the conditions \eqref{varphi1}, \eqref{varphi2} and \eqref{varphi3}. Therefore, integrating \eqref{ppppp2} over $[0,t]$ and using \eqref{estimateThm1}, we obtain the desired inequality \eqref{estimateW}.

\end{proof}

\vspace{1ex}

\noindent
\textbf{Proof of Theorem \ref{ThmDecayGeneral2}:} In view of the estimate \eqref{estimateW} obtained in Lemma \ref{lemmaW} and the facts that $U_1=\partial_x W$, $U_{2}=-A_{1,2}^{-1}\partial_t W$ and that, for $x\in \mathbb{R}, t>0$ and $\mu \geq 1/2$, \[(1+t)^{2\mu-1}\leq (1+t+|x|)^{2\mu-1},\] we get the $L^2$ rate $(1+t)^{-\mu+\frac{1}{2}}$ of $U$ in \eqref{decay1112}. Applying the $L^2$ rate of $U$ in \eqref{decay1112} and Lemma \ref{decayineq} to the differential inequality \eqref{llddd}, we recover faster time-decay rates for $\partial_xU$ in \eqref{decay1112}. Finally, the faster decay rates for $U_{2}$ follows from the decay rates obtained for $\partial_xU$ and \eqref{this111}.
The proof of Theorem \ref{ThmDecayGeneral2} is now complete.

\section{Proof of Theorem \ref{ThmEuler1} and Theorem \ref{ThmEuler2}}
\label{sec:EulerLin}

\subsection{Asymptotic estimates for the compressible Euler equations with damping}

In this subsection, we apply the methods developed in Theorems \ref{DecayThm1}, \ref{ThmDecayGeneral1} and \ref{ThmDecayGeneral2} to a concrete nonlinear partially dissipative hyperbolic system: the damped compressible Euler equations \eqref{Euler} with a general pressure function $P(\rho)$ satisfying \eqref{P}.

% The long-time behaviour of system \eqref{Euler} is established as follows.

\subsection{Proof of global existence and time-decay estimates}\label{subsectionexistence}

To prove the global existence of the system \eqref{Euler}, we establish a-priori estimates as follows.

 \begin{lemma}\label{eulerpriori}$($A priori estimates$)$
 Let $(\rho,u)$ be the solution to the system \eqref{Euler} on $[0,T)$ for any given time $T>0$. Define
 \begin{equation}\label{Xt}
 \begin{aligned}
 X(t):&=\sup_{\tau\in[0,t]}\Big(\|(\rho-\bar{\rho},u)(\tau)\|_{H^{2}}^2+\tau\|u(\tau)\|_{L^2}^2+\tau\|\partial_x (\rho-\bar{\rho},u)(\tau)\|_{L^2}^2\Big)\\
 &\quad+\int_{0}^{t}\Big(\|\partial_x (\rho-\bar{\rho})(\tau)\|_{H^1}^2+\|u(\tau)\|_{H^{2}}^2+\tau\|\partial_x u(\tau)\|_{L^2}^2\Big)\,d\tau.
 \end{aligned}
 \end{equation}
 There exists a small constant $\delta_1$ independent of $T$ such that if
\begin{align}
    &  X(t)\leq \delta_1,\quad\quad 0<t<T,\label{smallness}
\end{align}
then there exists a generic constant $C_{0}>0$ such that
\begin{align}
    &X(t)\leq C_{0} \|(\rho_{0}-\bar{\rho},u_{0})\|_{H^2}^2,\quad\quad 0<t<T.\label{eulerpriorie}
\end{align}

 \end{lemma}

\begin{proof}
We use similar arguments to those used in the Subsection \ref{subsectionL}. Denote the purturbation
\[
n:=\rho-\bar{\rho}.
\]
It is easy to check that the basic energy equality for \eqref{Euler} holds:
\begin{equation}\label{EulerL2}
\begin{aligned}
\frac{d}{dt}E_{euler}(t)+\lambda \|u(t)\|_{L^2}^2=0.
\end{aligned}
\end{equation}
Here $E_{euler}(t)$ is given by
\[
E_{euler}(t):=\int_{\mathbb{R}}\Big( \frac{1}{2}\rho |u|^2 +\rho\int_{\bar{\rho}}^{\rho}\frac{P'(s)}{s}\,ds \Big)\,dx\sim \|(n,u)(t)\|_{L^2}^2.
\]
To derive higher-order estimates for $(n,u)$, we write  \eqref{Euler} as
\begin{equation}\label{Eulerr}
\left\{
\begin{aligned}
&\partial_{t} n+\rho \partial_x u=- u \partial_x n,\\
&\partial_{t} u +G(n)\partial_x n +\lambda  u= -u\partial_x u,\\
\end{aligned}
\right.
\end{equation}
where 
\[
G(n):=\frac{P'(\bar{\rho}+n)}{\bar{\rho}+n}>0.
\]
We have
\begin{equation}\label{eulerh1}
\begin{aligned}
  & \frac{d}{dt}\int_{\mathbb{R}}\Big( |\partial_{x}n|^2+\frac{\rho}{G(n)}|\partial_{x}u|^2\Big)\,dx+\int_{\mathbb{R}}\Big(\frac{2\lambda\rho }{G(n)}-\partial_{t}\frac{\rho}{G(n)}\Big)|\partial_{x}u|^2\,dx\\
  &\leq 4\|\partial_{x} n(t)\|_{L^{\infty}} \|\partial_{x} n(t)\|_{L^2}\|\partial_{x}u(t)\|_{L^2}\\
&\quad+2\Big\|\frac{\rho}{G(n)}\Big\|_{L^{\infty}_{t}(L^{\infty})}\big(\|\partial_{x} (u\partial_{x}u)(t)\|_{L^2}+\|\partial_{x}G(n)\partial_{x}n(t)\|_{L^2}\big) \|\partial_{x}u(t)\|_{L^2},
\end{aligned}
\end{equation}
and
\begin{equation}\label{eulerh11}
\begin{aligned}
  & \frac{d}{dt}\int_{\mathbb{R}}t\Big( |\partial_{x}n|^2+\frac{\rho}{G(n)}|\partial_{x}u|^2\Big)\,dx+t\int_{\mathbb{R}}\Big(\frac{2\lambda\rho }{G(n)}-\partial_{t}\frac{\rho}{G(n)}\Big)|\partial_{x}u|^2\,dx\\
  &\leq \int_{\mathbb{R}}\Big( |\partial_{x}n|^2+\frac{\rho}{G(n)}|\partial_{x}u|^2\Big)\,dx\\
  &\quad+4t\|\partial_{x} n(t)\|_{L^{\infty}} \|\partial_{x} n(t)\|_{L^2}\|\partial_{x}u(t)\|_{L^2}\\
&\quad+2t\Big\|\frac{\rho}{G(n)}\Big\|_{L^{\infty}_{t}(L^{\infty})}\big(\|\partial_{x} (u\partial_{x}u)(t)\|_{L^2}+\|\partial_{x}G(n)\partial_{x}n(t)\|_{L^2}\big) \|\partial_{x}u(t)\|_{L^2}.
\end{aligned}
\end{equation}
% We added weight functions in the energy estimates \eqref{eulerh1}-\eqref{eulerh11} to overcome the loss of derivatives due to the lack of symmetry of \eqref{Euler}. 

Unlike in the linear setting, $H^2$-regularity estimates are needed to control the nonlinear terms. The the system satisfied by $(\partial_{x}^2n, \partial_{x}^2 u)$ reads
\begin{equation}\label{Eulerr1}
\left\{
\begin{aligned}
&\partial_{t}\partial_{x}^2 n+u \partial_{xxx}^3 n+\rho\partial_{xxx}^3 u=\mathcal{R}_1,\\
&\partial_{t}\partial_{x}^2 u +u \partial_{xxx}^3 u+G(n)\partial_{xxx}^3 n +\lambda \partial_{x}^2 u=\mathcal{R}_{2},
\end{aligned}
\right.
\end{equation}
with the commutator terms
\[\mathcal{R}_1:=[u,\partial_{x}^2]\partial_x n+[n,\partial_{x}^2]\partial_x u \quad \text{and} \quad \mathcal{R}_2:=[u,\partial_{x}^2]\partial_x u+[G(n),\partial_{x}^2]\partial_x n.\]
It thence holds that
\begin{equation}\label{eulerh2}
    \begin{aligned}
        &\frac{d}{dt}\int_{\mathbb{R}}\Big( |\partial_{x}^2 n|^2+\frac{\rho}{G(n)} |\partial_{x}^2 u|^2\Big)\,dx+\int_{\mathbb{R}}\bigg( \frac{ 2\lambda\rho}{G(n)}- \partial_t \frac{\rho}{G(n)} \bigg) |\partial_{x}^2 u|^2 \,dx\\
        &\leq 2\|\partial_x n(t)\|_{L^{\infty}} \|\partial_{x}^2 n(t)\|_{L^2}\|\partial_{x}^2 u(t)\|_{L^2}+ \|\partial_x u(t)\|_{L^{\infty}}\|\partial_{x}^2 n(t)\|_{L^2}^2\\
        &\quad+\Big(\Big\|\frac{\rho}{G(n)}\Big\|_{L^{\infty}_{t}(L^{\infty})}\| \partial_x u(t)\|_{L^{\infty}}+\Big\|\partial_x\frac{\rho}{G(n)}\Big\|_{L^{\infty}_{t}(L^{\infty})}\| u(t)\|_{L^{\infty}}\Big)\|\partial_{x}^2u(t)\|_{L^2}^2\\
        &\quad+2\|\mathcal{R}_1(t)\|_{L^2}\|\partial_{x}^2n(t)\|_{L^2}+2\Big\|\frac{\rho}{G(n)}\Big\|_{L^{\infty}_{t}(L^{\infty})}\|\mathcal{R}_2(t)\|_{L^2} \|\partial_{x}^2 u(t)\|_{L^2}.
    \end{aligned}
\end{equation}
Furthermore, to capture time-decay information for $n$, from \eqref{Eulerr} we have
\begin{equation}\label{eulercross}
    \begin{aligned}
       &\frac{d}{dt}\int_{\mathbb{R}} (u\partial_x n+\partial_x u\partial_{x}^2 n)\,dx\\
       &\quad+\int_{\mathbb{R}}\Big( G(n)(|\partial_x n|^2+|\partial_x^2 n|^2)-\rho(|\partial_x u|^2+|\partial_x^2 u|^2)+\lambda u \partial_x n+\lambda \partial_x u \partial_x^2n \Big)\,dx\\
        &\leq \|u\partial_{x}n(t)\|_{H^1}\|\partial_x u(t)\|_{H^1}+\|u\partial_{x}u(t)\|_{H^1}\|\partial_x n(t)\|_{H^1}+\|\mathcal{R}_0(t)\|_{L^2}\|\partial_x^2n(t)\|_{L^2}.
    \end{aligned}
\end{equation}
Let $c_{1},c_{2}\in(0,1)$ be two constants to be chosen later. Define
\begin{equation}\nonumber
    \begin{aligned}
        \mathcal{L}_{euler}(t)&:=E_{euler}(t)+(1+c_{1}t)\int_{\mathbb{R}}\Big( |\partial_{x}n|^2+\frac{\rho}{G(n)}|\partial_{x}u|^2\Big)\,dx\\
        &\quad+\int_{\mathbb{R}}\Big( |\partial_{x}^2 n|^2+\frac{\rho}{G(n)} |\partial_{x}^2 u|^2\Big)\,dx+c_{2}\int_{\mathbb{R}} (u\partial_x n+\partial_x u\partial_{x}^2 n)\,dx,
    \end{aligned}
\end{equation}
and
\begin{equation}\nonumber
    \begin{aligned}
        \mathcal{D}_{euler}(t)&=\lambda \|u(t)\|_{L^2}^2+(1+c_{1}t)\int_{\mathbb{R}}\Big( \frac{ 2\lambda\rho}{G(n)}- \partial_t \frac{\rho}{G(n)} \Big) |\partial_{x}^2 u|^2 \,dx\\
        &\quad+\int_{\mathbb{R}}\Big( \frac{ 2\lambda\rho}{G(n)}- \partial_t \frac{\rho}{G(n)} \Big) |\partial_{x}^2 u|^2 \,dx\\
        &\quad+c_{2}\int_{\mathbb{R}}\Big( G(n)(|\partial_x n|^2+|\partial_x^2 n|^2)-\rho(|\partial_x u|^2+|\partial_x^2 u|^2)\\
        &\quad\quad\quad\quad\quad+\lambda u \partial_x n+\lambda \partial_x u \partial_x^2n \Big)\,dx.
    \end{aligned}
\end{equation}
Using \eqref{smallness} with $\delta_1$ suitably small, we have
\begin{align}
    &0<\frac{\bar{\rho}}{2P'(\bar{\rho})}\leq \frac{\rho}{G(n)}(x,t)\leq \frac{2\bar{\rho}}{P'(\bar{\rho})},\quad\quad (x,t)\in\mathbb{R}\times(0,T),\label{wss}
\end{align}
and
\begin{equation}
\begin{aligned}
&\|\partial_t \frac{\rho}{G(n)}\|_{L^{\infty}_{t}(L^{\infty}_{x})}\\
&\quad\leq C\|\partial_t n\|_{L^{\infty}_{t}(L^{\infty}_{x})}\\
&\quad\leq C\|u\|_{L^{\infty}_{t}(L^{\infty}_{x})}\|\partial_xn\|_{L^{\infty}_{t}(L^{\infty}_{x})}+C\|\partial_x u\|_{L^{\infty}_{t}(L^{\infty}_{x})}(1+\|n\|_{L^{\infty}_{t}(L^{\infty}_{x})})\leq \lambda.\label{wss1}
\end{aligned}
\end{equation}
Adjusting the coefficients $c_{1},c_{2}$ suitably and making use of \eqref{wss} and \eqref{wss1}, we obtain
\begin{equation}\label{eulersim}
    \begin{aligned}
    \mathcal{L}_{euler}(t)\sim \|(n,u)(t)\|_{H^2}^2+c_{1}t\|\partial_x (n,u)(t)\|_{L^2}^2,
    \end{aligned}
\end{equation}
and
\begin{equation}\label{eulerleq}
    \begin{aligned}
      \mathcal{D}_{euler}(t)&\gtrsim  \|u(t)\|_{H^2}^2+\|\partial_x n(t)\|_{H^1}^2+c_1 t\|\partial_x u(t)\|_{L^2}^2.
    \end{aligned}
\end{equation}
Then, it follows from \eqref{eulerh1}, \eqref{eulerh11}, \eqref{eulerh2}, \eqref{eulercross} and \eqref{wss} that
\begin{equation}
    \begin{aligned}
  &\frac{d}{dt}\mathcal{L}_{euler}(t)+\mathcal{D}_{euler}(t) \\ &\lesssim \|(\partial_x n,\partial_x u)(t)\|_{L^{\infty}} \|(\partial_{x} u,\partial_{x}n)(t)\|_{L^{2}}^2\\
    &\quad+ \|u\partial_{x}n(t)\|_{H^1}\|(n,\partial_x u)(t)\|_{H^1}+\|u\partial_{x}u(t)\|_{H^1}\|(\partial_x n, u)(t)\|_{H^1}\\
    &\quad+\|\mathcal{R}_1(t)\|_{L^2}\|\partial_{x}^2n(t)\|_{L^2}+\|\mathcal{R}_2(t)\|_{L^2} \|\partial_{x}^2 u(t)\|_{L^2}\\
     &\quad+ t\|\partial_{x} n(t)\|_{L^{\infty}} \|\partial_{x} n(t)\|_{L^2}\|\partial_{x}u(t)\|_{L^2}\\
     &\quad+t\big(\|\partial_{x} (u\partial_{x}u)\|_{L^2}+\|\partial_{x}G(n)\partial_{x}n(t)\|_{L^2}\big) \|\partial_{x}u(t)\|_{L^2}.\label{eulerlyapunov1}
    \end{aligned}
\end{equation}
The nonlinear terms on the right-hand side of \eqref{eulerlyapunov1} are analyzed as follows. First, by the Sobolev embedding one has
\begin{equation}\nonumber
    \begin{aligned}
    &\|(\partial_x n,\partial_x u)(t)\|_{L^{\infty}} \lesssim \|(\partial_x n,\partial_x u)(t)\|_{H^1}.
    \end{aligned}
\end{equation}
Since $H^1$ is an algebra, we obtain
\begin{equation}\nonumber
    \begin{aligned}
       & \|u\partial_{x}n(t)\|_{H^1}\|(n,\partial_x u)(t)\|_{H^1}+\|u\partial_{x}u(t)\|_{H^1}\|(\partial_x n, u)(t)\|_{H^1}\\
       &\quad\quad\quad\quad\quad\quad\quad\quad\quad\quad\quad\quad\quad\lesssim \|(n,u)(t)\|_{H^2}\|(\partial_x n, u)(t)\|_{H^1}.
    \end{aligned}
\end{equation}
From standard commutator estimates (cf. \cite{katocom}) and $H^1\hookrightarrow L^{\infty}$, one gets
\begin{equation}\nonumber
    \begin{aligned}
\|\mathcal{R}_1(t)\|_{L^2}&\lesssim \|[u,\partial_{x}^2]\partial_{x} n(t)\|_{L^2}+\|[n,\partial_{x}^2]\partial_x u(t)\|_{L^2}\\
&\lesssim \|(n,u)(t)\|_{H^2}\|\partial_{x}^2(n,u)(t)\|_{L^2},
       \end{aligned}
\end{equation}
and similarly,
\begin{equation}\nonumber
    \begin{aligned}
\|\mathcal{R}_2(t)\|_{L^2}&\lesssim \|[u,\partial_{x}^2]\partial_x u(t)\|_{L^2}+\|[G(n),\partial_{x}^2]\partial_x n(t)\|_{L^2}\\
&\lesssim \|(n,u)(t)\|_{H^2}\|\partial_{x}^2(n,u)(t)\|_{L^2}.
       \end{aligned}
\end{equation}
Concerning the time-weighted nonlinear terms, we have
\begin{equation}\nonumber
    \begin{aligned}
    &t\|\partial_{x} n(t)\|_{L^{\infty}} \|\partial_{x} n(t)\|_{L^2}\|\partial_{x}u(t)\|_{L^2}+t\big(\|\partial_{x} (u\partial_{x}u)\|_{L^2}+\|\partial_{x}G(n)\partial_{x}n(t)\|_{L^2}\big) \|\partial_{x}u(t)\|_{L^2}\\
    &\lesssim \|(\partial_{x}n,u)(t)\|_{L^2}\Big(t^{\frac{1}{2}} \|\partial_{x}(n,u)(t)\|_{L^2} \Big) \Big( t^{\frac{1}{2}} \|\partial_{x}u(t)\|_{L^2}\Big)\\
    &\lesssim \eta_0 t\|\partial_{x}u(t)\|_{L^2}^2+\frac{1}{\eta_0}\|(\partial_{x}n,u)(t)\|_{L^2}^2 t\|\partial_{x}(n,u)(t)\|_{L^2}^2.
    \end{aligned}
\end{equation}
Substituting the above estimates into \eqref{eulerlyapunov1}, choosing a suitable small constant $\eta_0>0$ and using  \eqref{smallness}, \eqref{eulersim} and \eqref{eulerleq}, we derive
\begin{equation}\label{eulerlyapunov}
    \begin{aligned}
    &\frac{d}{dt}\mathcal{L}_{euler}(t)+\|u(t)\|_{H^2}^2+\|\partial_x n(t)\|_{H^1}^2+t\|\partial_x u(t)\|_{L^2}^2\\
    &\quad\lesssim \|(\partial_{x}n,u)(t)\|_{L^2}\mathcal{L}_{euler}(t).
    \end{aligned}
\end{equation}
Employing Gr\"onwall's inequality to \eqref{eulerlyapunov} and using the fact that $\|(\partial_{x}n,u)(t)\|_{L^2}$ is uniformly integrable due to \eqref{smallness}, we get
\begin{equation}\label{asdf}
    \begin{aligned}
      &\|(n,u)(t)\|_{H^{2}}^2+t\|\partial_x (n,u)(t)\|_{L^2}^2\\
      &\quad+\int_{0}^{t}\Big(\|\partial_x n(\tau)\|_{H^1}^2+\|u(\tau)\|_{H^{2}}^2  +\tau\|\partial_x u(\tau)\|_{L^2}^2\Big)\,d\tau\lesssim \|(\rho_{0}-\bar{\rho},u_{0})\|_{H^2}^2.
    \end{aligned}
\end{equation}

Finally, taking the $L^2$ inner product of $\eqref{Eulerr}_{2}$ with $v$, we have
\begin{equation}\label{vinq}
    \begin{aligned}
    &\frac{d}{dt}\|u(t)\|_{L^2}^2+2\lambda\|u(t)\|_{L^2}^2\lesssim ( \|u\|_{L^{\infty}_{t}(L^{\infty}_{x})}\|\partial_x u(t)\|_{L^2}+ \|\partial_x n(t)\|_{L^2}) \|u(t)\|_{L^2}.
    \end{aligned}
\end{equation}
This, together with Gr\"onwall's inequality and \eqref{asdf}, leads to
\begin{equation}\nonumber
    \begin{aligned}
    \|u(t)\|_{L^2}&\lesssim e^{-\lambda t}\|u_{0}\|_{L^2}+\int_{0}^{t}e^{-\lambda(t-\tau)}\|\partial_x(n,u)(\tau)\|_{L^2}\\
    &\lesssim(1+t)^{-\frac{1}{2}} \|(\rho_{0}-\bar{\rho},u_{0})\|_{H^2},
    \end{aligned}
\end{equation}
which concludes the proof of Lemma \ref{eulerpriori}.

\end{proof}

\vspace{2ex}

\noindent
\textbf{Proof of Theorems \ref{ThmEuler1} and \ref{ThmEuler2}:} According to classical local well-posedness results (see e.g. \cite{HJR,BHN,Majdalocal,Serre,XK1}), there exists a time $T_{0}>0$ such that the system \eqref{Euler} associated to the initial datum $(\rho_{0},u_{0})$ admits a unique solution $(\rho,u)$ satisfying $(\rho-\bar{\rho}, u)\in C([0,T_0]; H^2 )$. Then, according to the a-priori estimates \eqref{eulerpriorie} established in Lemma \ref{eulerpriorie} and a standard bootstrap argument, one can extend the solution $(\rho,u)$ globally in time and recover the property  \eqref{eulerdecay1} as long as $C_{0} \|(\rho_{0}-\bar{\rho},u_{0})\|_{H^2}^2<\delta_1$. 

The time-decay estimates \eqref{eulerdecay2} and \eqref{eulerdecay3} in Theorem \ref{ThmEuler2} are proved in Lemmas \ref{lemmaeulerdecay1} and \ref{lemmaeulerdecay2} in the next subsection.

\subsection{Enhanced time-decay rates}

 \begin{lemma}\label{lemmaeulerdecay1}
Let $(\rho,u)$ be the global solution of the system \eqref{Euler} associated to the initial datum $(\rho_{0},u_{0})$, and $\rho_*$ be the global solution of the system \eqref{parabolicEuler} subject to the initial datum $\rho_0$. Assume  \eqref{eulera1} and $|x| u_0\in L^2 $, then $\rho-\rho_*$ verifies \eqref{errorEuler}. Additionally, if $|x|^{\mu}(\rho_0-\bar{\rho})$ holds with $0<\mu\leq 1$, then $(\rho,u)$ fulfills \eqref{eulerdecay2}. 
 \end{lemma}

\begin{proof}
Following a similar procedure to the one used in the proof of Theorem \ref{ThmDecayGeneral2}, we introduce the damped mode
\[
R^*:=\frac{P'(\bar{\rho})}{\bar{\rho}} n+\lambda \int^x_{-\infty}  u\,dy,
\]
so as to rewrite $\eqref{Eulerr}_{1}$ as
\begin{equation}\label{rhopara}
\begin{aligned}
&\partial_{t}\rho-\frac{P'(\bar{\rho})}{\lambda}\partial^2_x\rho= -\frac{\bar{\rho}}{\lambda}\partial_x^2 R^*-\partial_x(nu).
\end{aligned}
\end{equation}
One observes that $R^*$ satisfies a damped equation
\begin{equation}\label{R*}
\begin{aligned}
&\partial_t R^*+\lambda R^*=F_3,
\end{aligned}
\end{equation}
with 
\[
F_3:=- \frac{1}{2}u^2-I(n)+\frac{P'(\bar{\rho})}{\bar{\rho}}\partial_{t} n,\quad \text{and} \quad I(n):=\int_{0}^{n}\Big(\frac{P'(\bar{\rho}+s)}{\bar{\rho}+s}-\frac{P'(\bar{\rho})}{\bar{\rho}}\Big)\,ds.
\]
The proof of the decay estimates \eqref{errorEuler}-\eqref{eulerdecay2} is split in six steps.

\smallbreak
\noindent $\bullet$  \emph{Step 1: Decay of $R^*$}. Let $|x| u_0\in L^2 $. From \eqref{R*}, we have
\begin{equation}\label{R*es}
\begin{aligned}
\|R^*(t)\|_{L^2}&\leq e^{-\lambda t}\|R^*(0)\|_{L^2}+\int_{0}^{t} e^{-\lambda (t-\tau)}\|F_3(\tau)\|_{L^2}\,d\tau.
\end{aligned}
\end{equation}
By virtue of \eqref{CKNbest}, the first term on the right-hand side of \eqref{R*es} is controlled by
\begin{equation}\label{R*es1}
\begin{aligned}
\|R^*(0)\|_{L^2}\lesssim \|\rho_0-\bar{\rho}\|_{L^2}+\| |x| u_0\|_{L^2}.
\end{aligned}
\end{equation}
We now estimate the nonlinear term $F_3$. Recalling the estimate \eqref{eulerdecay1} and using Gagliardo-Nirenberg inequality, we get
\begin{equation}\label{R*es2}
\begin{aligned}
\|u^2(t)\|_{L^2}&\leq \|u(t)\|_{L^2} \|u(t)\|_{L^{\infty}}\\
&\lesssim \|u(t)\|_{L^2}^{\frac{3}{2}}\|\partial_x u(t)\|_{L^2}^{\frac{1}{2}}\lesssim \|(\rho_0-\bar{\rho},u_0)\|_{H^2} (1+t)^{-1}.
\end{aligned}
\end{equation}
With $\eqref{Eulerr}_{1}$, we have
\begin{equation}\label{R*es3}
\begin{aligned}
\|\partial_{t} n(t)\|_{L^2}&\leq (\bar{\rho}+\|n\|_{L^{\infty}_{t}(L^{\infty})})\|u_x(t)\|_{L^2}+\|u\|_{L^{\infty}_{t}(L^{\infty})}\|\partial_x n(t)\|_{L^2}\\
&\lesssim \|(\rho_0-\bar{\rho},u_0)\|_{H^2} (1+t)^{-\frac{1}{2}}.
\end{aligned}
\end{equation}
Since $I(0)=I'(0)=0$ holds,  \eqref{eulerdecay1} and Gagliardo-Nirenberg inequality ensure that
\begin{equation}\label{R*es4}
\begin{aligned}
\|I(n)(t)\|_{L^2}&\lesssim \|n(t)\|_{L^{\infty}} \|n(t)\|_{L^2}\\
&\lesssim \|n(t)\|_{L^2}^{\frac{3}{2}} \|\partial_x n(t)\|_{L^2}^{\frac{1}{2}}\lesssim \|(\rho_0-\bar{\rho},u_0)\|_{H^2} (1+t)^{-\frac{1}{4}}.
\end{aligned}
\end{equation}
Putting the above estimates \eqref{R*es1}-\eqref{R*es4} into \eqref{R*es}, we obtain the decay of $R^*$ as follows
\begin{equation}\label{decayR*}
\begin{aligned}
\|R^*(t)\|_{L^2}&\lesssim (\|(\rho_0-\bar{\rho},u_0)\|_{H^2}+\| |x|u_0\|_{L^2})(1+t)^{-\frac{1}{4}}.
\end{aligned}
\end{equation}

\smallbreak
\noindent $\bullet$  \emph{Step 2: Decay of $\rho-\rho_*$}. Next, we aim to establish the stability of the parabolic profile, i.e. \eqref{errorEuler}. By \eqref{parabolicEuler} and \eqref{rhopara}, the error $\rho-\rho_*$ solves
\begin{equation}\label{rhorho*error}
\begin{aligned}
&\partial_{t}(\rho-\rho_*)-\frac{P'(\bar{\rho})}{\lambda}\partial^2_x(\rho-\rho_*)= -\frac{\bar{\rho}}{\lambda}\partial_x^2 R^*-\partial_x(nu),
\end{aligned}
\end{equation}
with the initial datum $(\rho-\rho_*)(0,x)=0$. Thus, Duhamel's principle for \eqref{rhorho*error} implies
\begin{equation}\label{duhamelerroreuler}
\begin{aligned}
\rho-\rho_*=-\int_{0}^{t}e^{\frac{P'(\bar{\rho})}{\lambda}\partial^2_x (t-s)}\Big(\frac{\bar{\rho}}{\lambda}\partial_x^2 R^*+\partial_x(nu) \Big)\,ds.
\end{aligned}
\end{equation}
Similarly to \eqref{Gs1}-\eqref{Gs}, one deduces from \eqref{eulerdecay1} and \eqref{decayR*} that for $0< \var <1/2$
\begin{equation}
\begin{aligned}
\|e^{\frac{P'(\bar{\rho})}{\lambda}\partial^2_x (t-s)}\partial_x^2 R^*(s)\|_{L^2}&\lesssim (t-s)^{-(1-\var)} (\|R^*(s)\|_{L^2}+\|\partial_x R^*(s)\|_{L^2})\\
&\lesssim (t-s)^{-(1-\var)} s^{-\frac{1}{4}}(\|(\rho_0-\bar{\rho},u_0)\|_{H^2}+\| |x|u_0\|_{L^2})\\
&\quad+(t-s)^{-(1-\var)} s^{-\frac{1}{2}}\|(\rho_0-\bar{\rho},u_0)\|_{H^2}.
\end{aligned}
\end{equation}
Owing to \eqref{eulerdecay1} and
\[
\|nu(s)\|_{L^2}\leq \|u(s)\|_{L^2}\|n(s)\|_{L^{\infty}}\lesssim \|u(s)\|_{L^2}\|n(s)\|_{L^{2}}^{\frac{1}{2}}\|\partial_x n(s)\|_{L^2}^{\frac{1}{2}},
\]
we have
\begin{equation}
\begin{aligned}
&\|e^{\frac{P'(\bar{\rho})}{\lambda}\partial^2_x (t-s)}\partial_x(nu)(s)\|_{L^2}\\
&\quad\lesssim (t-s)^{-\frac{1}{2}} \|nu(s)\|_{L^2}\lesssim (t-s)^{-\frac{1}{2}}  s^{-\frac{3}{4}} \|(\rho_0-\bar{\rho},u_0)\|_{H^2}.
\end{aligned}
\end{equation}
Hence, applying $L^2$-norm of \eqref{duhamelerroreuler} yields
\begin{equation}\nonumber
\begin{aligned}
\|(\rho-\rho^*)(t)\|_{L^2}&\lesssim \int_{0}^{t}  (t-s)^{-(1-\var)} s^{-\frac{1}{4}}\,ds   \Big(\|(\rho_0-\bar{\rho},u_0)\|_{H^2}+\| |x|u_0\|_{L^2} \Big)\\
&\quad+\bigg(\int_{0}^{t}(t-s)^{-(1-\var)} s^{-\frac{1}{4}}\,ds+\int_{0}^{t}(t-s)^{-\frac{1}{2}} s^{-\frac{3}{4}}\,ds\bigg) \|(\rho_0-\bar{\rho},u_0)\|_{H^2}\\
&\leq \bigg(t^{-\frac{1}{4}+\var}+t^{-\frac{1}{2}+\var}+t^{-\frac{1}{4}}\bigg) (\|(\rho_0-\bar{\rho},u_0)\|_{H^2}+\| |x|u_0\|_{L^2}).
\end{aligned}
\end{equation}
Together with the uniform $L^2$-bound of $\rho-\rho^*$, the error $\rho-\rho^*$ satisfies the decay estimate \eqref{errorEuler} for any $0<\var<\frac{1}{4}$.

\smallbreak
\noindent $\bullet$ \emph{Step 3: Decay of $\rho-\bar{\rho}$ for $0<\mu<1/2$}. We assume further
\[
\widetilde{X}(0):=\|(\rho_0-\bar{\rho},u_0)\|_{H^2}+\||x|^{\mu}(\rho_0-\bar{\rho})\|_{L^2}+\||x|u_0\|_{L^2}<\infty.
\]
Then, for all $0<\mu\leq 1$, a direct application of Lemma \ref{lemmaparabolic} yields
\begin{equation}\label{rho*decay}
\left\{
\begin{aligned}
&\|(\rho^*-\bar{\rho})(t)\|_{L^2}\lesssim (1+t)^{-\frac{\mu}{2}} \|(1+|x|^{\mu})(\rho_0-\bar{\rho})\|_{L^2},\\
&\|\partial_x^k(\rho^*-\bar{\rho})(t)\|_{L^2}\lesssim (1+t)^{-\frac{k}{2}-\frac{\mu}{2}} \|(1+|x|^{\mu})(\rho_0-\bar{\rho})\|_{L^2},\quad k=1,2,...
\end{aligned}
\right.
\end{equation}
In the case $0<\mu<1/2$, the combination of \eqref{errorEuler} with $0<\var<\frac{1-2\mu}{4}$ and \eqref{rho*decay} yields
\begin{equation}
\begin{aligned}
\|(\rho-\bar{\rho})(t)\|_{L^2}&\leq \|(\rho^*-\bar{\rho})(t)\|_{L^2}+\|(\rho-\rho^*)(t)\|_{L^2}\lesssim (1+t)^{-\frac{\mu}{2}}\widetilde{X}(0).
\end{aligned}
\end{equation}

\smallbreak
\noindent $\bullet$ \emph{Step 4: Decay of $\rho-\bar{\rho}$ for $1/2\leq \mu< 1$}. When $\mu\geq 1/2$, the $L^2$-decay rate of the error $\rho-\rho^*$ in \eqref{errorEuler} is not enough. To overcome this difficulty, one needs to improve the decay of $\rho-\rho^*$. In the case $1/2\leq \mu< 1$, the lowest term $I(n)$ on the right-hand side of \eqref{R*} can be handled as
\begin{equation}\label{improveIn}
\begin{aligned}
\|I(n)(t)\|_{L^2}&\lesssim \|n(t)\|_{L^{\infty}} \|n(t)\|_{L^2}\\&\lesssim \|n(t)\|_{L^2}^{\frac{3}{2}} \|\partial_x n(t)\|_{L^2}^{\frac{1}{2}}\\
&\lesssim \|n(t)\|_{L^2}^{\frac{1}{2}} \|\partial_x n(t)\|_{L^2}^{\frac{1}{2}}( \|(\rho^*-\bar{\rho})(t)\|_{L^2}+\|(\rho-\rho^*)(t)\|_{L^2})\\
&\lesssim (1+t)^{-\frac{1}{4}-\frac{\mu}{2}} X(0) \bigg( \widetilde{X}(0)+ \sup_{\tau\in[0,t]}(1+\tau)^{\frac{\mu}{2}}\|(\rho-\rho^*)(\tau)\|_{L^2} \bigg),
\end{aligned}
\end{equation}
where we used $n=\rho^*-\bar{\rho}+\rho-\rho^*$, \eqref{eulerdecay1} and \eqref{rho*decay}. By \eqref{R*es}-\eqref{R*es3}, \eqref{improveIn} and $\frac{1}{4}+\frac{\mu}{2}\geq \frac{1}{2}$, we have
\begin{equation}
\begin{aligned}
\|R^*(t)\|_{L^2}&\leq\widetilde{X}(0)(1+t)^{-\frac{1}{2}}+X(0)(1+t)^{-\frac{1}{2}}  \sup_{\tau\in[0,t]}(1+\tau)^{\frac{\mu}{2}}\|(\rho-\rho^*)(\tau)\|_{L^2}.
\end{aligned}
\end{equation}
On the other hand, the global existence result guarantees that
\begin{equation}
\begin{aligned}
\|\partial_xR^*(t)\|_{L^2}&\lesssim \|\partial_x n(t)\|_{L^2}+\|u(t)\|_{L^2}\lesssim X(0)(1+t)^{-\frac{1}{2}}.
\end{aligned}
\end{equation}
Hence, following the computations done in Step 2, we arrive at
\begin{equation}\label{imrpoveR*}
\begin{aligned}
&\|e^{\frac{P'(\bar{\rho})}{\lambda}\partial^2_x (t-s)}\partial_x^2 R^*(s)\|_{L^2}\\
&\quad \lesssim (t-s)^{-(1-\var)} s^{-\frac{1}{2}}\Big(\|(\rho_0-\bar{\rho},u_0)\|_{H^2}+\| |x|^{\mu}(\rho_0-\bar{\rho})\|_{L^2}+\| |x|u_0\|_{L^2})\Big)\\
&\quad\quad+ (t-s)^{-(1-\var)} s^{-\frac{1}{2}}\|(\rho_0-\bar{\rho},u_0)\|_{H^2} \sup_{\tau\in[0,t]}(1+\tau)^{\frac{\mu}{2}}\|(\rho-\rho^*)(\tau)\|_{L^2}.
\end{aligned}
\end{equation}
Similar calculations leads to 
\begin{equation}\label{imrpovedxnu}
\begin{aligned}
&\|e^{\frac{P'(\bar{\rho})}{\lambda}\partial^2_x (t-s)}\partial_x (nu)(s)\|_{L^2}\\
&\lesssim (t-s)^{-\frac{1}{2}} \|u(s)\|_{L^{2}}^{\frac{1}{2}}\|\partial_x u(s)\|_{L^{\infty}} (\|(\rho^*-\bar{\rho})(s)\|_{L^2}+\|(\rho^*-\bar{\rho})(s)\|_{L^2})\\
&\lesssim (t-s)^{-\frac{1}{2}} s^{-\frac{1}{2}-\frac{\mu}{2}} X(0)\Big( \widetilde{X}(0)+\sup_{\tau\in[0,t]}(1+\tau)^{\frac{\mu}{2}}\|(\rho-\rho^*)(\tau)\|_{L^2}\Big).
\end{aligned}
\end{equation}
For $\var\in (0,\frac{1-\mu}{2})$, we combine \eqref{duhamelerroreuler}, \eqref{imrpoveR*} and \eqref{imrpovedxnu} to get
\begin{equation}
\begin{aligned}
&\|(\rho-\rho^*)(t)\|_{L^2}\leq \Big(t^{-\frac{1}{2}+\var}+t^{-\frac{\mu}{2}}\Big)\Big( \widetilde{X}(0)+\sup_{\tau\in[0,t]}(1+\tau)^{\frac{\mu}{2}}\|(\rho-\rho^*)(\tau)\|_{L^2}\Big).
\end{aligned}
\end{equation}
Since $\rho-\rho^*$ is uniformly bounded in $L^2$,  one has
\begin{equation}
\begin{aligned}
&\sup_{\tau\in[0,t]}(1+\tau)^{\frac{\mu}{2}}\|(\rho-\rho^*)(\tau)\|_{L^2}\\
&\quad\leq  X(0)\widetilde{X}(0)+X(0)\sup_{\tau\in[0,t]}(1+\tau)^{\frac{\mu}{2}}\|(\rho-\rho^*)(\tau)\|_{L^2}.
\end{aligned}
\end{equation}
Together with the fact that $X(0)$ is suitably small, this yields
\begin{equation}\label{improveerrormmm}
\begin{aligned}
&\|(\rho-\rho^*)(t)\|_{L^2}\leq (1+t)^{-\frac{\mu}{2}}\widetilde{X}(0),\quad\quad \frac{1}{2}\leq \mu<1.
\end{aligned}
\end{equation}
By \eqref{rho*decay} and \eqref{improveerrormmm}, $\rho-\bar{\rho}$ has the decay estimate $\eqref{eulerdecay2}_1$ for $1/2\leq \mu<1$.

\smallbreak
\noindent $\bullet$ \emph{Step 5: Decay of $\rho-\bar{\rho}$ for $\mu= 1$}. We now deal with the limit case $\mu=1$. It suffices to improve the $L^2$-rate of $\|(\rho-\rho^*)(t)\|_{L^2}$ to $(1+t)^{-1/2}$. To this end, following the idea in Lemma \ref{lemmaerror}, we consider the system satisfied by $(\rho-\rho^*, u)$:
\begin{equation}\label{Eulerrrr}
\left\{
\begin{aligned}
&\partial_{t}(\rho-\rho^*)+\bar{\rho}\partial_x u=F_{1}-\frac{P'(\bar{\rho})}{\lambda}\partial_x^2\rho^*,\\
&\partial_{t}u +\frac{P'(\bar{\rho})}{\bar{\rho}}\partial_x (\rho-\rho^*) +\lambda u=F_{2}-\frac{P'(\bar{\rho})}{\bar{\rho}}\partial_x\rho^*,\\
&(\rho-\rho^*,u)|_{t=0}=(0,u_{0}).
\end{aligned}
\right.
\end{equation}
Doing $L^2$-energy estimate of \eqref{Eulerrrr} with the time-weight $t$ gives
\begin{equation}\label{weighterroreuler}
\begin{aligned}
&t\|(\rho-\rho^*,u)(t)\|_{L^2}^2+\int_{0}^{t}\tau\|u(\tau)\|_{L^2}^2\,d\tau\\
&\quad\lesssim \int_{0}^{t}\|(\rho-\rho^*,u)(\tau)\|_{L^2}^2\,d\tau\\
&\quad\quad+\int_{0}^{t} \tau\int_{\mathbb{R}}\bigg( (F_{1}-\frac{P'(\bar{\rho})}{\lambda}\partial_x^2\rho^*)(\rho-\rho^*)+(F_{2}-\frac{P'(\bar{\rho})}{\bar{\rho}}\partial_x\rho^*) u \bigg)\,dxd\tau.
\end{aligned}
\end{equation}
To bound the first term on the right-hand side of \eqref{weighterroreuler}, we recall that the $L^2(0,t;L^2)$-estimate of $u$ has been obtained in \eqref{eulerpriorie}, and we take the inner product of \eqref{rhorho*error}  with $-\int^{x}_{-\infty}\int^{y}_{-\infty}(\rho-\rho^*)(z,t)\,dzdy$ such that
\begin{equation}\label{errorL2disEuler}
\begin{aligned}
\int_{0}^{t} \|(\rho-\rho^*)(\tau)\|_{L^2}^2\,d\tau&\lesssim  \int_{0}^{t}(\|\partial_x R^*(\tau)\|_{L^2}^2+\|nu(\tau)\|_{L^2}^2)\,d\tau\lesssim X(0).
\end{aligned}
\end{equation}
In order to bound the second term in \eqref{weighterroreuler}, arguing similarly as in  \eqref{matcalL*}-\eqref{mu1N}, we perform the energy argument on \eqref{parabolicEuler} to obtain
\begin{equation}\label{rho*mu1}
\begin{aligned}
&\sum_{0\leq k'\leq k-1} t^{k'+1}\|\partial_x^{k'}(\rho^*-\bar{\rho})(t)\|_{L^2}^2+\sum_{0\leq k'\leq k-1 }\int_{0}^{t} \tau^{k'+1}\|\partial_x^{k'+1}(\rho^*-\bar{\rho})(\tau)\|_{L^2}^2\,d\tau\\
&\quad\lesssim \| |x|(\rho_0-\bar{\rho})\|_{L^2}^2,
\end{aligned}
\end{equation}
for any $k=1,2,...$. Therefore, taking advantage of \eqref{errorL2disEuler} and \eqref{rho*mu1}, we have
\begin{equation}\nonumber
\begin{aligned}
\int_{0}^{t}\tau \int_{\mathbb{R}} |\partial_x^2\rho^* (\rho-\rho^*)|\,dxd\tau&\leq \int_{0}^{t}(\tau^2 \|\partial_x^2\rho^*(\tau)\|_{L^2}^2+\|(\rho-\rho^*)(\tau)\|_{L^2}^2)\,d\tau\\
&\lesssim \widetilde{X}(0),
\end{aligned}
\end{equation}
and, for some constant $\eta_1>0$ to be chosen later,
\begin{equation}\nonumber
\begin{aligned}
&\int_{0}^{t}\tau \int_{\mathbb{R}} |\partial_x\rho^* u|\,dxd\tau\lesssim \eta_1 \int_{0}^{t}\tau \|u(\tau)\|_{L^2}^2d\tau+\frac{1}{\eta_1}  \int_{0}^{t}\tau  \|\partial_x (\rho^*-\bar{\rho})(\tau)\|_{L^2}^2\,d\tau.
\end{aligned}
\end{equation}
Integrating by parts, we obtain
\begin{equation}\nonumber
\begin{aligned}
&\int_{0}^{t}\tau\int_{\mathbb{R}} F_1 (\rho-\rho^*) \,dxd\tau\\
&\quad=\int_{0}^{t}\tau \int_{\mathbb{R}}\partial_x( (\rho-\rho^*)u) (\rho-\rho^*) \,dxd\tau+\int_{0}^{t}\tau \partial_x( (\rho^*-\bar{\rho})u) (\rho-\rho^*) \,dxd\tau\\
&\quad\leq \int_{0}^{t}\tau \int_{\mathbb{R}} \Big(-\frac{1}{2}u_x (\rho-\rho^*)^2+ \partial_x(\rho^*-\bar{\rho}) u (\rho-\rho^*)+(\rho^*-\bar{\rho}) \partial_x u (\rho-\rho^*) \Big)\,dxd\tau\\
&\quad\leq \sup_{\tau\in[0,t]} \tau \|(\rho-\rho^*)(\tau)\|_{L^2}^2 \int_0^t \|u_x(\tau)\|_{L^{\infty}}d\tau+ \|(\rho-\bar{\rho},\rho^*-\bar{\rho})\|_{L^{\infty}_{t}(L^{\infty})}^2 \tau\|u(\tau)\|_{L^2}^2\\
&\quad+\Big(\sup_{\tau\in[0,t]} \tau \|(\rho-\rho^*)(\tau)\|_{L^2}^2 \Big)^{\frac{1}{2}} \Big(\int_0^t \tau \|\partial_x u(\tau)\|_{L^2}^2\,d\tau\Big)^{\frac{1}{2}}   \Big(\int_0^t \|(\rho -\rho^*)(\tau)\|_{L^2}^2\,d\tau\Big)^{\frac{1}{2}}.
\end{aligned}
\end{equation}
Similarly, one has
\begin{equation}\nonumber
\begin{aligned}
&\int_{0}^{t} \tau\int_{\mathbb{R}} F_2 u\,dxd\tau\\
&\quad\lesssim \|\partial_x u\|_{L^{\infty}_{t}(L^{\infty})}\int_{0}^{t}\tau \|u(\tau)\|_{L^2}^2\,d\tau+\eta_1 \int_{0}^{t}\tau \|u(\tau)\|_{L^2}^2\,d\tau\\
&\quad\quad+\frac{1}{\eta_1} \int_0^t \|\partial_x n(\tau)\|_{L^2}^2\,d\tau \Big(\sup_{\tau\in[0,t]} \tau \|\rho^*-\bar{\rho})(\tau)\|_{L^2}^2+\sup_{\tau\in[0,t]} \tau \|(\rho-\rho^*)(\tau)\|_{L^2}^2\Big).
\end{aligned}
\end{equation}
Substituting the above estimates into \eqref{weighterroreuler} and making use of \eqref{eulerpriorie} and \eqref{errorL2disEuler}, we end up with
\begin{equation}\label{mmdgdgdgg}
\begin{aligned}
&t\|(\rho-\rho^*,u)(t)\|_{L^2}^2+\int_{0}^{t}\tau\|u(\tau)\|_{L^2}^2\,d\tau\\
&\quad\lesssim (1+\frac{1}{\eta_1})\widetilde{X}(0)\\
&\quad\quad+(X(0)+\eta_1)\Big(\sup_{\tau\in[0,t]}\tau\|(\rho-\rho^*,u)(\tau)\|_{L^2}^2+\int_{0}^{t}\tau\|u(\tau)\|_{L^2}^2\,d\tau\Big).
\end{aligned}
\end{equation}
Choosing a suitably small constant $\eta_1$ and recalling that $X(0) \ll 1$, we derive the $(1+t)^{-\frac{1}{2}}$ time-decay estimates of $\|(\rho-\rho^*)(t)\|_{L^2}$. Together with \eqref{rho*decay}, we obtain the decay rate of $\rho-\bar{\rho}$ in $\eqref{eulerdecay2}_1$ with  $\mu=1$.

\smallbreak
\noindent $\bullet$ \emph{Step 6: Decay of $u$ and $\partial_x(\rho-\bar{\rho},u)$}. Since we showed $\eqref{eulerdecay2}_1$ for all $0<\mu\leq 1$, applying Lemma \ref{decayineq} to the Lyapunov inequality \eqref{eulerlyapunov} gives
\begin{equation}
\begin{aligned}
&\|\partial_x(\rho-\bar{\rho},u)(t)\|_{L^2}\lesssim (1+t)^{-\frac{\mu}{2}-\frac{1}{2}}\widetilde{X}(0).\nonumber
\end{aligned}
\end{equation}
This, together with $\eqref{eulerdecay2}_1$ and \eqref{vinq}, gives rise to the desired decay estimate of $u$. We thus finish the proof of Lemma \ref{lemmaeulerdecay1}.

\end{proof}

 \begin{lemma}\label{lemmaeulerdecay2}
Let $(\rho,u)$ be the global solution to the Cauchy problem of the system \eqref{Euler} associated with the initial datum $(\rho_{0},u_{0})$. In addition to \eqref{eulera1}, assume $|x|^{\mu}(\rho_{0}-\bar{\rho})\in L^{2} $,  $|x|^{\mu-1/2}u_{0}\in L^{2} $ for $1/2<\mu\leq 1$, $\partial_t\rho|_{t=0}=-\partial_x(\rho_{0}u_{0})$ and $P'(\bar{\rho})\leq 1$. Then, \eqref{eulerdecay3} holds.
 \end{lemma}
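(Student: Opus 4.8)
\textit{Strategy of proof of Lemma~\ref{lemmaeulerdecay2}.} The plan is to transpose the wave-formulation argument of Theorem~\ref{ThmDecayGeneral2} to the quasilinear setting of \eqref{Euler}. Writing $n:=\rho-\bar\rho$, I would introduce the density antiderivative $W(x,t):=\int_{-\infty}^{x}n(y,t)\,dy$, so that $\partial_x W=n$. Integrating the continuity equation $\partial_t n+\partial_x(\rho u)=0$ over $(-\infty,x)$ and using $\rho u\to 0$ as $x\to-\infty$ gives $\partial_t W=-\rho u$; in particular $|\partial_t W|\sim|u|$, so that controlling the wave energy $\|(\partial_t W,\partial_x W)(t)\|_{L^2}^2$ will control $\|(n,u)(t)\|_{L^2}^2$. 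Differentiating $\partial_t W=-\rho u$ once more in time and inserting the momentum equation in conservative form $\partial_t(\rho u)+\partial_x(\rho u^2+P(\rho))=-\lambda\rho u$, one obtains the quasilinear damped wave equation
\[
\partial_{tt}^2 W+\lambda\,\partial_t W-P'(\bar\rho+\partial_x W)\,\partial_{xx}^2 W=\partial_x\!\left(\frac{(\partial_t W)^2}{\bar\rho+\partial_x W}\right).
\]
Decomposing $P'(\rho)\partial_{xx}^2 W=P'(\bar\rho)\partial_{xx}^2 W+(P'(\rho)-P'(\bar\rho))\partial_{xx}^2 W$, I would treat $\mathcal N:=(P'(\rho)-P'(\bar\rho))\partial_{xx}^2 W+\partial_x\!\left((\partial_t W)^2/\rho\right)$ as a nonlinear perturbation, so that the principal part is exactly the constant-coefficient damped wave operator analysed in Lemma~\ref{lemmaW} (with $A_{1,2}A_{2,1}=P'(\bar\rho)$, $A_{1,2}A_{2,2}A_{1,2}^{-1}=0$ and $A_{1,2}DA_{1,2}^{-1}=\lambda$).

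Next I would reproduce the space-time weighted energy computations of Lemma~\ref{lemmaW}: test the wave equation against $\varphi(t+|x|)\partial_t W$ and against $\varphi'(t+|x|)W$, with $\varphi(s)=1+s$ when $\mu=1$ and $\varphi(s)=(a+s)^{2\mu-1}$ (for $a$ large) when $1/2<\mu<1$, keeping track of the additional nonlinear contributions $\int_{\mathbb R}\varphi\,\mathcal N\,\partial_t W\,dx$ and $\int_{\mathbb R}\varphi'\,\mathcal N\,W\,dx$. After integration by parts these reduce to weighted integrals of products of $n,u,\partial_x n,\partial_x u$ (recall $\partial_{tx}^2 W=-\partial_x(\rho u)$), and the crucial point is that each of them is either (i) a small constant times the coercive dissipation of Lemma~\ref{lemmaW}, absorbable on the left thanks to $\|(\rho-\bar\rho,u)\|_{H^2}\ll1$, or (ii) time-integrable once one invokes the a priori bounds \eqref{eulerpriorie} (in particular $\int_0^t(\|u\|_{H^2}^2+\|\partial_x n\|_{H^1}^2)\,d\tau\lesssim\delta_0^2$) together with the first-order decay $\|u(t)\|_{L^2}+\|\partial_x(n,u)(t)\|_{L^2}\lesssim(1+t)^{-1/2}$ from Theorem~\ref{ThmEuler1}; the hypothesis $P'(\bar\rho)\le 1$ is used to keep the structural constants aligned. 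This produces the nonlinear analogue of \eqref{estimateW},
\[
\int_{\mathbb R}(1+t+|x|)^{2\mu-1}\big(|\partial_t W|^2+|\partial_x W|^2\big)\,dx+\int_0^t\!\!\int_{\mathbb R}\!\big((1+\tau+|x|)^{2\mu-1}|\partial_t W|^2+(1+\tau+|x|)^{2\mu-2}|\partial_x W|^2\big)\,dx\,d\tau\le C,
\]
with $C$ depending on $\delta_0$ and the weighted data norms; since $(1+t)^{2\mu-1}\le(1+t+|x|)^{2\mu-1}$ and $n=\partial_x W$, this yields $\|(\rho-\bar\rho)(t)\|_{L^2}\lesssim(1+t)^{-\mu+1/2}$, i.e.\ \eqref{eulerdecay3}.

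For the faster rates \eqref{eulerdecay4} for $\partial_x U$ and $u$, I would multiply the nonlinear Lyapunov inequality \eqref{eulerlyapunov} by $t^{2\mu-1}$ and integrate over $[0,t]$ (the boundary term at $t=0$ vanishes since $2\mu-1>0$), as at the end of the proof of Theorem~\ref{ThmDecayGeneral2}. The right-hand side is then $\lesssim\int_0^t\tau^{2\mu-2}\big(\|(n,u)(\tau)\|_{H^2}^2+\tau\|\partial_x(n,u)(\tau)\|_{L^2}^2\big)\,d\tau$. When $\mu=1$ the weight $\tau^{2\mu-2}$ is trivial and this is controlled by $\int_0^t(\|n\|_{L^2}^2+\|u\|_{H^2}^2+\|\partial_x(n,u)\|_{L^2}^2)\,d\tau$, which is finite by the time-integral already present in the weighted estimate above together with \eqref{eulerpriorie}, giving the clean rate $(1+t)^{-1}$. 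When $1/2<\mu<1$ one instead inserts the pointwise decay $\|n(\tau)\|_{L^2}^2\lesssim(1+\tau)^{-2\mu+1}$ just obtained, so that $\int_0^t\tau^{2\mu-2}(1+\tau)^{-2\mu+1}\,d\tau\lesssim\log(1+t)$, which accounts for the logarithmic factor. The decay of $u$ by itself then follows by feeding $\|\partial_x n(t)\|_{L^2}$ into the Duhamel representation for $\partial_t u+\lambda u=-\frac{P'(\bar\rho)}{\bar\rho}\partial_x n+F_2$, exactly as in \eqref{vinq}.

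The step I expect to be the main obstacle is the control of the nonlinear source $\partial_x\!\left((\partial_t W)^2/\rho\right)$ in the weighted estimates: because it carries an $x$-derivative, the weighted testing produces terms in which $\partial_{tx}^2 W=-\partial_x(\rho u)$ is paired against the weighted factor $\partial_x W=n$, and — since only $|x|^{\mu}(\rho_0-\bar\rho)$ and $|x|^{\mu-1/2}u_0$ lie in $L^2$, with \emph{no} weighted control on derivatives of the data — such terms cannot be closed by weighted derivative norms; one must instead peel off a small factor $\|(\rho-\bar\rho,u)\|_{H^2}$ to absorb them and estimate the residue through the uniform $H^2$ bounds and the time-decay of Theorem~\ref{ThmEuler1}. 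The variable wave speed $P'(\rho)$, whose derivative $P''(\rho)\partial_{xx}^2 W$ appears upon integrating by parts, is handled by the same smallness mechanism.
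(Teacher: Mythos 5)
Your proposal is correct and, up to presentation, identical to the paper's proof: you introduce exactly the same wave unknown ($W$ with $\partial_x W=n$ and $\partial_t W=-\rho u=-m$, which the paper calls $M$), arrive at the same constant-coefficient damped wave operator with nonlinear source (your $\mathcal{N}=(P'(\rho)-P'(\bar\rho))\partial_x n+\partial_x(m^2/\rho)$ is exactly $-F_3$ of \eqref{Eulerr2}), run the same weighted hypocoercive estimate of Lemma~\ref{lemmaW} with the source bounded by peeling off the unweighted factor $\|\partial_x(n,m)\|_{L^\infty}^2$ (the paper then closes with Gr\"onwall, using $\int_0^\infty\|\partial_x(n,u)\|_{H^1}^2\,d\tau\lesssim\delta_0^2$ from \eqref{eulerpriorie}), and obtain \eqref{eulerdecay4} by multiplying \eqref{eulerlyapunov} by $t^{2\mu-1}$ and distinguishing $\mu=1$ from $1/2<\mu<1$.
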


\begin{proof}
In order to apply the method in Section \ref{sec:wavemethod}, a key step is to consider the momentum $m=\rho u=(n+\bar{\rho})u$ instead of the velocity $v$. Then, the system \eqref{Euler} is rewritten, in terms of $(n,m)$, as
\begin{equation}\label{Eulerr2}
\left\{
\begin{aligned}
&\partial_{t}n+\partial_x m=0,\\
&\partial_{t}m +P'(\bar{\rho})\partial_x n +\lambda m= F_{3}:=-\partial_x\Big(\frac{m^2}{\bar{\rho}+n}+P(\bar{\rho}+n)-P(\bar{\rho})-P'(\bar{\rho}) n\Big),\\
&(n,m)|_{t=0}=(\rho_{0}-\bar{\rho},\rho_{0}u_{0}).
\end{aligned}
\right.
\end{equation}
As in Section \ref{sec:wavemethod}, we introduce the wave unknown
\[
M(x,t):=\int_{-\infty}^{x} n(y,t)\,dy
\]
such that
\begin{align}
    &\partial_{t}^2 M-P'(\bar{\rho})\partial_{x}^2 M+\lambda \partial_t M=F_{3}.
\end{align}
Then, following the computation done in the proof of Lemma \ref{lemmaW}, we can show that
\begin{equation}\label{fffff}
    \begin{aligned}
        &\int_{\mathbb{R}} (1+t+|x|)^{2\mu-1}(|\partial_t M|^2+|\partial_x M|^2)\,dx\\
    &\quad\quad+\int_{0}^{t}\int_{\mathbb{R}}\Big( (1+t+|x|)^{2\mu-1} |\partial_t M |^2+(1+t+|x|)^{2\mu-2} |\partial_x M |^2\Big)\,dxd\tau \\
        &\quad\lesssim \widetilde{Y}_{0}^2 +\int_{0}^{t}\int_{\mathbb{R}}(1+t+|x|)^{2\mu-1}|F_{3}|^2\,dxd\tau.
    \end{aligned}
\end{equation}
From \eqref{eulerpriorie} and composition estimates, we obtain
\begin{equation}\nonumber
    \begin{aligned}
    &|F_{3}|\lesssim |m| |\partial_x m|+ |n||\partial_x n|.
    \end{aligned}
\end{equation}
It thus follows that
\begin{equation}
    \begin{aligned}
    &\int_{0}^{t}\int_{\mathbb{R}}(1+t+|x|)^{2\mu-1}|F_{3}|^2\,dxd\tau\\
    &\quad \lesssim \int_{0}^{t}\|\partial_x(m,n)(\tau)\|_{L^{\infty}}^2\int_{\mathbb{R}}(1+t+|x|)^{2\mu-1}(|m|^2+|n|^2)\,dxd\tau \\
    &\quad\lesssim \int_{0}^{t}\|\partial_x(n,u)(\tau)\|_{H^1}^2\int_{\mathbb{R}}(1+t+|x|)^{2\mu-1}(|\partial_t M|^2+|\partial_x M|^2)\,dxd\tau,
    \end{aligned}
\end{equation}
where one has used the facts that $\partial_x M=n$ and $\partial_t M=-m$. Inserting the above estimate into \eqref{fffff} and using \eqref{eulerpriorie} and Gr\"onwall's inequality, we get
\begin{equation}\label{fffff1}
    \begin{aligned}
        &\int_{\mathbb{R}} (1+t+|x|)^{2\mu-1}(|n|^2+|m|^2) \,dx+\int_{0}^{t}\int_{\mathbb{R}} (1+t+|x|)^{2\mu-2} |m |^2\,dxd\tau \lesssim \widetilde{Y}_{0}^2,
    \end{aligned}
\end{equation}
which implies $\eqref{eulerdecay3}_1$. Finally, in view of $\eqref{eulerdecay3}_1$,  \eqref{vinq} and Lemma \ref{decayineq} to the Lyapunov inequality  \eqref{eulerlyapunov}, we are able to show the faster decay of $v$ and $\partial_x(\rho-\bar{\rho},u)$ in $\eqref{eulerdecay3}_2$. The proof of Lemma \ref{lemmaeulerdecay2} is complete.
\end{proof}

\section{Proof of Theorem \ref{ThmEulerNLD}} \label{sec:EulerNonlinD}

\subsection{Global existence for the $p$-system with nonlinear damping}
In this section, we prove the global existence of the nonlinearly damped $p$-system \eqref{syst:EulerNLD}. For brevity, we omit the details concerning the local well-posedness of solutions to \eqref{syst:EulerNLD} subject to initial data in $H^1 $ since it can be proved by standard iteration arguments, see e.g. \cite{HJR,BHN,Serre,Majdalocal,XK1}. To extend the local solution to a global one, we establish the uniform a-priori estimates as follows.
\smallbreak
\noindent $\bullet$ $L^2$ estimates:
Standard energy estimates lead to
\begin{align}
&\dfrac{d}{dt}\| (\rho,u)(t)\|_{L^2}^2+2\|u(t)\|_{L^{r+1}}^{r+1}=0.\label{L2}
\end{align}

\noindent $\bullet$ $H^1$ estimates: From direct energy estimates in \eqref{syst:EulerNLD}, we get
\begin{align}
  \dfrac{d}{dt}\|(\partial_x \rho,\partial_x u)(t)\|_{L^2}^2 +r\int_{\mathbb{R}}|u|^{r-1}(\partial_x u)^2\,dx=0,\label{H1}
\end{align}
where we used that
\begin{equation}\nonumber
\begin{aligned}
\int_{\mathbb{R}} \partial_x(|u|^{r-1}u)\partial_x u \,dx&=\int_{\mathbb{R}} \partial_x|u|^{r-1} u\partial_x u\,dx+\int_{\mathbb{R}} |u|^{r-1}(\partial_x u)^2\,dx\\
&=r\int_{\mathbb{R}}|u|^{r-1}(\partial_x u)^2\,dx.
\end{aligned}
\end{equation}
\noindent $\bullet$ Dissipation for $\partial_x u$: Multiplying $\eqref{syst:EulerNLD}_{2}$ by $|u|^{r-1}\partial_x \rho$, we infer
\begin{align}
&\int_{\mathbb{R}} \partial_t u |u|^{r-1}\partial_x \rho\,dx+\int_{\mathbb{R}} |u|^{r-1}|\partial_x \rho|^2 \,dx-\int_{\mathbb{R}} |u|^{2r-2} u \partial_x \rho\,dx=0.\label{1}
\end{align}
Similarly, from $\eqref{syst:EulerNLD}_{1}$, we get
\begin{align}
&\int_{\mathbb{R}} \frac{1}{r}|u|^{r-1} u \partial_x \partial_t \rho \,dx+\int_{\mathbb{R}} \frac{1}{r}|u|^{r-1} u \partial_{x}^2 u \,dx=0.\label{2}
\end{align}
By \eqref{1} and \eqref{2}, the fact that $\partial_t u  |u|^{r-1} =\partial_t (|u|^{r-1} u)/r$ and integration by parts, we obtain
\begin{align}
&\frac{d}{dt}\int_{\mathbb{R}} \frac{1}{r} |u|^{r-1} u \partial_x \rho \, dx+\int_{\mathbb{R}} ( |u|^{r-1}|\partial_x \rho|^2- |u|^{2r-2} u \partial_x \rho- |u|^{r-1} |\partial_x u|^2) \, dx=0.\label{Hcross}
\end{align}
Defining 
\begin{align}
\mathcal{W}_{*}(t)&:=\|(\rho,u,\partial_x \rho, \partial_x u)(t)\|_{L^2}^2+\eta_{2}\int \frac{1}{r} |u|^{r-1} u \partial_x \rho \,dx,\nonumber\\
\mathcal{H}_{*}(t)&:=2\int_{\mathbb{R}} |u|^{r-1} (|u|^2+|\partial_x u|^2)\,dx\nonumber\\
&\quad+\eta_{2} \int_{\mathbb{R}} ( |u|^{r-1}|\partial_x \rho|^2- |u|^{2r-2} u \partial_x \rho- |u|^{r-1} |\partial_x u|^2) \,dx,\nonumber
\end{align}
we obtain from \eqref{L2}, \eqref{H1} and \eqref{Hcross} that
\begin{align}
&\frac{d}{dt}\mathcal{W}_{*}(t)+\mathcal{H}_{*}(t)=0.\label{Lnon}
\end{align}
Since due to \eqref{L2}, \eqref{H1} and the Gagliardo-Nirenberg inequality,
 \[
 \|u\|_{L^{\infty}_{t}(L^{\infty}_{x})}\lesssim \|u\|_{L^{\infty}_{t}(L^2_x)}^{\frac{1}{2}}\|\partial_x u\|_{L^{\infty}_{t}(L^{\infty}_{x})}^{\frac{1}{2}}\lesssim 1,
 \]
we are able to choose a suitably small constant $\eta_{2}>0$ such that
\begin{equation}
\left\{
\begin{aligned}
&\mathcal{W}_{*}(t)\sim \|(\rho,u,\partial_x \rho, \partial_x u)(t)\|_{L^2}^2,\\
&\mathcal{H}_{*}(t)\gtrsim \int_{\mathbb{R}} |u|^{r-1} (|u|^2+|\partial_x \rho|^2+|\partial_x u|^2)\,dx .\label{sim}
\end{aligned}
\right.
\end{equation}
Integrating \eqref{Lnon} over $[0,t]$ and making use of \eqref{sim}, we have
\begin{equation}\nonumber
\begin{aligned}
&\| (\rho,u)(t)\|_{H^1}^2+\int_{0}^{t}\|u(\tau)\|_{L^{r+1}}^{r+1}+\|(\partial_x \rho^{(r+1)/2},\partial_x u^{(r+1)/2})(\tau)\|_{L^2}^2d\tau\lesssim \| (\rho_{0},u_{0})\|_{H^1}^2.
\end{aligned}
\end{equation}
The above estimates enable us to prove the global existence of the solution to \eqref{syst:EulerNLD} with a standard bootstrap argument.

\subsection{Wave formulation}\label{subsectionwaven}

Differentiating $\eqref{syst:EulerNLD}_{1}$ and $\eqref{syst:EulerNLD}_{2}$ with respect to $t$, we rewrite $\eqref{syst:EulerNLD}$ into two damped wave-like equations
\begin{equation}\label{wave}
\partial_{t}^2 \rho-\partial_{x}^2 \rho+r|u|^{r-1}\partial_t \rho=0,\quad
\partial_{t}^2 u-\partial_{x}^2 u+r|u|^{r-1} \partial_t u=0.
\end{equation}
From the equation $\eqref{syst:EulerNLD}_{1}$ it follows that
\begin{align}
u=-\partial_t \int_{-\infty}^{x} \rho(y,t)\,dy,\label{v}
\end{align}
from which we infer that
\begin{equation}\nonumber
    \begin{aligned}
        r|u|^{r-1}\partial_t \rho&=r\bigg|\partial_t \int_{-\infty}^{x} \rho(y,t)\,dy\bigg|^{r-1}\, \partial_x  \partial_t \int_{-\infty}^{x} \rho(y,t)\,dy\\
        &= \partial_x \bigg(\bigg|\partial_t \int_{-\infty}^{x} \rho(y,t)\,dy\bigg|^{r-1} \partial_t \int_{-\infty}^{x} \rho(y,t)\,dy  \bigg).
    \end{aligned}
\end{equation}
Thus, defining the new unknown $w$ by
\begin{align}
w:=\int_{-\infty}^{x} u(y,t)\,dy\nonumber
\end{align}
and integrating the equation $\eqref{wave}_{1}$ over $(-\infty,x)$, we obtain the nonlinearly damped wave equation:
\begin{align}
&\partial_{t}^2w-\partial_{x}^2 w+|\partial_{t}w|^{r-1} \partial_t w=0.\label{dampedwave}
\end{align}
From $\eqref{syst:EulerNLD}_{1}$, we see that
\begin{equation}\nonumber
\begin{aligned}
&\|\partial_t w(t)\|_{L^2}^2=\|u(t)\|_{L^2}^2,\quad\quad \|\partial_x w(t)\|_{L^2}^2=\|\rho(t)\|_{L^2}^2.
\end{aligned}
\end{equation}
Thus, once we get the decay rate of $\|(\partial_t w,\partial_x w)(t)\|_{L^2}^2$, the $L^2$ decay of $ (\rho,u)$ follows.

\subsection{Asymptotic estimates}
In this subsection, we prove Theorem \ref{ThmEulerNLD} and derive logarithmic time-decay rates for the solutions to the system \eqref{syst:EulerNLD}. To that matter, we capture the nonlinear dissipative structures in \eqref{dampedwave} by adapting the method developed by Theorem \ref{ThmDecayGeneral2} and the work of  Mochizuki and Motai in \cite{MochizukiMotai}. A key ingredient is the $L^2$ coercive estimates for $\partial_t w$ that we derive from the damped term $|\partial_t w|^{r-1}\partial_t w$ with suitable weights (see \eqref{qqq3} below).
\smallbreak
Let two weight functions $\varphi_{1}(s), \varphi_{2}(s)$ for $s\geq0$ to be determined later. Taking the $L^2$ inner product of $\eqref{dampedwave}$ with $\varphi_{1} (t+|x|)\partial_{t}w$, we obtain
\begin{equation}\label{lb1}
\begin{aligned}
&\frac{d}{dt}\int_{\mathbb{R}} \frac{1}{2}\varphi_{1} (t+|x|) (|\partial_t w|^2+|\partial_x w|^2) \,dx\\
&\quad+\int_{\mathbb{R}} \Big(\varphi_{1} (t+|x|)|\partial_t w|^{r}-\frac{1}{2}\varphi_{1}' (t+|x|)|\partial_x w|^2\Big) dx=\int_{\mathbb{R}}\frac{1}{2}\varphi_{1}' (t+|x|) |\partial_t w|^2\,dx.
\end{aligned}
\end{equation}
In addition, multiplying $\eqref{dampedwave}$ by $\varphi_{1}' (t+|x|) w$ and integrating by parts, we obtain
\begin{equation}\label{lb2}
\begin{aligned}
&\frac{d}{dt}\int_{\mathbb{R}} \Big( \varphi_{1}' (t+|x|) w \partial_t w-\frac{1}{2}\varphi_{1}'' (t+|x|)|w|^2 \Big) \,dx\\
&\quad+\int_{\mathbb{R}}\Big(\varphi_{1}'(t+|x|)|\partial_x w|^2-\varphi_{1}'' (t+|x|)\delta_{0}(x)|w|^2\Big)\,dx\\
&=\int_{\mathbb{R}} \Big(\varphi_{1}'(t+|x|)|\partial_t w|^2- \varphi_{1}' (t+|x|)|\partial_t w|^{r-1}\partial_t w w\Big)\,dx,
\end{aligned}
\end{equation}
where $\delta_{0}(x)$ denotes the Dirac function at $0$ and we have used that
\begin{equation}\nonumber
\begin{aligned}
\int_{\mathbb{R}} \varphi_{1}' (t+|x|)  w \partial^2_t w \,dx&=\frac{d}{dt}\int_{\mathbb{R}} \Big(\varphi_{1}' (t+|x|)  w \partial_t w-\frac{1}{2} \varphi_{1}'' (t+|x|)|w|^2\Big)\, dx\\
&\quad+\int_{\mathbb{R}}\Big(\frac{1}{2} \varphi_{1}'''(t+|x|)|w|^2-\int_{\mathbb{R}}\varphi_{1}'(t+|x|)|\partial_t w|^2\Big)\,dx,
\end{aligned}
\end{equation}
and
\begin{equation}\nonumber
\begin{aligned}
&-\int_{\mathbb{R}} \varphi_{1}' (t+|x|)  w \partial_{x}^2 w\, dx\\
&\quad=\int_{\mathbb{R}}\bigg( \varphi_{1}'(t+|x|) |\partial_x w|^2 -\Big( \frac{1}{2} \varphi_{1}''' (t+|x|)+ \varphi_{1}'' (t+|x|)\delta_{0}(x)\Big)|w|^2 \bigg) \, dx.
\end{aligned}
\end{equation}
To control the second term on the right-hand side of \eqref{lb2}, one needs to capture the dissipation of $|w|^{r+1}$ with a suitable weight. To this matter, a direct calculation yields
\begin{equation}\label{lb3}
\begin{aligned}
&\frac{d}{dt}\int_{\mathbb{R}}\varphi_{2}(t+|x|)|w|^{r+1}\,dx-\int_{\mathbb{R}}\varphi'_{2}(t+|x|)|w|^{r+1} \,dx\\
&=-\int_{\mathbb{R}} (r+1)\varphi_{2}(t+|x|)|w|^{r-1}w \partial_t w \, dx.
\end{aligned}
\end{equation}
For some small constant $\eta_{3}>0$ to be chosen later, we define
\begin{equation}\nonumber
\begin{aligned}
\mathcal{W}_{\varphi}(t):&=\int_{\mathbb{R}}\frac{1}{2}\varphi_{1} (t+|x|) (|\partial_t w|^2+|\partial_x w|^2)\,dx\\
&\quad+\eta_{3}\int_{\mathbb{R}} \Big( \varphi_{1}' (t+|x|) w \partial_t w-\frac{1}{2}\varphi_{1}'' (t+|x|)|w|^2+\varphi_{2}(t+|x|)|w|^{r+1}\Big) \,dx,
\end{aligned}
\end{equation}
and
\begin{equation}\nonumber
\begin{aligned}
\mathcal{H}_{\varphi}(t):&=\int_{\mathbb{R}} \varphi_{1} (\tau+|x|)|\partial_t w|^{r} \,dx\\
&\quad+\eta_{3}\int_{\mathbb{R}} \Big(\varphi_{1}'(t+|x|)|\partial_x w|^2-\varphi_{1}'' (t+|x|)\delta_{0}(x)|w|^2-\varphi'_{2}(t+|x|)|w|^{r+1} \Big)\, dx.
\end{aligned}
\end{equation}
Thus, from \eqref{lb1}, \eqref{lb2} and \eqref{lb3}, we get
\begin{equation}
\begin{aligned}
&\frac{d}{dt}\mathcal{W}_{\varphi}(t)+\mathcal{H}_{\varphi}(t)\\
&=\int_{\mathbb{R}}\Big(\frac{1+\eta_{3}}{2}\varphi_{1}' (t+|x|)|\partial_t w|^2-\eta_{3}\varphi_{1}' (t+|x|)|\partial_t w|^{r-1}\partial_t w w\Big)\,dx\\
&\quad -\eta_{3}(r+1)\int_{\mathbb{R}}\varphi_{2}(\tau+|x|)|w|^{r-1}w \partial_t w \Big) \,dx.\label{LNL}
\end{aligned}
\end{equation}
In order to control $\mathcal{W}_{\varphi}(t), \mathcal{H}_{\varphi}(t)$ and derive the desired dissipation estimates, we require 
\begin{align}
&\varphi_{1}>0,\quad \varphi_{1}'>0,\quad\varphi_{1}''<0,\quad |\varphi_{1}'|^2\leq C\varphi_{1}|\varphi_{1}''|,\quad \varphi_{2}>0,\quad\varphi'_{2}<0.\label{varphicondition1}
\end{align}
Indeed, under the condition \eqref{varphicondition1}, one has
\begin{equation}\label{qqq1}
\begin{aligned}
&\int_{\mathbb{R}} \varphi_{1}' (t+|x|) |w \partial_t w| \,dx\\
&\quad\leq  C\int_{\mathbb{R}}\varphi_{1} (t+|x|)|\partial_t w|^2 \,dx-\frac{1}{4}\int_{\mathbb{R}} \varphi_{1}'' (t+|x|)|w|^2 \,dx,
\end{aligned}
\end{equation}
which implies
\begin{equation}\label{qqq10}
\begin{aligned}
\mathcal{W}_{\varphi}(t)&\geq \int_{\mathbb{R}}(\frac{1}{2}-C\eta_{3})\varphi_{1} (t+|x|) (|\partial_t w|^2+|\partial_x w|^2)\,dx\\
&\quad+\int_{\mathbb{R}}\Big(-\frac{\eta_{3}}{4}\varphi_{1}'' (t+|x|)|w|^2+\eta_{3}\varphi_{2}(t+|x|)|w|^{r+1}\Big) \,dx.
\end{aligned}
\end{equation}
Moreover, \eqref{varphicondition1} also leads to
\begin{equation}\label{qqq101}
\begin{aligned}
\mathcal{H}_{\varphi}(t)&\geq\int_{\mathbb{R}}  \varphi_{1} (t+|x|)|\partial_t w|^{r}\,dx\\
&\quad+\eta_{3} \int_{\mathbb{R}} \Big(\varphi_{1}'(t+|x|)|\partial_x w|^2-\eta_{3}\varphi'_{2}(t+|x|)|w|^{r+1} \Big)\, dx.
\end{aligned}
\end{equation}
We now focus on the estimation of the right-hand side terms \eqref{LNL}. First, a use of Young's inequality gives
\begin{align}
&\frac{3}{2}\int_{0}^{t}\int_{\mathbb{R}}\varphi_{1}' (\tau+|x|)|\partial_t w|^2\,dxd\tau \nonumber\\
&\quad\leq\frac{3}{2}\Big(\int_{0}^{t}\int_{\mathbb{R}} \varphi_{1}(\tau+|x|)|\partial_t w|^{r+1}\,dxd\tau\Big)^{\frac{2}{r+1}}\Big(\int_{0}^{t} \int_{\mathbb{R}} \frac{|\varphi_{1}'(\tau+|x|)|^{\frac{r+1}{r-1}}}{\varphi_{1}(\tau+|x|)^{\frac{2}{r-1}}} \,dxd\tau \Big)^{\frac{r-1}{r+1}}\nonumber\\
&\quad\leq \frac{1}{4}\int_{0}^{t}\int_{\mathbb{R}} \varphi_{1} (\tau+|x|)|\partial_t w|^{r+1}\,dxd\tau+C\int_{0}^{t}\int_{\mathbb{R}} \frac{|\varphi_{1}'(\tau+|x|)|^{\frac{r+1}{r-1}}}{\varphi_{1}(\tau+|x|)^{\frac{2}{r-1}}} \,dxd\tau.\label{qqq3}
\end{align}
Similarly,  we infer
\begin{equation}\label{qqq4}
\begin{aligned}
\int_{0}^{t}\int_{\mathbb{R}} \varphi_{1}' (\tau+|x|)|\partial_t w|^{r}|w|\, dxd\tau
&\leq \frac{1}{4}\int_{0}^{t}\int_{\mathbb{R}} \varphi_{1} (\tau+|x|)|\partial_t w|^{r+1}\,dxd\tau\\
&+C\int_{0}^{t} \int_{\mathbb{R}}\frac{|\varphi_{1}' (\tau+|x|)|^{r+1}}{\varphi_{1} (\tau+|x|)^{r}}  |w|^{r+1}\,dxd\tau,
\end{aligned}
\end{equation}
and
\begin{equation}\label{qqq5}
\begin{aligned}
\hspace{-0.2cm}(r+1)\int_{0}^{t}\int_{R} \varphi_{2}(\tau+|x|)|w|^{r}|\partial_t w|\, dxd\tau
&\leq \frac{1}{4}\int_{0}^{t}\int_{\mathbb{R}} \varphi_{1} (\tau+|x|)|\partial_t w|^{r+1}\,dxd\tau
\\&+C\int_{0}^{t} \int_{\mathbb{R}}\frac{|\varphi_{2}(\tau+|x|)|^{r+1}}{\varphi_{1} (\tau+|x|)^{r}}  |w|^{r+1}\,dxd\tau.
\end{aligned}
\end{equation}
Let $\eta_{3}=1/(4C)$. Then it follows from \eqref{qqq1}-\eqref{qqq5} that
\begin{align}
 & \int_{\mathbb{R}}\bigg(\frac{1}{4}\varphi_{1} (t+|x|) (|\partial_t w|^2+|\partial_x w|^2)-\frac{1}{4}\varphi_{1}'' (t+|x|)|w|^2+\varphi_{2}(t+|x|)|w|^{r+1}dx\bigg)\, dx\nonumber\\
 &\quad\quad+\int_{0}^{t}\int_{\mathbb{R}}\Big(\frac{1}{2}\varphi_{1} (\tau+|x|)|\partial_t w|^{r}+\frac{1}{2}\varphi_{1}' (\tau+|x|)|\partial_x w|^2+\mathcal{C}_{1}(\tau+|x|)|w|^{r+1}\Big)\,dxd\tau\nonumber\\
 &\leq \mathcal{W}_{\varphi}(0)+ C\int_{0}^{t}\int_{\mathbb{R}} \mathcal{C}_{2}(\tau+|x|)\,dxd\tau,\label{phimm}
\end{align}
with
\begin{equation}\nonumber
\begin{aligned}
&\mathcal{C}_{1}(s):=-\varphi'_{2}(s)-\frac{C(|\varphi_{1}' (s)|^{r+1}+|\varphi_{2}(s)|^{r+1})}{\varphi_{1} (s)^{r}},\quad\quad \mathcal{C}_{2}(s):=\frac{|\varphi_{1}'(s)|^{\frac{r+1}{r-1}}}{\varphi_{1}(s)^{\frac{2}{r-1}}} .
\end{aligned}
\end{equation}
Therefore, one needs to choose $\varphi_{1}$ and $\varphi_{2}$ such that
\begin{equation}\label{varphicondition2}
\begin{aligned}
\mathcal{C}_{1}(s)>0,\quad\quad \int_{0}^{\infty}\int_{\mathbb{R}} \mathcal{C}_{2}(\tau+|x|)\,dxd\tau<\infty.
\end{aligned}
\end{equation}
For all $q>0$ and a suitable large constant $a$, we choose the functions
\begin{align}
\varphi_{1}(s)=\log^{2q}{(a+s)},\quad\quad \varphi_{2}(s)=\frac{\log^{2q-r+1}{(a+s)}}{|a+s|^{r}},\nonumber
\end{align}
which fulfill the conditions \eqref{varphicondition1} and \eqref{varphicondition2}. Indeed, for suitable large $a>0$, it is easy to verify that
\begin{align}
&\varphi_{1}'(s)=\frac{2q \log^{2q-1}{(a+s)}}{a+s}>0,\nonumber\\
&\varphi_{1}''(s)=-\frac{ 2q\log^{2q-2}{(a+s)}(\log{(a+s)}-1+2q)}{|a+s|^2}<0,\nonumber \\
&|\varphi_{1}'|^2\leq \frac{1}{4}\varphi_{1}|\varphi_{1}''|,\nonumber\\
&\varphi’_{2}(s)=-\frac{\log{(a+s)}^{2q-r}(r\log{(a+s)}-2q+r-1)}{|a+s|^{r+1}}<0,\nonumber
\end{align}
and
\begin{equation}\nonumber
\begin{aligned}
\mathcal{C}_{1}(s)&\geq \frac{\log{(a+s)}^{2q-r}}{|a+s|^{r+1}}\bigg(r\log{(a)}-2q+r-1\\
&\quad\quad\quad-C\Big((2q)^{r+1}\log^{-1}{(a)}-C\log^{r-r^2+1}{(a)} \Big)a^{-r^2+1} \bigg)>0.
\end{aligned}
\end{equation}
 The condition $1<r<3$ comes into play for the second term. It implies $(r+1)/(r-1)>2$, and therefore
\begin{equation}\label{conditionr3}
\begin{aligned}
\int_{0}^{\infty}\int_{\mathbb{R}} \mathcal{C}_{2}(\tau+|x|)\,dxd\tau&=\int_{0}^{\infty}\int_{\mathbb{R}} \frac{\log^{2q-\frac{r+1}{r-1}}{(a+\tau+|x|)}}{|a+\tau+|x||^{\frac{r+1}{r-1}}}\,dxd\tau<\infty.
\end{aligned}
\end{equation}
By \eqref{varphicondition1}, \eqref{varphicondition2} and the facts that $\partial_x w=\rho$, $\partial_tw=-u$, substituting $\varphi_{1}(s)=\log{(a+s)}^{2q}$ into \eqref{phimm}, we obtain
\begin{equation}\nonumber
\begin{aligned}
\log^{2q}{(1+t)}\int_{\mathbb{R}}(|\rho|^2+|u|^2)\, dx&\leq \int_{\mathbb{R}}\varphi_{1}(t+|x|)(|\partial_t w|^2+|\partial_x w|^2)\, dx\\
&\leq C\mathcal{W}_{\varphi}(0)+C.
\end{aligned}
\end{equation}
 To bound the initial energy $\mathcal{W}_{\varphi}(0)$, we use \eqref{NDw} and find that
\begin{equation}\nonumber
\begin{aligned}
\mathcal{W}_{\varphi}(0)&\leq C\int_{\mathbb{R}}\log^{2q}{(1+ |x|)} (|\rho_{0}|^2+|u_{0}|^2)\, dx\\
&\quad+C\int_{\mathbb{R}}\frac{\log^{2q-2}{(1+ |x|)}}{(1+|x|)^2}|\int^{x}_{-\infty}\rho_{0}(y)\,dy|^2\, dx \\
&\quad+C\int_{\mathbb{R}}\frac{\log^{2q-r-1}(1+|x|)}{(1+ |x|)^{r}}|\int^{x}_{-\infty}\rho_{0}(y)\,dy|^{r+1}\, dx\\
&\leq C+C\|\rho_{0}\|_{L^1}^2\int_{\mathbb{R}}\frac{1}{(1+|x|)^{2-\eta}}\,dx +C\|\rho_{0}\|_{L^1}^{r+1}\int_{\mathbb{R}}\frac{1}{(1+ |x|)^{r-\eta}}\,  dx <\infty,
\end{aligned}
\end{equation}
for some sufficiently small $\eta\in (0,\min\{1,r-1\})$. Gathering the last two estimates, we obtain \eqref{logdecay} which concludes the proof of Theorem \ref{ThmEulerNLD}.
\qed

\section{Extensions and open problems} \label{sec:ext}
We have analyzed the time-asymptotic behaviour of general hyperbolic systems without Fourier analysis on the real line. Our work opens up several possible extensions and open problems. We list some of them below.

\begin{enumerate}
    \item[1.] \emph{The compressible Euler system with nonlinear damping.}
As an extension of Section \ref{sec:EulerNonlinD}, one can consider the Euler system \eqref{Euler} with a nonlinear damping $\rho |u|^{r-1}u$ with $r>1$, a relevant model for gas transport, see e.g.\cite{EggerGiesselmann}.
Following the approach used in Section \ref{sec:EulerLin}, similar decay rates to the one obtained for the nonlinearly damped $p$-system \eqref{syst:EulerNLD} in Theorem \ref{ThmEulerNLD} can be derived if one can construct a solution of the nonlinearly damped Euler system belonging to $L^2(\mathbb{R}_{+};\dot{H}^1 )$. However, in contrast with linear damping, the nonlinear damping $\rho |u|^{r-1}u$ does not seem sufficient, to the best of our knowledge, to control the advection terms and, thus, to ensure the existence of a unique global-in-time solution.

  \item[2.] \emph{Numerics.}
As an application of the method developed here, inspired by  \cite{PorrettaZuazua2016}, one can prove that a centered finite-difference approximation of the partially dissipative system \eqref{GE} in the whole space preserves the asymptotic properties of the continuous solutions as $t\to\infty$. Such result would highlight that the hyperbolic hypocoercive nature of the system can be preserved at the semi-discrete level.

% Here we employ hypocoercivity properties of rank $n$, which require the whole Kalman matrix $\mathcal{K}=(B,BA,BA^2,\cdots,BA^{n-1})$ to recover the effective dissipation on all the components. In the case of the Kolmogorov equation, rank-2 hypocoercivity is involved (\cite{PorrettaZuazua2016}).

\item[3.] \emph{Multi-dimensional setting.}
In the multi-dimensional setting, one can investigate the $n$-component systems in $\mathbb{R}^d$ ($d\geq1$) of the type:
\begin{equation}
\frac{\partial V}{\partial t} + \sum_{j=1}^dA^j\frac{\partial V}{\partial x_j}=BV, \label{GEQSYM}
\end{equation}
where $A^j$ ($j=1,..,d)$ are symmetric matrices, $B$ is  symmetric satisfying \eqref{BD} and the unknown $V=V(x,t)\in\mathbb{R}^n$ depends on the time and space variables
$(x,t)\in \mathbb{R}^d\times \mathbb{R}_+$.
The hyperbolic hypocoercivity approach in \cite{BZ}, presented in the one-dimensional setting here, can be extended to the multi-dimensional case, and similar time-decay rates can be recovered under the Kalman rank condition. However,  our approach does not allow to consider multi-dimensional systems of the general form  \eqref{GEQSYM}. The issue comes from the appearance of mixed derivatives (due the multi-dimensional setting) when differentiating in time the low-order corrector term in the Lyapunov functional, and it is unclear how to handle them without Fourier analysis.

Nevertheless, it is possible to obtain results under additional structural conditions on \eqref{GEQSYM}: for instance, when \eqref{GEQSYM} has a structure similar to the multi-dimensional compressible Euler system with damping. Indeed, for the multi-dimensional version of \eqref{Euler}, straightforward computations show that the Lyapunov functional
\[\mathcal{L}(t)= \|(\rho-\bar{\rho},u)(t)\|_{H^1}^2+ \eta t(\nabla\rho,\nabla u)(t)\|_{L^2}^2+\int_{\mathbb{R}^d} u\cdot\nabla \rho\,dx \]
allows to recover time-decay rates in any dimension. 
\end{enumerate}

\section{Appendix}

\subsection{Fourier analysis of partially dissipative hyperbolic systems}\smallbreak 
\label{sec:appedixSK}
\begin{lemma}[\cite{BZ}] \label{appendix1}
Let $U_0\in L^1\cap L^2$, $A$ be a symmetric matrix and $B$ a matrix satisfying \eqref{BD} and \eqref{Strong Dissipativity}. Then the following assertions are equivalent{\textrm:}
\begin{itemize}
\item The pair $(A,B)$ satisfies the Kalman rank condition \eqref{K}.
    \item The solution $U$ of \eqref{SystGen} satisfies
    \begin{equation}\label{decaybf}
    \left\{
    \begin{aligned}
 &\| U^\ell\|(t) \Vert_{L^{\infty}} \leq C t^{-\frac{1}{2}}\|U_0\|_{^1},\\
 & \|U^h(t) \|_{L^2} \leq C e^{-\gamma_{*} t}  \| U_0\|_{L^2},\\
&\|U(t)\|_{L^2} \leq C t^{-\frac{1}{4}} \|U_0 \|_{L^2},
\end{aligned}
\right.
\end{equation}
with $U^\ell(\xi,t):=\widehat{U}(\xi,t)\mathds{1}_{\{|\xi|<1\}}$ and $U^h(\xi,t):=\widehat{U}(\xi,t)\mathds{1}_{\{|\xi|>1\}}$,  where $\xi\in\mathbb{R}$ is the frequency parameter, and $C,\gamma_{*}$ are positive constants depending only on $A$ and $B$.
\end{itemize}
In addition, if $U_0\in H^1$ and $(A,B)$ satisfies the Kalman rank condition,  we have
\begin{align}\label{decayH1}
\|U(t)\|_{\dot{H}^1} \leq C t^{-\frac{3}{4}} \Vert U_0 \Vert_{H^1}.
\end{align}
\end{lemma}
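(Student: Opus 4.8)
The plan is to run the Fourier-side argument of \cite{BZ}. Applying the Fourier transform in $x$ to \eqref{Systlin} produces the family of ODEs \eqref{ODE}, and Proposition \ref{PropBZ} together with the Lyapunov functional \eqref{intro:lya} gives, under the Kalman rank condition, the pointwise-in-frequency bound \eqref{DecayBZ}: there is $c>0$ such that
\[
|\widehat U(\xi,t)|\le C\,|\widehat{U_0}(\xi)|\,e^{-\frac{c}{2}\min\{1,|\xi|^2\}t},\qquad \xi\in\mathbb{R},\ t>0 .
\]
Every estimate in \eqref{decaybf}--\eqref{decayH1} is then obtained by splitting the frequency space into $\{|\xi|\ge1\}$ and $\{|\xi|<1\}$ and integrating this bound.

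On the high-frequency region the weight is $e^{-ct/2}$, so by Plancherel $\|U^h(t)\|_{L^2}\le Ce^{-ct/2}\|U_0\|_{L^2}$ and $\int_{|\xi|\ge1}|\xi|^2|\widehat U(\xi,t)|^2\,d\xi\le Ce^{-ct}\|U_0\|_{\dot{H}^1}^2$; this yields the second bound in \eqref{decaybf} and the high-frequency part of \eqref{decayH1}. On the low-frequency region I would use the elementary estimate $\|\widehat{U_0}\|_{L^\infty}\le\|U_0\|_{L^1}$ together with the Gaussian integrals $\int_{\mathbb{R}}e^{-c|\xi|^2t}\,d\xi\lesssim t^{-1/2}$ and $\int_{\mathbb{R}}|\xi|^2e^{-c|\xi|^2t}\,d\xi\lesssim t^{-3/2}$. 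Thus $\|U^\ell(t)\|_{L^\infty}\le\frac{1}{2\pi}\int_{|\xi|<1}|\widehat U(\xi,t)|\,d\xi\lesssim t^{-1/2}\|U_0\|_{L^1}$, while $\|U^\ell(t)\|_{L^2}^2\le\|U_0\|_{L^1}^2\int_{\mathbb{R}}e^{-c|\xi|^2t}\,d\xi\lesssim t^{-1/2}\|U_0\|_{L^1}^2$ and $\int_{|\xi|<1}|\xi|^2|\widehat U(\xi,t)|^2\,d\xi\lesssim t^{-3/2}\|U_0\|_{L^1}^2$. Adding the two frequency ranges (absorbing the $L^1$ and $L^2$ norms into a single constant, using $U_0\in L^1\cap L^2$, resp.\ $U_0\in L^1\cap H^1$) produces the remaining bounds; for $t\le1$ one invokes the trivial energy inequality $\|U(t)\|_{L^2}\le\|U_0\|_{L^2}$ coming from \eqref{ZZ}, so that the stated rates are to be read as $(1+t)$-rates.

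For the converse I would argue by contraposition. If $(A,B)$ fails the Kalman rank condition, then by Proposition \ref{PropBZ} (equivalently the Hautus/PBH test, see \cite{Kalman}) the semi-norm \eqref{intro:normeq} is not a norm, which forces the existence of $0\ne v\in\mathbb{R}^n$ and $\mu\in\mathbb{R}$ with $Av=\mu v$ and $Bv=0$ (using that $A$ is symmetric, an $A$-invariant subspace contained in $\ker B$ carries a real eigenvector of $A$). Then the solution of \eqref{ODE} at any fixed frequency $\xi$ issued from $v$ equals $e^{-i\xi\mu t}v$ and does not decay. Taking initial data with $\widehat{U_0}(\xi)=\chi(\xi-\xi_0)v$ for some $\xi_0\ne0$ and a smooth bump $\chi$ supported in a small ball, one gets $U_0\in\mathcal{S}(\mathbb{R})\subset L^1\cap L^2$, and a continuity argument for \eqref{ODE} on the compact support of $\widehat{U_0}$ gives $\|U(t)\|_{L^2}=\|\widehat U(\cdot,t)\|_{L^2}\ge c_0>0$ for all $t>0$, contradicting the last bound in \eqref{decaybf}. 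Alternatively, $L^2$-decay for every datum forces strong stability of each semigroup $e^{-t(i\xi A+B)}$ for a.e.\ $\xi$, hence, by continuity of the spectrum in $\xi$, for every $\xi\ne0$, which is exactly the Kalman condition.

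The only genuinely delicate ingredient is the sharp bound \eqref{DecayBZ}, and in particular the fact that the decay exponent behaves like $|\xi|^2$ as $\xi\to0$ (heat-like, rather than $|\xi|$ or $|\xi|^4$): this is where the hypocoercive weight $\min(1/|\xi|,|\xi|)$ and the hierarchical smallness of the constants $\varepsilon_k$ in \eqref{intro:lya} enter. Since Proposition \ref{PropBZ} and \eqref{DecayBZ} are already available, the part of the present proof needing the most care is the converse direction --- exhibiting an admissible $L^1\cap L^2$ datum whose evolution stays uniformly bounded below in $L^2$ --- together with the bookkeeping reconciling the stated $t^{-\alpha}$ rates with the correct behaviour near $t=0$.
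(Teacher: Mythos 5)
Your proposal follows essentially the same Fourier-side route as the paper's sketch: the Lyapunov functional $\cL_\xi$ from \eqref{intro:lya}, the resulting pointwise bound \eqref{DecayBZ}, and a low/high frequency split with Gaussian integration to deduce each decay rate in \eqref{decaybf}--\eqref{decayH1}. Your only genuine addition is a correct argument for the converse implication (Kalman failure forces a joint eigenvector $Av=\mu v$, $Bv=0$ giving an undecaying $L^1\cap L^2$ datum), which the paper's sketch leaves implicit by deferring to \cite{BZ}; you also correctly read the stated $t^{-1/4}$ $L^2$ rate as a bound in terms of $\|U_0\|_{L^1\cap L^2}$ (it cannot hold with $\|U_0\|_{L^2}$ alone) and note the $(1+t)$ reading near $t=0$.
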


\textit{Sketch of the Proof of Lemma \ref{appendix1}.} According to Proposition \ref{PropBZ}, one introduces the following Lyapunov functional in the Fourier space
 \begin{equation}\nonumber
  \mathcal{L}_{\xi}(t)\triangleq |\widehat{U}|^2+\min\Big\{ \frac{1}{|\xi|},|\xi| \Big\} {\textrm{Re}} \sum_{k=1}^{n-1} \epsilon_k \langle BA^{k-1}\widehat{U}\cdot BA^k\widehat{U}\rangle ,
  \end{equation}
where $\langle\,\cdot\,\rangle$ designates the Hermitian scalar  product in $\mathbb{C}^{n}$.
For $\epsilon_{0}=\kappa_{0}/2$ and suitably small coefficients $\epsilon_k$ ($k=1,2,...,n-1)$, one deduces from \eqref{ODE} that
\[
\dfrac{d}{dt}\mathcal{L}_{\xi}(t)+\min\{1,|\xi|^2\}\sum_{k=0}^{n-1} \epsilon_k |B A^k\widehat{U}|^2\leq 0
\quad \text{and} \quad\mathcal{L}_{\xi}(t)\sim |\widehat{U}|^2.\]
Then, the Kalman rank condition \eqref{K} for $(A,B)$ implies that 
\[
|\widehat{U}(\xi,t)|^2\lesssim |\widehat{U_{0}}(\xi)|^2e^{-N_{*}\min\{1,|\xi|^2\}t},
\]
with
\[
N_{*}:=\inf\Big\{ \sum_{k=0}^{n-1}\epsilon_k |B A^k y|^2,~y\in\mathbb{S}^{n-1}\Big\}>0,
\]
and a low-high frequency splitting argument allows to conclude the classical time-decay rates \eqref{decaybf} and \eqref{decayH1}. 
\begin{remark}
In  Lemma \ref{appendix1}, the solutions can achieve the decay rates $(1+t)^{-\frac{1}{2}(\frac{1}{q}-\frac{1}{2})}$ in $L^2$ under more general $L^{q}$ assumptions with $q\in[1,2)$. The $L^1$ assumption on the initial data can be replaced by a $\dot{B}^{-\frac{1}{2}}_{2,\infty}$ assumption, cf. \cite{CBD2,XK2,XK1D}. Moreover, these decay rates are optimal in the sense that they follow the ones of the heat equation, which is expected by the low frequencies (the slowly-decaying part) of the solution.
\end{remark}

\subsection{Technical lemmas}

\begin{lemma}$($Caffarelli-Kohn-Nirenberg inequality$)$ {\textrm(}\cite{CKN,CW}{\textrm)}.  For all $h\in\mathcal{C}_{c}(\mathbb{R})$, it holds that
    \begin{align}
    &\| |x|^{\mu_{1}} h\|_{L^{p}}\leq C_{\mu_{1},\mu_{2}} \| |x|^{\mu_{2}}\partial_x h\|_{L^2},\label{CKNbest}
    \end{align}
    with $\mu_{2}>\frac{1}{2}$, $\mu_{2}-1\leq \mu_{1}\leq \mu_{2}-\frac{1}{2}$ and $p=\frac{2}{2(\mu_{2}-\mu_{1})-1}$. If $\mu_{1}=\mu_{2}-1$, then $p=2$ and the best constant in \eqref{CKNbest} is 
    \[
    C_{\mu_{1},\mu_{2}}=C_{\mu_{2}-1,\mu_{2}}=\frac{2}{2\mu_{2}-1}.
    \]

\end{lemma}

\begin{lemma}\label{decayineq}
Let $T>0$ be given time, and $E_{1}(t), E_{2}(t)$ be two nonnegative and absolutely continuous functions on $[0,T)$. Suppose that
\begin{align}
E_1(t)\leq a_1 t^{-\alpha},\label{decayE1}
\end{align}
and
\begin{align}
\frac{d}{dt}\Big(E_{1}(t)+ \eta_{0} t E_{2}(t) \Big)+a_{2} E_{2}(t)\leq 0,\quad\quad t\in (0,T),\label{decayineq1}
\end{align}
where $\eta_{0}$, $a_{1}, a_{2}$ and $\alpha$ are constants satisfying 
\[
a_{1}, a_{2}, \alpha>0,\quad\quad  0< \eta_{0}<\min\{\frac{a_{2}}{\alpha}, a_{2}\}.
\]
Then it holds that
\begin{align}
& E_{2}(t)\leq Ca_{1} t^{-\alpha-1},\quad\quad t\in (0,T),\label{E2decay}
\end{align}
where $C>0$ is a constant independent of $T$, $a_{1}$ and $a_{2}$.
\end{lemma}

\begin{proof}
 Let the constant $p$ satisfy $\max\{1,\alpha\}<p<a_{2}/\eta_{0}$. Multiplying \eqref{decayineq1} with $t^{p}$, we get
\begin{align}
\frac{d}{dt}\Big( t^{p}E_{1}(t)+ \eta_{0} t^{p+1} E_{2}(t) \Big)+(a_{2}-p \eta_{0}) t^{p} E_{2}(t)\leq  p t^{p-1} E_{1}(t).\label{decayineq2}
\end{align}
Noticing $a_{2}-p \eta_{0}>0$ and \eqref{decayE1}, we prove, after integrating \eqref{decayineq2} over $[0,t]$, that 
\[
t^{p}E_{1}(t)+ \eta_{0} t^{p+1} E_{2}(t)\leq  p a_1 \int_{0}^{t} \tau^{p-\alpha-1}\, d\tau= \frac{pa_1}{p-\alpha} t^{p-\alpha}.
\]
Hence, \eqref{E2decay} follows.
\end{proof}

\subsection{Optimal decay rates for the heat equation}\label{sec:heat}
In this section, we recover time-decay rates for the heat equation that are consistent with the result obtained in Theorem \ref{ThmDecayGeneral1}.
We consider the one-dimensional heat equation
\begin{equation}\label{heat}
\partial_{t}u-\partial_{x}^2u=0,\quad  (x,t)\in \mathbb{R}\times \mathbb{R}_{+}.
\end{equation}
The asymptotics for the heat equation or incompressible flows in space-weighted spaces has been intensively analyzed, cf. \cite{baejin,kukavica,yoshikomiyakawa} and references therein. Here, we prove a different estimate \eqref{heatdecay2} pertaining to the decay of \eqref{heat} without $L^1$ assumptions. The rates are optimal since our proof relies on the sharp $L^{p}$-$L^{q}$ estimates for the heat flow.

\begin{lemma}
\label{lemmaheat0}
Let $u$ be the solution to the heat equation above with initial datum $u_{0}\in L^2$. Then, for all   $k\geq0$ and $t>0$,
 \begin{align}\label{eq:decayheat1}
\|\partial^{k}_x u(t)\|_{L^2}\leq C_k t^{-\frac{k}{2}}\|u_{0}\|_{L^2}, 
\end{align}
where $C_k>0$ is a constant dependent only on $k$.
\begin{itemize}
\item If we assume that $u_0\in L^p$ with $p\in[1,2)$, then
\begin{align}\label{intro:decayheat}
\|\partial_x^k u(t)\|_{L^2}\leq C_{k,p}t^{-\frac{1}{2}(\frac{1}{p}-\frac{1}{2})-\frac{k}{2}}\|u_{0}\|_{L^{p}},
\end{align}
where $C_{k,p}>0$ is a constant dependent only on $k$ and $p$.
\medbreak
\item  If we further assume that $|x|^{\mu}u_{0}\in L^2 $ with some fixed $\mu>0$, then
\begin{align}
&\|\partial_x^{k}u(t)\|_{L^2}\leq C_{k,\mu} t^{-\frac{\mu}{2}-\frac{k}{2}}\| |x|^{\mu}u_{0}\|_{L^2},\label{heatdecay2}
\end{align}
where $C_{k,\mu}>0$ is a constant dependent only on $k$ and $\mu$.
\end{itemize}
\end{lemma}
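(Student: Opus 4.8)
\textbf{Proof proposal for Lemma \ref{lemmaheat0}.}
The plan is to reduce everything to the explicit heat semigroup $u(t)=e^{t\partial_{xx}^2}u_0=G_t\ast u_0$, where $G_t(x)=(4\pi t)^{-1/2}e^{-|x|^2/(4t)}$, and to exploit the well-known smoothing and $L^p$-$L^q$ estimates for the Gaussian kernel together with a duality/scaling argument for the weighted bound. First I would record that for any $k\ge 0$ one has $\partial_x^k u(t)=(\partial_x^k G_{t/2})\ast u(t/2)$, and since $\|\partial_x^k G_{t/2}\|_{L^1}\lesssim_k t^{-k/2}$, Young's inequality gives $\|\partial_x^k u(t)\|_{L^2}\lesssim_k t^{-k/2}\|u(t/2)\|_{L^2}\le t^{-k/2}\|u_0\|_{L^2}$, using that $\|u(s)\|_{L^2}$ is nonincreasing; this proves \eqref{eq:decayheat1}. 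For \eqref{intro:decayheat} I would combine this with the standard $L^p$-$L^2$ decay $\|u(t)\|_{L^2}=\|G_t\ast u_0\|_{L^2}\le \|G_t\|_{L^r}\|u_0\|_{L^p}\lesssim_p t^{-\frac12(\frac1p-\frac12)}\|u_0\|_{L^p}$ (with $1/r=1-(1/p-1/2)$ by Young), applied at time $t/2$, and then hit it with $\partial_x^k G_{t/2}$ as above; multiplying the two decay factors yields the exponent $-\frac12(\frac1p-\frac12)-\frac k2$.

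The only genuinely new point is the weighted estimate \eqref{heatdecay2}, and this is where I would spend the effort. The cleanest route is duality: by \eqref{eq:decayheat1} it suffices to prove $\|u(t)\|_{L^2}\lesssim_\mu t^{-\mu/2}\||x|^\mu u_0\|_{L^2}$ for the case $k=0$, and then apply the smoothing step one more time at time $t/2$. To get the $k=0$ bound, write, for $\phi\in L^2$,
\begin{equation}\nonumber
\langle u(t),\phi\rangle=\langle u_0, G_t\ast\phi\rangle=\int |x|^\mu u_0(x)\cdot |x|^{-\mu}(G_t\ast\phi)(x)\,dx,
\end{equation}
so that by Cauchy--Schwarz $\|u(t)\|_{L^2}\le \||x|^\mu u_0\|_{L^2}\,\sup_{\|\phi\|_{L^2}=1}\||x|^{-\mu}(G_t\ast\phi)\|_{L^2}$, and the task becomes the kernel bound $\||x|^{-\mu}G_t\ast\phi\|_{L^2}\lesssim_\mu t^{-\mu/2}\|\phi\|_{L^2}$. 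By the parabolic scaling $G_t\ast\phi(x)=(G_1\ast\phi_{\sqrt t})(x/\sqrt t)$ with $\phi_{\sqrt t}(y)=\phi(\sqrt t\, y)$, a change of variables reduces this to the $t$-independent inequality $\||x|^{-\mu}G_1\ast\psi\|_{L^2}\lesssim_\mu \|\psi\|_{L^2}$, i.e. the $L^2\to L^2$ boundedness of the operator $\psi\mapsto |x|^{-\mu}G_1\ast\psi$. For $0<\mu<1/2$ this follows from the Hardy--Rellich/Stein--Weiss inequality $\||x|^{-\mu}f\|_{L^2}\lesssim_\mu \|f\|_{\dot H^\mu}$ combined with the fact that $G_1\ast:\;L^2\to \dot H^\mu$ is bounded (the multiplier $|\xi|^\mu e^{-|\xi|^2}$ is bounded), while for general $\mu>0$ one writes $|x|^{-\mu}$ as a product of such factors on dyadic annuli or iterates the argument, using that $|x|^{-\mu}G_1(x)$ is still Schwartz away from the origin and only mildly singular there (locally integrable once $\mu<1$; for $\mu\ge 1$ split $G_1\ast\psi$ into its value near $0$, controlled by smoothing, and its tail).

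The main obstacle I anticipate is precisely this weighted kernel estimate for large $\mu$: the naive bound $|x|^{-\mu}$ is not locally integrable when $\mu\ge1$, so one cannot simply apply Young's inequality with $|x|^{-\mu}G_1$ as a convolution kernel. The fix is to not treat $|x|^{-\mu}$ as a weight against a fixed kernel but to use the factorization through fractional Sobolev spaces together with the regularizing action of $G_1$: because $G_1\ast\psi\in H^N$ for all $N$ with norms controlled by $\|\psi\|_{L^2}$, the product $|x|^{-\mu}(G_1\ast\psi)$ is controlled near the origin by $\|\,|x|^{-\mu}\|_{L^{q}_{loc}}\|G_1\ast\psi\|_{L^{q'}}$ for suitable $q<1/\mu$ when $\mu<1$, and for $\mu\ge1$ one iterates: writing $\mu=m+\mu'$ with $0<\mu'<1$, peel off $m$ factors of $|x|^{-1}$ using a weighted Hardy inequality on the already-smoothed function. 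Everything is scale-invariant so the $t$-power comes out automatically as $t^{-\mu/2}$, and then the final $\partial_x^k$ is inserted via $\partial_x^k u(t)=(\partial_x^k G_{t/2})\ast u(t/2)$ exactly as before, producing the stated exponent $-\mu/2-k/2$ and finishing the proof.
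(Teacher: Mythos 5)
The duality reduction is correct and, via the parabolic scaling you describe, does reduce the $k=0$ case of \eqref{heatdecay2} to the $t$-independent operator bound $\||x|^{-\mu}(G_1\ast\psi)\|_{L^2}\lesssim_\mu\|\psi\|_{L^2}$; for $0<\mu<1/2$ your Stein--Weiss/Hardy argument establishes this. The genuine gap is that this reduced bound is not merely ``harder'' at large $\mu$ but \emph{false} as soon as $\mu\ge 1/2$, and no factorization, dyadic decomposition or iteration can restore it. Take any $\psi\in L^2$ with $(G_1\ast\psi)(0)\ne0$, e.g.\ $\psi=G_1$: since $G_1\ast\psi$ is continuous and nonzero at the origin, $|x|^{-\mu}(G_1\ast\psi)(x)\sim c|x|^{-\mu}$ near $0$, while $\int_{|x|<1}|x|^{-2\mu}\,dx=\infty$ once $\mu\ge1/2$. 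The threshold you invoke is also the wrong one: what must be locally square-integrable is $|x|^{-\mu}$ (so $\mu<1/2$), not $|x|^{-\mu}G_1$ viewed as a convolution kernel --- the operator is multiplication by $|x|^{-\mu}$ \emph{after} the smoothing by $G_1$, not convolution by $|x|^{-\mu}G_1$, so Young's inequality is not the relevant diagnostic. Your H\"older step near the origin already forces $|x|^{-\mu}\in L^{2q}(B_1)$ with $q\ge1$, i.e.\ $\mu<1/2$, and the one-dimensional Stein--Weiss inequality $\||x|^{-\mu}f\|_{L^2}\lesssim\|f\|_{\dot H^\mu}$ is itself valid only for $\mu<1/2$.

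In fact your duality computation is tight, and it detects a real obstruction: \eqref{heatdecay2} cannot hold for all $u_0\in L^2$ with $|x|^\mu u_0\in L^2$ once $\mu>1/2$ unless some cancellation on $u_0$ is assumed. A fixed nonnegative $L^2$ bump $u_0$ supported near $x=1$ with $\int u_0=1$ has $\||x|^\mu u_0\|_{L^2}\sim1$, yet $\|u(t)\|_{L^2}\sim t^{-1/4}$ as $t\to\infty$, which is incompatible with a $t^{-\mu/2}$ bound for $\mu>1/2$. The paper's own argument avoids duality entirely: it passes to iterated primitives $u_i(x,t)=\int_{-\infty}^{x}\!\cdots\!\int_{-\infty}^{x_1}u\,dy\,dx_1\cdots dx_{i-1}$, bounds $\|u_{i,0}\|_{L^{p_i}}$ by $\||x|^\mu u_0\|_{L^2}$ through the Caffarelli--Kohn--Nirenberg inequality \eqref{CKNbest}, and then applies $L^{p_i}\to L^2$ heat smoothing to $\partial_x^{k+i}u_i$ (with Riesz--Thorin interpolation for the intermediate range $0<\mu<1$). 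But for the primitives $u_{i,0}$ to lie in $L^{p_i}$ with $p_i<\infty$ they must decay at $+\infty$, i.e.\ $u_0$ must have vanishing moments up to order $i-1$ --- the same structural hypothesis your duality obstruction is pointing at. So the missing idea is not a sharper kernel estimate, but an extra cancellation condition on $u_0$, and without it neither your route nor the paper's closes for $\mu\ge1/2$.
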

In Lemma \ref{lemmaheat0}, the inequalities \eqref{intro:decayheat} and \eqref{eq:decayheat1} are standard, cf \cite{bookgiga}. Below, we give the proof of \eqref{heatdecay2}.

\begin{proof}

The case $\mu=0$ corresponds to the classical decay rate of the heat equation. We divide the proof of the case $\mu>0$ in three cases.

\smallbreak
\noindent $\bullet$  {\textbf{Case 1:} $1\leq \mu \leq 3/2$.} We define the unknown $u_{1}(x,t):=\int^{x}_{-\infty} u(y,t)\,dy$ that satisfies 
\[
\partial_x u_{1}=u,\quad \quad u_{1}(x,0)=u_{1,0}(x):=\int^{x}_{-\infty} u_{0}(y,t)\,dy,
\]
and
\begin{equation}\label{heat1}
\partial_{t}u_{1}-\partial_{x}^2u_{1}=0.
\end{equation}
Note that the Caffarelli-Kohn-Nirenberg inequality \eqref{CKNbest} implies
\begin{align}
&\|u_{1,0}\|_{L^{p_{1}}}\leq \frac{2}{2\mu-1} \| |x|^{\mu}u_{0}\|_{L^2},\nonumber
\end{align}
where $p_{1}:=2/(2\mu-1)$ fulfills $1\leq p_{1}\leq 2$ due to $1\leq \mu \leq 3/2$. Thence it follows from the well-known $L^2$-$L^{p_{1}}$ decay estimates for the $k$-order derivative of the solution to \eqref{heat1} (cf. \cite{bookgiga}) that
\begin{equation}
    \begin{aligned}
    \|\partial_x^{k} u(t)\|_{L^2}&=\|\partial_x^{k+1} u_{1}(t)\|_{L^2}\\
    &\lesssim t^{-\frac{1}{2}(\frac{1}{p_{1}}-\frac{1}{2})-\frac{1}{2}-\frac{k}{2}}\|u_{1,0}\|_{L^{p_{1}}}\lesssim t^{-\frac{\mu}{2}-\frac{k}{2}}\| |x|^{\mu}u_{0}\|_{L^2}.
    \end{aligned}
\end{equation}

\smallbreak
\noindent $\bullet$  {\textbf{Case 2:} $0< \mu< 1$}. The result from Case 1 gives
\begin{align}
&\|\partial_x^{k} u(t)\|_{L^2}\lesssim t^{-\frac{1}{2}-\frac{k}{2}}\| |x| u_{0}\|_{L^2}.\nonumber
\end{align}
Recall that for the case $\mu=0$, one has
\begin{align}
&\|\partial_x^{k} u(t)\|_{L^2}\leq t^{-\frac{k}{2}}\|u_{0}\|_{L^2}.\nonumber
\end{align}
Therefore, employing the Stein-Wassin interpolation theorem (e.g., \cite[Theorem 5.4.1]{berghinter} or \cite{steinwassin}), we have the time-decay estimate \eqref{heatdecay2} for $0<\mu<1$.

\smallbreak
\noindent $\bullet$  {\textbf{Case 3:}  $\mu> 3/2$}. First, we consider the $\mu$ such that there exists a $i\in \mathbb{N}^*$ satisfying $i\leq \mu\leq 1/2+i$. This implies that 
\[
p_{i}:=2/(2(\mu-i+1)-1) \in [1,2].
\]
Then, similarly to Case 1, we note that $u_{i}:=\int^{x}_{-\infty}\int^{x_{i-1}}_{-\infty}\cdot\cdot\cdot \int^{x_{1}}_{-\infty} u(y,t)\,dydx_{1}...d x_{i-1}$ satisfies 
\[
\partial_x^{i}u_{i}=u,\quad\quad u_{i}(x,0)=u_{i,0}(x):=\int^{x}_{-\infty}\int^{x_{i-1}}_{-\infty}\cdot\cdot\cdot \int^{x_{1}}_{-\infty} u_{0}(y,t)\,dydx_{1}...d x_{i-1},
\]
and
\begin{equation}\label{heat2}
\partial_{t}u_{i}-\partial_{x}^2u_{i}=0.
\end{equation}
It follows from the Caffarelli-Kohn-Nirenberg inequality \eqref{CKNbest} that
\begin{align}
&\|u_{i,0}\|_{L^{p_{i}}}\lesssim \| |x|^{\mu-(i-1)}u_{i-1,0}\|_{L^2}\lesssim \| |x|^{\mu-(i-2)}u_{i-2,0}\|_{L^2}\cdot\cdot\cdot \lesssim \| |x|^{\mu}u_{0}\|_{L^2}.\nonumber
\end{align}
This, together with classical $L^2$-$L^{p_{i}}$ decay estimates for the $(k+i)$-order derivative of the solution to \eqref{heat2}, yields
\begin{align}
&\|\partial_x^{k} u(t)\|_{L^2}=\|\partial_x^{k+i}u_{i}(t)\|_{L^2}\lesssim t^{-\frac{1}{2}(\frac{1}{p_{i}}-\frac{1}{2})-\frac{i}{2}-\frac{k}{2}}\|u_{i,0}\|_{L^{p_{i}}}\lesssim t^{-\frac{\mu}{2}-\frac{k}{2}}\| |x|^{\mu}u_{0}\|_{L^2}, \nonumber
\end{align}
For the complementary case where there exists a $i\in \mathbb{N}^*$ such that $i-1/2< \mu<i$,  we get the desired decay estimates by using a similar interpolation argument as in Case 2. The details are omitted. 
% where, since $i-1/2< \mu<i$,  \eqref{heatdecay2} by a similar interpolation argument as in Case 2.

\end{proof}
%------
% Insert acknowledgments and information
% regarding funding at the end of the last
% section, i.e., right before the bibliography.
%------
% The authors are indebted to the anonymous referees for their valuable suggestions and comments on the manuscript.
\section*{Acknowledgments}
 T. Crin-Barat and E. Zuazua have been funded by the Alexander von Humboldt-Professorship program and the Transregio 154 Project “Mathematical Modelling, Simulation and Optimization Using the Example of Gas Networks” of the DFG. E. Zuazua has been funded by the ModConFlex Marie Curie Action, HORIZON-MSCA-2021-DN-01, the COST Action MAT-DYN-NET, grants PID2020-112617GB-C22 and TED2021-131390B-I00 of MINECO (Spain), and by the Madrid Goverment -- UAM Agreement for the Excellence of the University Research Staff in the context of the V PRICIT (Regional Programme of Research and Technological Innovation). L.-Y. Shou is supported by the National Natural Science Foundation of China (12301275) and the China Postdoctoral Science Foundation  (2023M741694). Part of this work was done while L.-Y. Shou was visiting the Chair for Dynamics, Control, Machine Learning and Numerics and the Research Center Mathematics of Data (MoD) of the Friedrich-Alexander-Universit\"at Erlangen-N\"urnberg (FAU). L.-Y. Shou is grateful to Prof. E. Zuazua and Dr. T. Crin-Barat for their kind hospitality.

%------
% Insert the bibliography.
%------

% \begin{thebibliography}{99}

\bibliographystyle{abbrv} 

\bibliography{AIHP_V1}

\begin{thebibliography}{10}

\bibitem{Arnold}
V.~Arnold.
\newblock {\em Mathematical methods of classical mechanics}.
\newblock Springer-Verlag, 1989.

\bibitem{baejin}
H.-O. Bae and B.~Jin.
\newblock Temporal and spatial decays for the {N}avier–{S}tokes equations.
\newblock {\em Proc. Roy. Soc. Edinburgh Sect. A}, 135, 461-477, 2005.

\bibitem{HJR}
H.~Bahouri, J.-Y. Chemin, and R.~Danchin.
\newblock {\em Fourier Analysis and Nonlinear Partial Differential Equations},
  volume 343 of {\em Grundlehren der Mathematischen Wissenschaften}.
\newblock Springer, Heidelberg, 2011.

\bibitem{BZ}
K.~Beauchard and E.~Zuazua.
\newblock Large time asymptotics for partially dissipative hyperbolic systems.
\newblock {\em Arch. Ration. Mech. Anal.}, 199:177–227, 2011.

\bibitem{berghinter}
J.~Beegh and J.~L\"ofstr\"om.
\newblock {\em Interpolation Spaces. An Introduction}, volume 223 of {\em
  Grundlehren der Mathematischen Wissenschaften}.
\newblock Springer, Berlin-New York, 1976.

\bibitem{BHN}
S.~Bianchini, B.~Hanouzet, and R.~Natalini.
\newblock Asymptotic behavior of smooth solutions for partially dissipative
  hyperbolic systems with a convex entropy.
\newblock {\em Comm. Pure Appl. Math.}, 60, 1559-1622, 2007.

\bibitem{CKN}
L.~Caffarelli, R.~Kohn, and L.~Nirenberg.
\newblock First order interpolation inequality with weights.
\newblock {\em Compos. Math.}, 53, 259-275, 1984.

\bibitem{CW}
F.~Catrina and Z.-Q. Wang.
\newblock On the \textsc{C}affarelli‐\textsc{K}ohn‐\textsc{N}irenberg
  inequalities: Sharp constants, existence (and nonexistence), and symmetry of
  extremal functions.
\newblock {\em Comm. Pure Appl. Math.}, 54(2), 0226-0258, 2001.

\bibitem{CoulombelGoudon}
J.-F. Coulombel and T.~Goudon.
\newblock The strong relaxation limit of the multidimensional isothermal
  {E}uler equations.
\newblock {\em Trans. Amer. Math. Soc.}, 359(2):637–648, 2007.

\bibitem{CBD3}
T.~Crin-Barat and R.~Danchin.
\newblock Global existence for partially dissipative hyperbolic systems in the
  $\textsc{L}^p$ framework, and relaxation limit.
\newblock {\em Math. Ann.}, 2022.

\bibitem{CBD2}
T.~Crin-Barat and R.~Danchin.
\newblock Partially dissipative hyperbolic systems in the critical regularity
  setting : {T}he multi-dimensional case.
\newblock {\em J. Math. Pures Appl. (9)}, 165:1--41, 2022.

\bibitem{Dafermos1}
C.~M. Dafermos.
\newblock {\em Hyperbolic Conservation Laws in Continuum Physics, 3rd edition}.
\newblock Grundlehren Math. Wiss., vol. 325, Springer, Heidelberg, Dordrecht,
  London and New York,, 2010.

\bibitem{Handbook}
R.~Danchin.
\newblock Fourier analysis methods for the compressible {N}avier-{S}tokes
  equations.
\newblock {\em in : Giga Y., Novotný A. (eds) Handbook of Mathematical
  Analysis in Mechanics of Viscous Fluids. Springer, Cham}, 2018.

\bibitem{daoulatli1}
M.~Daoulatli.
\newblock Energy decay rates for solutions of the wave equations with nonlinear
  damping in exterior domain.
\newblock {\em J. Differential Equations}, 264, 4260-4302, 2018.

\bibitem{EggerGiesselmann}
H.~Egger and J.~Giesselmann.
\newblock Stability and asymptotic analysis for instationary gas transport via
  relative energy estimates.
\newblock {\em Numerische Mathematik}, 2023.

\bibitem{bookgiga}
M.-H. Giga, Y.~Giga, and J.~Saal.
\newblock {\em Nonlinear Partial Differential Equations: Asymptotic Behavior of
  Solutions and Self-Similar Solutions}, volume~79 of {\em Grundlehren der
  Mathematischen Wissenschaften}.
\newblock Birkhäuser Boston, Boston, 2010.

\bibitem{HarauxZuazua}
A.~Haraux and E.~Zuazua.
\newblock Decay estimates for some semilinear damped hyperbolic problems.
\newblock {\em Arch. Ration. Mech. Anal.}, 100(2), 191-206, 1988.

\bibitem{HsiaoLiu92CMP}
L.~Hsiao and T.-P. Liu.
\newblock Convergence to nonlinear diffusion waves for solutions of a system of
  hyperbolic conservation laws with damping.
\newblock {\em Comm. Math. Phys.}, 143:599--605, 1992.

\bibitem{Herau}
F.~Hérau.
\newblock Short and long time behavior of the {F}okker-{P}lanck equation in a
  confining potential and application.
\newblock {\em J. Func. Anal.}, 244:95--118, 2007.

\bibitem{HerauNier}
F.~Hérau and F.~Nier.
\newblock Isotropic hypoellipticity and trend to equilibrium for the
  {F}okker-{P}lanck equation with a high-degree potential.
\newblock {\em Arch. Ration. Mech. Anal.}, 171:151--218, 2004.

\bibitem{Ikehata1D}
R.~Ikehata.
\newblock Improved decay rates for solutions to one-dimensional linear and
  semilinear dissipative wave equations in all space.
\newblock {\em J. Math. Anal. Appl.}, 277, 555–570, 2002.

\bibitem{JinXin}
S.~Jin and Z.~Xin.
\newblock The relaxation schemes for systems of conservation laws in arbitrary
  space dimensions.
\newblock {\em Commun. Pure Appl. Math.}, 48, 235-276, 1995.

\bibitem{Kalman}
R.~E. Kalman, P.~L. Falb, and M.~A. Arbib.
\newblock {\em Topics in Mathematical Control Theory}, volume New York-Toronto,
  Ont.-London.
\newblock McGraw-Hill Book Co, 1969.

\bibitem{Katolocal}
T.~Kato.
\newblock The {C}auchy problem for quasi-linear symmetric hyperbolic systems.
\newblock {\em Arch. Ration. Mech. Anal.}, 58:181--205, 1975.

\bibitem{katocom}
T.~Kato and G.~Ponce.
\newblock Commutator estimates and the euler and navier‐stokes equations.
\newblock {\em Comm. Pure App. Math.}, 41(7):891–907, 1988.

\bibitem{Kawashimadoctoral}
S.~Kawashima.
\newblock Systems of a hyperbolic–parabolic composite type, with applications
  to the equations of magnetohydrodynamics.
\newblock {\em Doctoral Thesis, Kyoto University, .
  http://repository.kulib.kyoto-u.ac.jp/dspace/handle/2433/97887}, 1983.

\bibitem{KYDecay}
S.~Kawashima and W.-A. Yong.
\newblock Decay estimates for hyperbolic balance laws.
\newblock {\em J. Anal. Appl.}, 28, 1–33, 2009.

\bibitem{kukavica}
I.~Kukavica.
\newblock Space-time decay for solutions of the {N}avier–{S}tokes equations.
\newblock {\em Indiana Univ. Math. J.}, 50, 205-222, 2001.

\bibitem{Yong0}
C.~Lattanzio and W.-A. Yong.
\newblock Hyperbolic-parabolic singular limits for first-order nonlinear
  systems.
\newblock {\em Comm. Partial Differential Equations}, 26(5$\&$6), 939–964,
  2001.

\bibitem{Lax1}
P.~Lax.
\newblock Hyperbolic systems of conservation laws {II}.
\newblock {\em Commun. Pure Appl. Math.}, (4) 14:537--566, 1957.

\bibitem{LeugeringMophou}
G.~Leugering and G.~Mophou.
\newblock Instantaneous optimal control of friction dominated flow in a
  gas-network.
\newblock {\em International Series of Numerical Mathematics, vol 169.
  Birkhäuser, Cham.}, 2018.

\bibitem{LiTT}
T.-T. Li.
\newblock {\em Global classical solutions for quasilinear hyperbolic systems}.
\newblock Masson, Paris; John Wiley \& Sons, Ltd., Chichester, 1994.

\bibitem{Liu87CMP}
T.-P. Liu.
\newblock Hyperbolic conservation laws with relaxation.
\newblock {\em Comm. Math. Phys.}, 108:153--175, 2000.

\bibitem{Majdalocal}
A.~Majda.
\newblock Compressible fluid flow and systems of conservation laws in several
  space variables.
\newblock {\em Appl. Math. Sci., vol. 53, Springer-Verlag, Berlin, New York},
  1984.

\bibitem{mar1}
P.~Marcati and B.~Rubino.
\newblock Hyperbolic to parabolic relaxation theory for quasilinear first order
  systems.
\newblock {\em J. Differential Equations}, 162:359–399, 2000.

\bibitem{MatsumuraNishida}
A.~Matsumura and T.~Nishida.
\newblock The initial value problem for the equations of motion of viscous and
  heat-conductive gases.
\newblock {\em J. Math. Kyoto Univ.}, 20, 67-104, 1980.

\bibitem{MochizukiMotai}
K.~Mochizuki and T.~Motai.
\newblock On energy decay-nondecay problems for wave equations with nonlinear
  dissipative term in {$\mathbb{R}^{N}$}.
\newblock {\em J. Math. Soc. Japan}, 47 (3), 405-421, 1995.

\bibitem{nakao}
M.~Nakao.
\newblock Decay of solutions to the {C}auchy problem for the klein–gordon
  equation with a localized nonlinear dissipation.
\newblock {\em Hokkaido Math.}, 27, 245-271, 2017.

\bibitem{NakaoJung}
M.~Nakao and I.~H. Jung.
\newblock Energy decay for the wave equation in exterior domains with some
  half-linear dissipation.
\newblock {\em Diff. Integral Equ.}, 16(8), 927-948, 2003.

\bibitem{one1}
K.~Ono.
\newblock The time decay to the cauchy problem for semilinear dissipative wave
  equations.
\newblock {\em Adv. Math. Sci. Appl.}, 9, 243-262, 1999.

\bibitem{PorrettaZuazua2016}
A.~Porretta and E.~Zuazua.
\newblock Numerical hypocoercivity for the kolmogorov equation.
\newblock {\em Math. Comp.}, 86 (303), 2017, 97-119, 2016.

\bibitem{Serre}
D.~Serre.
\newblock {\em Systèmes de lois de conservation, tome 1}.
\newblock Diderot editeur, Arts et Sciences, Paris, New-York, Amsterdam, 1996.

\bibitem{SK}
S.~Shizuta and S.~Kawashima.
\newblock Systems of equations of hyperbolic-parabolic type with applications
  to the discrete {B}oltzmann equation.
\newblock {\em Hokkaido Math. J.}, 14, 249-275, 1985.

\bibitem{steinwassin}
E.~M. Stein and G.~Weiss.
\newblock Interpolation of operators with change of measures.
\newblock {\em Trans. Amer. Math. Soc.}, 87:159--172, 1958.

\bibitem{Todorova1}
G.~Todorova and B.~Yordanov.
\newblock The energy decay problem for wave equations with nonlinear
  dissipative terms in {$\mathbb{R}^{n}$}.
\newblock {\em Indiana Univ. Math. J}, 56(1), 389-416, 2007.

\bibitem{Vazquez}
J.~Vazquez.
\newblock {\em The Porous Medium Equation: Mathematical Theory}.
\newblock Clarendon Press, 2007.

\bibitem{vicenti1}
W.~Vicenti and C.~Kruger.
\newblock {\em Introduction to physical gas dynamics}.
\newblock Melbourne: Robert E. Krieger, 1982.

\bibitem{Villani}
C.~Villani.
\newblock {\em Hypocoercivity}.
\newblock Mem. Am. Math. Soc., 2010.

\bibitem{Wakasa}
K.~Wakasa and B.~Yordanov.
\newblock On the energy decay for dissipative nonlinear wave equations in one
  space dimension.
\newblock {\em J. Math. Anal. Appl.}, 455 (2), 1317-1322, 2017.

\bibitem{XK1}
J.~Xu and S.~Kawashima.
\newblock Global classical solutions for partially dissipative hyperbolic
  system of balance laws.
\newblock {\em Arch. Ration. Mech. Anal.}, 211, 513–553, 2014.

\bibitem{XK2}
J.~Xu and S.~Kawashima.
\newblock The optimal decay estimates on the framework of {B}esov spaces for
  generally dissipative systems.
\newblock {\em Arch. Ration. Mech. Anal.}, 218, 275–315, 2015.

\bibitem{XK1D}
J.~Xu and S.~Kawashima.
\newblock Frequency-localization duhamel principle and its application to the
  optimal decay of dissipative systems in low dimensions.
\newblock {\em J. Differential Equations}, 261, 2670-2701, 2016.

\bibitem{XuWang}
J.~Xu and Z.~Wang.
\newblock Relaxation limit in besov spaces for compressible {E}uler equations.
\newblock {\em J. Math. Pures Appl. (9)}, 99:43--61, 2013.

\bibitem{Yong}
W.-A. Yong.
\newblock Entropy and global existence for hyperbolic balance laws.
\newblock {\em Arch. Ration. Mech. Anal.}, 172, 47–266, 2004.

\bibitem{yoshikomiyakawa}
F.~Yoshiko and T.~Miyakawa.
\newblock Asymptotic profiles of nonstationary incompressible {N}avier-{S}tokes
  flows in the whole space.
\newblock {\em Indiana Univ. Math. J.}, 33(3), 523-54, 2001.

\bibitem{zeng1}
Y.~Zeng.
\newblock Gas dynamics in thermal nonequilibrium and general hyperbolic systems
  with relaxation.
\newblock {\em Arch. Ration. Mech. Anal.}, 150, 225-279, 2004.

\bibitem{ZuazuaX}
E.~Zuazua.
\newblock Stability and decay for a class of nonlinear hyperbolic problems.
\newblock {\em Asymptot. Anal.}, 1(2):161--185, 1988.

\bibitem{ZuazuaX2}
E.~Zuazua.
\newblock Exponential decay for the semilinear wave equation with localized
  damping in unbounded domains.
\newblock {\em J. Math. Pures Appl. (9)}, 70:513--529, 1991.

\end{thebibliography}
\vfill 

%------ Example for a paper in journal:
% \bibitem{article1}
% A.~Petrunin, Parallel transportation for Alexandrov space with curvature bounded below.
% \emph{Geom. Funct. Anal.} \textbf{8} (1998), no.~1, 123--148
% \Zbl{0903.53045} \MR{1601854}

%------ Example for a book:
% \bibitem{book1}
% W.~P. Ziemer, \emph{Weakly differentiable functions}.
% Grad. Texts in Math. 120,  Springer, New York, 1989
%\Zbl{0692.46022} \MR{1014685}

%------ Example for a paper in a book:
% \bibitem{incollection1}
% J.~S. Milne, Introduction to Shimura varieties.
% In \emph{Harmonic analysis, the trace formula, and Shimura varieties},
% pp. 265--378, Clay Math. Proc. 4,
% American Mathematical Society, Providence, RI, 2005
% \Zbl{1148.14011} \MR{2192012}

%------ Example for a preprint on arXiv:
% \bibitem{preprint1}
% D.~V. Nguyen, S.~K. Chilappagari, M.~W. Marcellin, and B.~Vasic,
% LDPC codes from latin squares free of small trapping sets.
% 2010, \arxiv{1008.4177}

%------ Example for a report:
% \bibitem{report1}
% J.~Schöberl, Commuting quasi-interpolation operators.
% Technical report isc-01-10-math, Texas A\&M University, 2001,
% \url{www.isc.tamu.edu/publications-reports/tr/0110.pdf}

%------ Example for a thesis:
% \bibitem{thesis1}
% E.~Giorgi, \emph{The geometric universe}.
% Ph.D. thesis, University of Maryland, College Park, 2002

% \end{thebibliography}

\end{document}